\newtheorem{theorem}{Theorem}[]
\newtheorem*{theorem*}{Theorem}
\newtheorem{corollary}[theorem]{Corollary}
\newtheorem{lemma}[theorem]{Lemma}
\newtheorem{proposition}[theorem]{Proposition}
\newtheorem*{claim*}{Claim}
\theoremstyle{definition}
\newtheorem{definition}[theorem]{Definition}
\newtheorem*{definition*}{Definition}
\theoremstyle{AppDefinition}
\theoremstyle{AppClaim}
\theoremstyle{remark}
\newtheorem{remark}[theorem]{Remark}
\newtheorem{example}[theorem]{Example}
\newtheorem*{example*}{Example}
\def\beginmat{ \left( \begin{array} }
\def\endmat{ \end{array} \right) }
\newcommand*{\op}{%
  \DOTSB
  \mathop{\vphantom{\bigoplus}\mathpalette\matt@op\relax}%
  \slimits@
}
\newcommand\matt@op[2]{%
  \vcenter{\m@th\hbox{\resizebox{\widthof{$#1\bigoplus$}}{!}{$\boxplus$}}}%
}
\renewcommand{\v}{{\bf{v}}}
\newcommand{\x}{{\bf{x}}}
\newcommand{\y}{{\bf{y}}}
\newcommand{\one}{{\bf{1}}}
\def\R{{\mathbb R}}
\newcommand{\dtr}{d_{\mathrm{tr}}}
\newcommand{\T}{\intercal}
\newcommand{\argmin}{\text{argmin}}
\def\@biblabel#1{}
\@citea\NAT@hyper@{%
     \NAT@nmfmt{\NAT@nm}%
     \hyper@natlinkbreak{\NAT@aysep\NAT@spacechar}{\@citeb\@extra@b@citeb}%
     \NAT@date}}
\@citea\NAT@nmfmt{\NAT@nm}%
\NAT@spacechar\NAT@hyper@{\NAT@date}}{}{}
\@citea\NAT@hyper@{%
     \NAT@nmfmt{\NAT@nm}%
     \hyper@natlinkbreak{\NAT@spacechar\NAT@@open\if*#1*\else#1\NAT@spacechar\fi}%
       {\@citeb\@extra@b@citeb}%
     \NAT@date}}
\@citea\NAT@nmfmt{\NAT@nm}%
\fi\NAT@hyper@{\NAT@date}}
\begin{document}
\def\spacingset#1{\renewcommand{\baselinestretch}%
{#1}\small\normalsize} \spacingset{1}
\begin{flushleft}
{\Large{\textbf{Tropical Optimal Transport and Wasserstein Distances}}}
\newline
\\
Wonjun Lee$^{1}$, Wuchen Li$^{2}$, Bo Lin$^{3}$, and Anthea Monod$^{4,\dagger}$
\\
\bigskip
\bf{1} Department of Mathematics, University of California, Los Angeles, CA, USA
\\
\bf{2} Department of Mathematics, University of South Carolina, Columbia, SC, USA
\\
\bf{3} School of Mathematics, Georgia Institute of Technology, Atlanta, GA, USA
\\
\bf{4} Department of Mathematics, Imperial College London, UK 
\\
\bigskip
$\dagger$ Corresponding e-mail: a.monod@imperial.ac.uk
\end{flushleft}


\section*{Abstract}
We study the problem of optimal transport in tropical geometry and define the Wasserstein-$p$ distances in the continuous metric measure space setting of the tropical projective torus.  We specify the tropical metric---a combinatorial metric that has been used to study of the tropical geometric space of phylogenetic trees---as the ground metric and study the cases of $p=1,2$ in detail.  The case of $p=1$ gives an efficient computation of the infinitely-many geodesics on the tropical projective torus, while the case of $p=2$ gives a form for Fr\'{e}chet means and a general inner product structure.  Our results also provide theoretical foundations for geometric insight a statistical framework in a tropical geometric setting.  We construct explicit algorithms for the computation of the tropical Wasserstein-1 and 2 distances and prove their convergence.  Our results provide the first study of the Wasserstein distances and optimal transport in tropical geometry.  Several numerical examples are provided. 


\paragraph{Keywords:} Optimal transport; Tropical geometry; Tropical metric; Tropical projective torus; Wasserstein distances


\section{Introduction}
\label{sec:intro}

In algebraic geometry, the geometry of zero sets of systems of polynomials---known as algebraic varieties---are studied using commutative algebra.  {\em Tropical geometry} is a variant of this field where the polynomials are defined by the tropical algebra: the tropical sum of two elements is their maximum and the tropical product is their usual sum.  Mathematical objects such as functions and curves evaluated under the tropical algebra are piecewise linear structures, and tropical varieties are polyhedral complexes.  Tropical geometry is an important tool for the study of classical algebraic varieties due to many theoretical coincidences between the two settings.  In addition, tropical geometry possesses the advantage of computational tractability and efficiency, as well as connections to other applied sciences.  For example, it has been used in optimization theory \citep{richter2005first}, dynamic programming in computer science \citep{maclagan2015introduction}, as well as in economics and game theory \citep{lin2017two}.  An application of tropical geometry that has gained much interest is the tropical geometric representation of the space of phylogenetic trees.  In particular, there has very recently been active work in using tropical geometry as a data analytic tool for sets of phylogenetic trees \citep{lin2018tropical, yoshida2019tropical, 10.1093/bioinformatics/btaa564, tang2020tropical}.  In this paper, we study the tropical projective torus, which is the ambient space of phylogenetic trees, and build upon it to provide a set of tools for statistical, probabilistic, and geometric studies using optimal transport theory.

Optimal transport theory arises from a question posed in economics, and specifically, in the allocation of resources.  It deals with optimizing transport modes when geographically displacing resources.  Its mathematical formulation was established in the 18th century and has been well-studied since, resulting in strong connections and mutual implications between the domains of dynamical systems and geometry.  It has also provided important results in applications and computational fields, such as computer science.  An important concept arising from optimal transport is the {\em Wasserstein distances}, which are metrics on probability distributions.  Intuitively, they measure the effort required to recover the probability mass of one distribution in terms of an efficient reconfiguration of the other.  As such, Wasserstein distances broaden the scope of optimal transport theory to probability theory.  Additionally, they have been exploited to move further beyond these realms to solve concrete problems in inferential statistics, such as in \cite{panaretos2020invitation}.  Establishing Wasserstein distances in tropical geometric settings thus provides a framework for a vast body of existing results in these related fields to be applicable to the important problem of statistical inference and data analysis in applied tropical geometric settings by providing a setting for the study of probability measures and distributions.  Additionally, it provides an alternative mechanism to study geometric aspects of tropical objects and spaces.

Connecting algebraic theory to optimal transport theory is a new direction of research with very recent contributions involving algebraic geometry and algebraic topology.  In \cite{otvariety}, the Wasserstein distance between a probability distribution and an algebraic variety is minimized via transportation polytopes.  In topological data analysis, where algebraic topology is leveraged to reduce the dimensionality of complex data spaces and extract shape features within the data, optimal transport theory has improved computational efficiency \citep{lacombe2018large} and also has been used to study geometric aspects of algebraic topological invariants \citep{divol2019understanding}.  A prior transportation problem (distinct from the optimal transport setting) has been previously considered in tropical geometry by \cite{richter2005first}.  Our work in this paper presents the first connection between tropical geometry and optimal transport theory.  Specifically, we consider an infinite metric measure space in a continuous tropical geometric setting endowed with a combinatorial ground metric.  Numerical computations of optimal transport with various ground metrics has been recently studied in the continuous setting and shown to be efficient \citep{benamou2016numerical, Li2018}.  Additionally, studying the optimal transport problem provides a computational framework for the probability density space, which also encodes the geometry of sample space \citep{Lafferty, OV, otto, villani2008optimal}.  In solving the optimal transport problem, we thus define tropical Wasserstein distances and provide algorithms for our proposed tropical Wasserstein distances.  Collectively, these results offer tools for probabilistic, statistical, and geometric inference in a tropical geometric setting, which then may be translated to other applications where tropical geometry plays an important computational and interpretive role.

The remainder of this paper is organized as follows.  Section \ref{sec:trop} gives an overview of tropical geometry and the {\em tropical projective torus} as our ground space of interest.  We present and review properties of the tropical metric, which endows this space with a metric structure; we also give some variational forms for the tropical metric.  Section \ref{sec:opt} overviews the problem of optimal transport and the role of the Wasserstein distances in this framework.  We then define the tropical Wasserstein-$p$ distance, with the tropical metric as the ground metric and the tropical projective torus as the ground space; we also give variational forms of the tropical Wasserstein distance.  We study the specific cases of $p=1$ and $2$: the $p=1$ case gives a method for computing all infinitely many tropical geodesics, while in the case of $p=2$, the Wasserstein metric is amenable to statistical analysis by providing an inner product structure on probability measures on the tropical projective torus.  
Section \ref{sec:computation} gives algorithms to explicitly compute the tropical Wasserstein-$p$ distances, while Section \ref{sec:num_exp} presents the results of several numerical experiments implementing our proposed algorithms.  We close the paper with a discussion in Section \ref{sec:discussion} on future research stemming from the work presented in this paper.


\section{Tropical Geometry, the Tropical Projective Torus,\\ and the Tropical Metric}
\label{sec:trop}

In this section, we give the basics of tropical geometry that are relevant for our work.  We then present the tropical projective torus as our ground space of interest, and the tropical metric as the ground metric on this space.  We also give alternative versions of the metric in terms of variational forms.  This is the metric with respect to which we will define the tropical optimal transport problem and the tropical Wasserstein-$p$ distances.

\subsection{Essentials of Tropical Geometry: Tropical Algebra}

Tropical geometry may be seen as a subdiscipline of algebraic geometry.  In the latter, the zero sets of systems of polynomial equations are studied using algebraic methods; in the former, these polynomials are defined via the {\em tropical semiring}, $(\R \cup \{-\infty \}, \boxplus, \odot)$ where addition between two elements is given by their max and multiplication is given by their sum:
\begin{align*}
a \boxplus b & := \max(a,b),\\
a \odot b & := a + b.
\end{align*}
Notice that tropical subtraction is not defined, therefore resulting in a semiring, rather than a ring.  Both operations of the semiring are commutative and associative; multiplication distributes over addition.  {\em Tropicalization} refers to interpreting classical arithmetic operations with their tropical counterparts.  Using these operations, lines, polynomials, and other more general mathematical constructions can be built, which will result in ``skeletal" piecewise linear structures.

\subsection{The Tropical Projective Torus}
\label{subsec:trop_proj}

Tropical geometry naturally gives rise to polyhedral structures.  The interplay between algebraic geometry and polyhedral geometry results in new interpretations of important concepts which form the building blocks for the study of tropical geometry.  

An important example is the reinterpretation of a fundamental object in computational algebraic geometry---the Gr\"{o}bner basis.  A Gr\"{o}bner basis is a particular generating set of an ideal in a polynomial ring over a field; computing Gr\"{o}bner bases is one of the main approaches in solving systems of polynomials, which is a central problem in algebraic geometry.  Reinterpreting Gr\"{o}bner bases using valuations (functions over fields that give a notion of its size) gives rise to {\em Gr\"{o}bner complexes}.  Gr\"{o}bner complexes lead to universal Gr\"{o}bner bases, which are analogs to tropical bases; see \cite{maclagan2015introduction} for full details of this construction.  The Gr\"{o}bner complex is thus a fundamental object in tropical geometry; it is a polyhedral complex constructed for a homogeneous ideal in the polynomial ring $K[x_0, x_1, \ldots, x_n]$ over a field $K$.  The ambient space of a Gr\"{o}bner complex is the {\em tropical projective torus}, denoted by $\R^{n+1}/\R\one$.  In this paper, we consider the tropical projective torus as our ground space of interest.  


The tropical projective torus is the quotient space that identifies vectors differing from each other by tropical scalar multiplication (or classical addition).  It is generated by the following equivalence relation $\sim$ on $\R^{n+1}$:
\[x\sim y \Leftrightarrow x_{1} - y_{1} = x_{2} - y_{2} = \cdots = x_{n+1} - y_{n+1}. \]
Mathematically, $\R^{n+1}/\R\one$ is constructed in the same manner as the complex torus: take a lattice $\Lambda \in \mathbb{C}^{n+1}$ as a real vector space, then the complex torus is $\mathbb{C}^{n+1}/\Lambda$. For $x\in \R^{n+1}$, let $\bar{x}$ be its image in $\R^{n+1}/\R\one$.  The tropical projective torus identifies with $\R^n$ by taking representatives of the equivalence classes whose last coordinate is zero:
\begin{equation}
\label{eq:embedding}
\bar{x} \mapsto (x_{1} - x_{n+1}, \, x_{2} - x_{n+1}, \ldots,\, x_{n} - x_{n+1}).
\end{equation}
We denote an element in $\R^{n+1}$ by $x$, an element in $\R^{n+1}/\R\one$ by $\bar x$, and an element in $\R^n$ by $\x = (x_1 - x_{n+1},\, \ldots,\, x_n - x_{n+1})$---which is the image of $\bar{x}$ in $\R^n$.

\paragraph{The Space of Phylogenetic Trees.}

One important practical example that arises in the tropical projective torus is the space of phylogenetic trees, $\mathcal{T}_N$ (where $N$ is the fixed number of leaves in a tree).  \cite{Speyer2004} identify an equivalence between the space of all phylogenetic trees and a tropical geometric space via a homeomorphism \citep{maclagan2015introduction, lin2018tropical, doi:10.1137/16M1079841}.  The space of phylogenetic trees is contained within the tropical projective torus.  In other words, the tropical projective torus is also the ambient space of phylogenetic trees.  Although the space of phylogenetic trees is a proper subset of the tropical projective torus, it possesses a very complex structure that is not yet well understood.  In particular, it is connected and possesses a polyhedral structure, but is not convex \citep{lin2018tropical, doi:10.1137/16M1079841}.  Additionally, trees are defined by a specific combinatorial condition, which makes the precise characterization of the space of phylogenetic trees and establishing its boundary within the tropical projective torus difficult.  The dimension of tree space also is lower than the tropical projective torus: its dimension grows linearly in the number of leaves in a tree, while for the tropical projective torus, the dimension grows quadratically.

\subsection{The Tropical Metric}
\label{subsec:trop_metric}

The tropical projective torus $\R^{n+1}/\R\one$ becomes a metric space when endowed with a {\em generalized Hilbert projective metric} function \citep{COHEN2004395, AKIAN20113261}, which is a combinatorial metric that is tropical in nature.  It has been referred to as the {\em tropical metric} in recent literature \citep{doi:10.1137/16M1079841, lin2018tropical}.  Our work here is based on the ambient tree space given by the tropical projective torus endowed with the tropical metric.

\begin{definition}
\label{def:trop_metric}
For a point $x \in \R^{n+1}$, denote its coordinates by $x_1, x_2, \ldots, x_{n+1}$ and its representation in the tropical projective torus $\R^{n+1}/\R\one$ by $\bar{x}$.  The {\em tropical metric} on $\R^{n+1}/\R\one$ is given by
\begin{align*}
d_{\mathrm{tr}}(\bar{x}, \bar{y}) & := \max_{1\leq i\leq n+1}(x_{i} - y_{i}) - \min_{1 \leq i\leq n+1}(x_{i} - y_{i}).
\end{align*}
When considering the representatives of the equivalence classes as in (\ref{eq:embedding}), the tropical metric translates to the following between $\R^{n+1}/\R\one$ and $\R^n$: for $\bar{x}, \bar{y} \in \R^{n+1}/\R\one$ and $\x, \y \in \R^n$, 
$$
\dtr(\bar{x}, \bar{y}) := \max\Big\{\max_{1\leq i < j \leq n}\big|(\x_i - \y_i)-(\x_j - \y_j)\big|,\, \max_{1\leq i \leq n}|\x_i - \y_i|\Big\} := \dtr(\x, \y).
$$
\end{definition}

Figure \ref{fig:iso} illustrates the relationship where $\R^{n+1}$ identifies with $\R^{n+1}/\R\one$ by the equivalence relation $\sim$; $\R^{n+1}/\R\one$ then embeds into $\R^{n}$.  The metric $\dtr$ is defined on $\R^{n+1}/\R\one$ and has a representation in $\R^{n}$; it is an isometry from $\R^{n+1}$ to $\R^{n+1}/\one$ to $\R^{n}$.  Again, recall the notation that an element in $\R^{n+1}$ is denoted by $x$, an element in $\R^{n+1}/\R\one$ is denoted by $\bar x$, and an element in $\R^n$ is denoted by $\x = (x_1 - x_{n+1}, \ldots, x_n - x_{n+1})$.

\begin{lemma}
On $\R^{n+1}/\R\one$, we have the following alternate expression for the tropical metric:
	\begin{equation*}
		d_{\mathrm{tr}}(\bar{x}, \bar{y}) = \max_{1\le i\le j\le n+1}{\left| (x_{i} - y_{i}) - (x_{j} - y_{j}) \right|}.
	\end{equation*}
\end{lemma}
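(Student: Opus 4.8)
The plan is to reduce both sides to elementary statements about the finite list of real numbers $z_i := x_i - y_i$, $1 \le i \le n+1$. First I would record that both the defining expression $\max_i z_i - \min_i z_i$ and the claimed expression $\max_{i \le j}|z_i - z_j|$ descend to well-defined quantities on $\R^{n+1}/\R\one$: replacing $x$ by a different representative of $\bar x$ (and likewise $y$) shifts every $z_i$ by a common constant, so all pairwise differences $z_i - z_j$, and in particular $\max_i z_i - \min_i z_i$, are unchanged. Hence it is legitimate to work with fixed representatives $x, y$.

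Next, since $|z_i - z_j| = |z_j - z_i|$ and $|z_i - z_i| = 0$, the restriction $1 \le i \le j \le n+1$ in the maximum is immaterial: $\max_{1 \le i \le j \le n+1}|z_i - z_j| = \max_{1 \le i, j \le n+1}|z_i - z_j|$. So the content of the lemma is the purely elementary fact that the range $\max_k z_k - \min_k z_k$ of a finite list of reals equals its diameter $\max_{i,j}|z_i - z_j|$. For the inequality $\ge$, choose indices $p, q$ with $z_p = \max_k z_k$ and $z_q = \min_k z_k$; then $\max_k z_k - \min_k z_k = z_p - z_q = |z_p - z_q| \le \max_{i,j}|z_i - z_j|$. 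For the inequality $\le$, note that for any $i, j$ we have $z_i \le \max_k z_k$ and $z_j \ge \min_k z_k$, so $z_i - z_j \le \max_k z_k - \min_k z_k$; by symmetry in $i$ and $j$ the same bound holds for $z_j - z_i$, hence $|z_i - z_j| \le \max_k z_k - \min_k z_k$, and taking the maximum over $i, j$ gives $\max_{i,j}|z_i - z_j| \le \max_k z_k - \min_k z_k$. Combining the two inequalities, and translating back via $\dtr(\bar x, \bar y) = \max_k z_k - \min_k z_k$, yields the asserted identity.

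I do not expect a genuine obstacle here; the only points to keep straight are that the one-sided estimate $z_i - z_j \le \max_k z_k - \min_k z_k$ must be promoted to an absolute-value estimate by symmetry, and that shrinking the index range to $i \le j$ leaves the maximum of $|z_i - z_j|$ unchanged.
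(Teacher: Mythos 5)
Your proof is correct; the paper states this lemma without proof, treating it as immediate, and your argument (reducing to the elementary fact that the range $\max_k z_k - \min_k z_k$ of the finite list $z_i = x_i - y_i$ equals its diameter $\max_{i,j}|z_i - z_j|$, after noting invariance under the $\R\one$ shift and the irrelevance of the restriction $i \le j$) is exactly the verification the paper implicitly relies on. Nothing further is needed.
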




\begin{proposition}{\cite[Proposition 17]{lin2018tropical}}
	$d_{\mathrm{tr}}(\cdot,\cdot)$ is a well-defined metric function on $\R^{n+1}/\R\one$.
\end{proposition}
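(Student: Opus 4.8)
The plan is to first check that $\dtr$ descends to a well-defined function on pairs of equivalence classes in $\R^{n+1}/\R\one$, and then to verify the four metric axioms---non-negativity, the identity of indiscernibles, symmetry, and the triangle inequality---directly from the formula in Definition~\ref{def:trop_metric}. For well-definedness, I would take alternative representatives $x' = x + s\one$ and $y' = y + t\one$ of $\bar x$ and $\bar y$ (for $s,t \in \R$) and observe that $x'_i - y'_i = (x_i - y_i) + (s - t)$ for every $i$; since adding a common constant to all coordinates shifts both $\max_i(\cdot)$ and $\min_i(\cdot)$ by that same constant, their difference is unaffected, so $\dtr(\bar x, \bar y)$ depends only on the classes $\bar x$ and $\bar y$.

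Writing $z_i := x_i - y_i$, non-negativity is immediate from $\max_i z_i \ge \min_i z_i$. For the identity of indiscernibles, $\dtr(\bar x, \bar y) = 0$ forces $\max_i z_i = \min_i z_i$, i.e.\ all the differences $x_i - y_i$ coincide, which is exactly the relation $x \sim y$ defining the quotient, hence $\bar x = \bar y$; the converse is clear. Symmetry holds because passing from $z$ to $-z$ interchanges the two extrema: $\max_i(-z_i) - \min_i(-z_i) = \max_i z_i - \min_i z_i$.

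The only step requiring a genuine argument is the triangle inequality. Given $\bar x, \bar y, \bar z$, set $a_i := x_i - y_i$ and $b_i := y_i - z_i$, so that $x_i - z_i = a_i + b_i$; then subadditivity of the maximum, $\max_i(a_i + b_i) \le \max_i a_i + \max_i b_i$, together with superadditivity of the minimum, $\min_i(a_i + b_i) \ge \min_i a_i + \min_i b_i$, yields $\dtr(\bar x, \bar z) = \max_i(a_i+b_i) - \min_i(a_i+b_i) \le (\max_i a_i - \min_i a_i) + (\max_i b_i - \min_i b_i) = \dtr(\bar x, \bar y) + \dtr(\bar y, \bar z)$. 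An alternative, more structural route is to use the alternate expression for $\dtr$ established in the preceding Lemma to write $\dtr(\bar x, \bar y) = \|x - y\|$, where $\|z\| := \max_{1 \le i \le j \le n+1} |z_i - z_j|$ is a seminorm on $\R^{n+1}$ whose kernel is exactly $\R\one$; then all the metric axioms follow from the standard fact that a seminorm induces a metric on the quotient by its kernel. I expect this last point---the behaviour of $\max$ and $\min$ under sums---to be the only place where anything more than routine bookkeeping is needed.
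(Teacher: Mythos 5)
Your proof is correct: well-definedness, non-negativity, the identity of indiscernibles, symmetry, and the triangle inequality (via subadditivity of $\max$ and superadditivity of $\min$, or equivalently the observation that $z \mapsto \max_i z_i - \min_i z_i$ is a seminorm on $\R^{n+1}$ with kernel exactly $\R\one$) are all handled properly. The paper itself does not prove this statement but defers to \cite[Proposition 17]{lin2018tropical}, where the argument is essentially the same routine verification you give, so your self-contained write-up is exactly the standard approach.
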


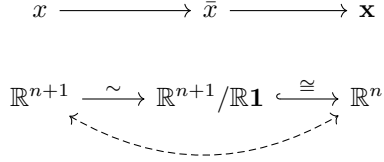
\begin{figure}
$$
\begin{tikzcd}
x \arrow[r] & \bar{x} \arrow[r] & \x \\
\R^{n+1} \arrow[r, "\sim"] \arrow[urrd, bend right, leftrightarrow, dashed] & \R^{n+1}/\R\one \arrow[r, hook, "\cong"] & \R^{n}
\end{tikzcd}
$$
\caption{Diagram illustrating embedding of and relationships between Euclidean spaces and the tropical projective torus.  The dashed arrow represents the isometry of the tropical metric between all three spaces.}
\label{fig:iso}
\end{figure}

\subsection{Variational Forms of the Tropical Metric}
\label{subsec:variational}


It turns out that the tropical metric may be considered in terms of unknown functions and corresponding differential equations, which provides an alternative formulation for the tropical metric in terms of a variational form.  Variational forms are useful in computational studies, since numerically, it is often easier to find solutions to variational problems rather than differential equations.  As we will see further on, this turns out to be an important advantage in explicit computations of the tropical Wasserstein distances and associated results.

\paragraph{Notation.}
We use the $+$ and $-$ superscript notation as follows: 
\begin{align*}
(\cdot)^+ &:= \max(\cdot, 0),\\
(\cdot)^- &:= \min(\cdot, 0).
\end{align*}

\begin{proposition}\label{prop:vf1}
	For $\bar{x}, \bar{y}\in \R^{n+1}/\R\one$, we have
	
	\begin{equation}\label{dist}
	d_{\mathrm{tr}}(\bar{x}, \bar{y}) = \left(
	        \begin{aligned}
	            \text{\rm minimize } \quad & \int_{0}^{1} L_{\mathrm{tr}}\big(\mathbf{v}(t) \big) dt, \\
	            \text{\rm subject to:} \quad & \frac{\textrm{d}{\bf z}}{\textrm{d}t}=\v(t),\,\, {\bf z}(0) = \x,\,\, {\bf z}(1) = \y
	        \end{aligned}
	\right), 
	\end{equation}
	where ${\bf v,z}: [0,1] \to \R^{n}$ and we define the {\em tropical Lagrangian} $L_{\mathrm{tr}}(\cdot)$ as the tropical norm for $\mathbf{a} \in \R^{n}$ as follows:
	\begin{equation}\label{L}
	\begin{split}
	L_{\mathrm{tr}}({\bf a}) = \| {\bf a} \|_{\mathrm{tr}} & = \max\Big(\max_{1\le i\le n}(\mathbf{a}_{i}),0\Big) - \min\Big(\min_{1\le i\le n}(\mathbf{a}_{i}),0\Big)\\
	& = {\max_{1\le i\le n}(\mathbf{a}_{i})}^+ - {\min_{1\le i\le n}(\mathbf{a}_{i})}^- \quad \forall~\mathbf{a} \in \R^{n}.
	\end{split}
	\end{equation}	
\end{proposition}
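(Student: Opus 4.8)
The plan is to read the right-hand side of \eqref{dist} as a shortest-path (geodesic-length) problem on $\R^n$ for the variational integrand $L_{\mathrm{tr}}$, to show that its optimal value is $L_{\mathrm{tr}}(\y-\x)$, and then to check that this number equals $\dtr(\bar x,\bar y)$ via Definition \ref{def:trop_metric}. Concretely, I would establish two inequalities: every admissible curve has action at least $L_{\mathrm{tr}}(\y-\x)$, and the straight segment realizes this value.

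The first ingredient I would record is that $L_{\mathrm{tr}}$ is a norm on $\R^n$ — this is why it is called the tropical norm in the statement — the property actually used being its convexity. Introducing an auxiliary coordinate $\mathbf{a}_{n+1}:=0$, formula \eqref{L} becomes $L_{\mathrm{tr}}(\mathbf{a})=\max\{\mathbf{a}_1,\dots,\mathbf{a}_n,0\}-\min\{\mathbf{a}_1,\dots,\mathbf{a}_n,0\}$, a difference of a convex and a concave function, hence convex; it is also positively $1$-homogeneous, invariant under $\mathbf{a}\mapsto-\mathbf{a}$, and vanishes only at $\mathbf{a}=0$. For the lower bound I would then take any absolutely continuous $\mathbf{z}:[0,1]\to\R^n$ with $\mathbf{z}(0)=\x$, $\mathbf{z}(1)=\y$ and $\v=\dot{\mathbf{z}}\in L^1$, and invoke Jensen's inequality for the convex function $L_{\mathrm{tr}}$ against the probability measure $dt$, followed by the fundamental theorem of calculus:
\[
\int_0^1 L_{\mathrm{tr}}(\v(t))\,dt \;\ge\; L_{\mathrm{tr}}\!\left(\int_0^1 \v(t)\,dt\right) \;=\; L_{\mathrm{tr}}\big(\mathbf{z}(1)-\mathbf{z}(0)\big) \;=\; L_{\mathrm{tr}}(\y-\x).
\]

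For the matching upper bound I would simply exhibit the constant-velocity segment $\mathbf{z}(t)=(1-t)\x+t\y$, $\v(t)\equiv\y-\x$: it is admissible and its action is $L_{\mathrm{tr}}(\y-\x)$, so the infimum in \eqref{dist} equals $L_{\mathrm{tr}}(\y-\x)$ and is attained. It then remains to identify this quantity with $\dtr(\bar x,\bar y)$. Here I would pick the representatives $x,y\in\R^{n+1}$ normalized as in \eqref{eq:embedding}, so $x_{n+1}=y_{n+1}=0$ and $\x_i=x_i$, $\y_i=y_i$ for $i\le n$; then $x_{n+1}-y_{n+1}=0$ plays exactly the role of the auxiliary coordinate introduced above, and Definition \ref{def:trop_metric} gives $\dtr(\bar x,\bar y)=\max_{1\le i\le n+1}(x_i-y_i)-\min_{1\le i\le n+1}(x_i-y_i)=L_{\mathrm{tr}}(\x-\y)=L_{\mathrm{tr}}(\y-\x)$, which combined with the two inequalities finishes the proof.

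The only genuinely delicate point is the admissible class of competitors: the constraint $d\mathbf{z}/dt=\v$ should be read as ``$\mathbf{z}$ absolutely continuous with $\v\in L^1$ its a.e.\ derivative'', which is precisely what makes Jensen's inequality and the fundamental theorem of calculus legitimate; this entails no loss of generality since the optimum is already achieved by an affine, in particular smooth, curve. Everything else is the standard fact that a norm is its own induced geodesic distance on $\R^n$, together with the bookkeeping of the quotient $\R^{n+1}/\R\one$ that converts $L_{\mathrm{tr}}$ on $\R^n$ into $\dtr$ on the tropical projective torus.
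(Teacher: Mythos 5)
Your proof is correct, and it takes a genuinely different route for the key inequality than the paper does. The paper never invokes convexity at this point: it bounds $\|\v(t)\|_{\mathrm{tr}}$ pointwise from below by $|\v(t)_i|$ and by $|\v(t)_i-\v(t)_j|$, integrates, and uses $\left|\int_0^1 f\right|\le\int_0^1|f|$ together with the fundamental theorem of calculus to show the action dominates each of $|\y_i-\x_i|$ and $\left|(\y_i-\x_i)-(\y_j-\x_j)\right|$, hence their maximum, which is $\dtr(\bar x,\bar y)$ by the $\R^n$ form of Definition \ref{def:trop_metric}. You instead establish convexity of $L_{\mathrm{tr}}$ (your one-line argument---append a zero coordinate, convex max minus concave min---is sound and simpler than the case analysis the paper gives later in Lemma \ref{lem:convex}) and apply Jensen's inequality to collapse the infimum to $L_{\mathrm{tr}}(\y-\x)$, which you then identify with $\dtr(\bar x,\bar y)$ via the normalized representatives \eqref{eq:embedding}; the paper makes the same identification at the outset through the set $D=\{\x_i-\y_i\mid 1\le i\le n\}\cup\{0\}$, and both proofs use the identical affine path for the upper bound. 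Your route is more conceptual and generalizes immediately (straight segments minimize the action of any convex Lagrangian, and the same Jensen argument combined with H\"older essentially gives Corollary \ref{prop:vf2}), at the mild cost of relying on a convexity fact the paper only proves afterwards; the paper's route is more elementary and ties the lower bound directly to the combinatorial $\R^n$ expression for $\dtr$. Your remark on the admissible class (absolutely continuous $\mathbf z$ with $\v\in L^1$) is a reasonable precision that the paper leaves implicit, so there is no gap on either side.
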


\begin{proof}
	Let $D = \{\x_{i}-\y_{i}\mid 1\le i\le n\} \cup \{0\}$. By Definition \ref{def:trop_metric}, $d_{\mathrm{tr}}(\bar{x}, \bar{y}) = \max(D) - \min(D)$. Hence
$$
d_{\mathrm{tr}}(\bar{x}, \bar{y}) = 
	d_{\mathrm{tr}}(\bar{y}, \bar{x}) =
	{\max_{1\le i\le n}(\x_{i}-\y_{i})}^+ - {\min_{1\le i\le n}(\x_{i}-\y_{i})}^-.
$$


	First, let ${\bf z}(t) = t\cdot \y + (1-t) \cdot \x$, then $\v(t)$ is the constant vector $\y - \x$, and the integral $\int_{0}^{1}{\| \mathbf{v}(t) \|_{\mathrm{tr}} dt}$ becomes $L_{\mathrm{tr}}(\y - \x) = d_{\mathrm{tr}}(\bar{x},\bar{y})$. Second, in order to show that 
	
	\[\int_{0}^{1}{\| \mathbf{v}(t) \|_{\mathrm{tr}} dt} \ge d_{\mathrm{tr}}(\bar{x},\bar{y}), \]
it suffices to show that the integral is always no less than any of $|\y_{i} - \x_{i}|$ and $\left|\left(\y_{i} - \x_{i}\right) - \left(\y_{j} - \x_{j}\right) \right|$ where $1\le i,j\le n$.
	
	For $1\le i\le n$, by definition of $L_{\mathrm{tr}}$ we have
	
	\[\|\mathbf{v}(t)\|_{\mathrm{tr}} \ge |\v(t)_{i}-0| = |\v(t)_{i}|. \]
	
	Now consider the function $f_{i}: [0,1] \to \R$ given by $f_{i}(t) = \mathbf{z}(t)_{i}$. Then $\displaystyle \v(t)_{i} = \frac{df_{i}}{dt}(t)$, which gives
	\begin{equation}\label{eq:ftc}
		\int_{0}^{1}{\v(t)_{i} dt} = f_{i}(1) - f_{i}(0) = \y_{i} - \x_{i}
	\end{equation}
and
	\[\int_{0}^{1}{\| \mathbf{v}(t) \|_{\mathrm{tr}} dt} \ge \int_{0}^{1}{|\v(t)_{i}| dt} \ge \left|\int_{0}^{1}{\v(t)_{i} dt} \right| = \left|\y_{i} - \x_{i}\right|. \]
	
	Similarly, for any $1\le i,j\le n$, by definition of $L_{\mathrm{tr}}$, we have
	
	\[\| \mathbf{v}(t) \|_{\mathrm{tr}} \ge |\v(t)_{i} - \v(t)_{j}|.\]
	
	By (\ref{eq:ftc}), we get
	
	\begin{align*}
	\int_{0}^{1}{\| \mathbf{v}(t) \|_{\mathrm{tr}} dt} \ge \int_{0}^{1}{|\v(t)_{i} - \v(t)_{j}| dt} & \ge \left|\int_{0}^{1}{\left(\v(t)_{i} - \v(t)_{j}\right) dt}\right|\\
	& = \left|\left(\y_{i} - \x_{i}\right) - \left(\y_{j} - \x_{j}\right)\right|. 
	\end{align*}
\end{proof}

\begin{example}
When $n=2$, 
    \[ 
    L_{\mathrm{tr}}({\bf a}) =
    \begin{cases}
        a_{1}, & \text{ if } a_{1}\ge a_{2}\ge 0; \\
        a_{2}, & \text{ if } a_{2}\ge a_{1}\ge 0; \\
        -a_{1}, & \text{ if } 0 \ge a_{2}\ge a_{1}; \\
        -a_{2}, & \text{ if } 0 \ge a_{1}\ge a_{2}; \\
        a_{1}-a_{2}, & \text{ if } a_{1}\ge 0 \ge a_{2}; \\   
        a_{2}-a_{1}, & \text{ if } a_{2}\ge 0 \ge a_{1}. 
    \end{cases}
    \]
\end{example}

The above variational form (\ref{dist}) of $d_{\mathrm{tr}}(\cdot,\cdot)$ may be further generalized as follows.

\begin{corollary}\label{prop:vf2}
	For $\bar{x}, \bar{y}\in \R^{n+1}/\R\one$, let $L_{\mathrm{tr}}$ be the same as in Proposition \ref{prop:vf1}. For $p>1$, we have
		
		\begin{equation}\label{dist2}
		d_{\mathrm{tr}}(\bar{x}, \bar{y}) = \left(
		        \begin{aligned}
		            \text{\rm minimize } \quad & \left(\int_{0}^{1}{L_{\mathrm{tr}}\big( \mathbf{v}(t) \big)^{p}}dt\right)^{\frac{1}{p}} \, \\
		            \text{\rm subject to:} \quad & \frac{\textrm{d}{\bf z}}{\textrm{d}t}=\v(t),\,\, {\bf z}(0) = \x,\,\, {\bf z}(1) = \y
		        \end{aligned}
		\right).
		\end{equation}
\end{corollary}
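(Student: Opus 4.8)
The plan is to deduce this from Proposition~\ref{prop:vf1} by a single convexity step. For an admissible path $\mathbf{z}\colon[0,1]\to\R^{n}$ (absolutely continuous, with $\dot{\mathbf{z}}=\v$, $\mathbf{z}(0)=\x$, $\mathbf{z}(1)=\y$), write its $L^p$-cost as $C_p(\mathbf{z})=\big(\int_0^1 L_{\mathrm{tr}}(\v(t))^p\,dt\big)^{1/p}$. First I would prove $C_p(\mathbf{z})\ge d_{\mathrm{tr}}(\bar x,\bar y)$ for every such path, and then exhibit a path attaining this bound; together these give \eqref{dist2}.

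For the lower bound, fix an admissible $\mathbf{z}$. Since $[0,1]$ carries a probability measure and $s\mapsto s^p$ is convex on $[0,\infty)$ for $p>1$, Jensen's inequality (equivalently, the monotonicity of $L^q$-norms in $q$ on a probability space) applied to the nonnegative function $t\mapsto L_{\mathrm{tr}}(\v(t))$ gives
\[
\Big(\int_0^1 L_{\mathrm{tr}}(\v(t))^p\,dt\Big)^{1/p}\;\ge\;\int_0^1 L_{\mathrm{tr}}(\v(t))\,dt .
\]
By Proposition~\ref{prop:vf1}---more precisely, by the chain of inequalities in its proof, which shows $\int_0^1 L_{\mathrm{tr}}(\v(t))\,dt \ge d_{\mathrm{tr}}(\bar x,\bar y)$ for \emph{every} admissible path, not merely the optimal one---the right-hand side is at least $d_{\mathrm{tr}}(\bar x,\bar y)$. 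Hence $C_p(\mathbf{z})\ge d_{\mathrm{tr}}(\bar x,\bar y)$. For the matching upper bound, take the constant-speed segment $\mathbf{z}(t)=t\y+(1-t)\x$, so that $\v(t)\equiv\y-\x$; then $L_{\mathrm{tr}}(\v(t))^p$ is constant in $t$ and $C_p(\mathbf{z})=L_{\mathrm{tr}}(\y-\x)=d_{\mathrm{tr}}(\bar x,\bar y)$, using the identity $L_{\mathrm{tr}}(\y-\x)=d_{\mathrm{tr}}(\bar x,\bar y)$ already established in the proof of Proposition~\ref{prop:vf1}. Thus the minimum is exactly $d_{\mathrm{tr}}(\bar x,\bar y)$ and is attained.

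There is no real obstacle here; the content is entirely contained in Proposition~\ref{prop:vf1}, and the only thing to be careful about is that the reduction genuinely needs the \emph{uniform} lower bound $\int_0^1 L_{\mathrm{tr}}(\v(t))\,dt\ge d_{\mathrm{tr}}(\bar x,\bar y)$ over all admissible curves (rather than just the value of the $p=1$ infimum), which is exactly what was proved there via the fundamental theorem of calculus coordinate-by-coordinate. One may additionally remark that equality in the Jensen step forces $L_{\mathrm{tr}}(\v(t))$ to be constant almost everywhere, so that for $p>1$ the minimizers are precisely the constant-speed reparametrizations of tropical geodesics, in contrast to the $p=1$ case where arbitrary monotone reparametrizations are optimal; this is a remark about the minimizer set and is not needed for the identity itself.
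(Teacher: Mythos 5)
Your proposal is correct and follows essentially the same route as the paper: the paper also reduces to the $p=1$ case via H\"older's inequality (your Jensen step is the same estimate, since $\|\cdot\|_{L^p}\ge\|\cdot\|_{L^1}$ on a probability space), invokes the uniform lower bound $\int_0^1 L_{\mathrm{tr}}(\v(t))\,dt\ge d_{\mathrm{tr}}(\bar x,\bar y)$ established in the proof of Proposition \ref{prop:vf1}, and attains equality with the straight-line path $\mathbf{z}(t)=t\y+(1-t)\x$. Your closing remark on constant-speed minimizers is extra but harmless.
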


\begin{proof}
	When ${\bf z}(t) = t\cdot \y +(1-t)\cdot \x$, $\v(t)$ is still the constant $\y- \x$ and the equality still holds. In addition, by the H\"{o}lder inequality, 
	
	\[\left(\int_{0}^{1}{\big\| \mathbf{v}(t) \big\|_{\mathrm{tr}}^{p} dt}\right)^{\frac{1}{p}} \ge \int_{0}^{1}{\big| \| \mathbf{v}(t) \|_{\mathrm{tr}} \big| dt}. \]
	
	Hence for any ${\bf z}:[0,1]\to \R^{n}$ and ${\bf v}(t) = \frac{d{\bf z}}{dt}$,
	
	\[\left(\int_{0}^{1}{\big\| \mathbf{v}(t) \big\|_{\mathrm{tr}}^{p} dt}\right)^{\frac{1}{p}} \ge d_{\mathrm{tr}}(\x,\y),\]
as in Definition \ref{def:trop_metric}.
\end{proof}


\section{Optimal Transport and the Tropical Wasserstein-$p$ Distances}
\label{sec:opt}

We now give a brief background on and a description of the problem of optimal transport; we also formally present the setting of the optimal transport problem specific to our work.

The question underlying the theory of optimal transport can be posed in a very basic and intuitive manner as follows: What is the most efficient way to move a given pile of dirt from one location to another?  The total volume of the dirt must remain intact, but the shape and form of the pile may change during transportation and arrive at its location in a differently shaped pile.  This problem has been recast mathematically in various formulations with various assumptions.  There is a vast literature of historical as well as technical aspects and perspectives on the optimal transport problem; see for example \cite{villani2003topics, villani2008optimal, ambrosio2013user} for detailed discussions.  

\subsection{Optimal Transport and Probability}

Adapting the intuitive description of the optimal transport problem above to a more mathematically formal setting, we may view the pile of dirt as a probability measure to be transported over a space---or alternatively, one probability distribution to be transformed into another---which gives us a probabilistic and statistical perspective on the problem.

A key factor in solving the optimal transport problem is the {\em cost function}, which gives the cost of moving the pile of dirt, or the transporting the probability measure.  Mathematically, this is generally a function of two variables---an origin or ``start" location and destination or ``end" location---which maps to the positive real line to give the cost, and may take into account any number of factors.  In the simplest case, however, when the cost of moving the pile of dirt from its origin to destination is nothing more than the distance between the origin and destination, the solution to the optimal transport problem yields the {\em Wasserstein distance} (for a fixed dimension).  Intuitively, the Wasserstein distance gives the minimum cost of transforming one probability distribution into another.  This minimum cost is simply the ``amount of dirt" to be transported, multiplied by the mean distance it must be moved.  In the case of probability distributions that contain a total mass of 1, the minimum cost is therefore simply the mean distance it must be moved.  More precisely, the Wasserstein distance is a distance function for probability distributions defined on a given metric space, referred to as the ground space and the associated metric is referred to as the ground metric; these concepts are formalized further on in Definition \ref{def:wass}.  The Wasserstein distance is thus a useful tool for comparing distributions.

\paragraph{Specific Setting.}

In our work, the ground space is the tropical projective torus and the ground metric is the tropical metric.  We consider the set of all probability measures on the tropical projective torus, which exist and are well-defined \citep{lin2018tropical}, as a space.  This work defines and constructs Wasserstein distances as a metric on these probability measures associated with the tropical projective torus.  Figure \ref{fig:Wass_fig} provides a conceptual illustration of the relationship between the ground space, equipped with a ground metric, and the Wasserstein space of probability measures over the ground space, equipped with the Wasserstein distance. 

\begin{figure}
\begin{center}
\includegraphics[scale=0.5]{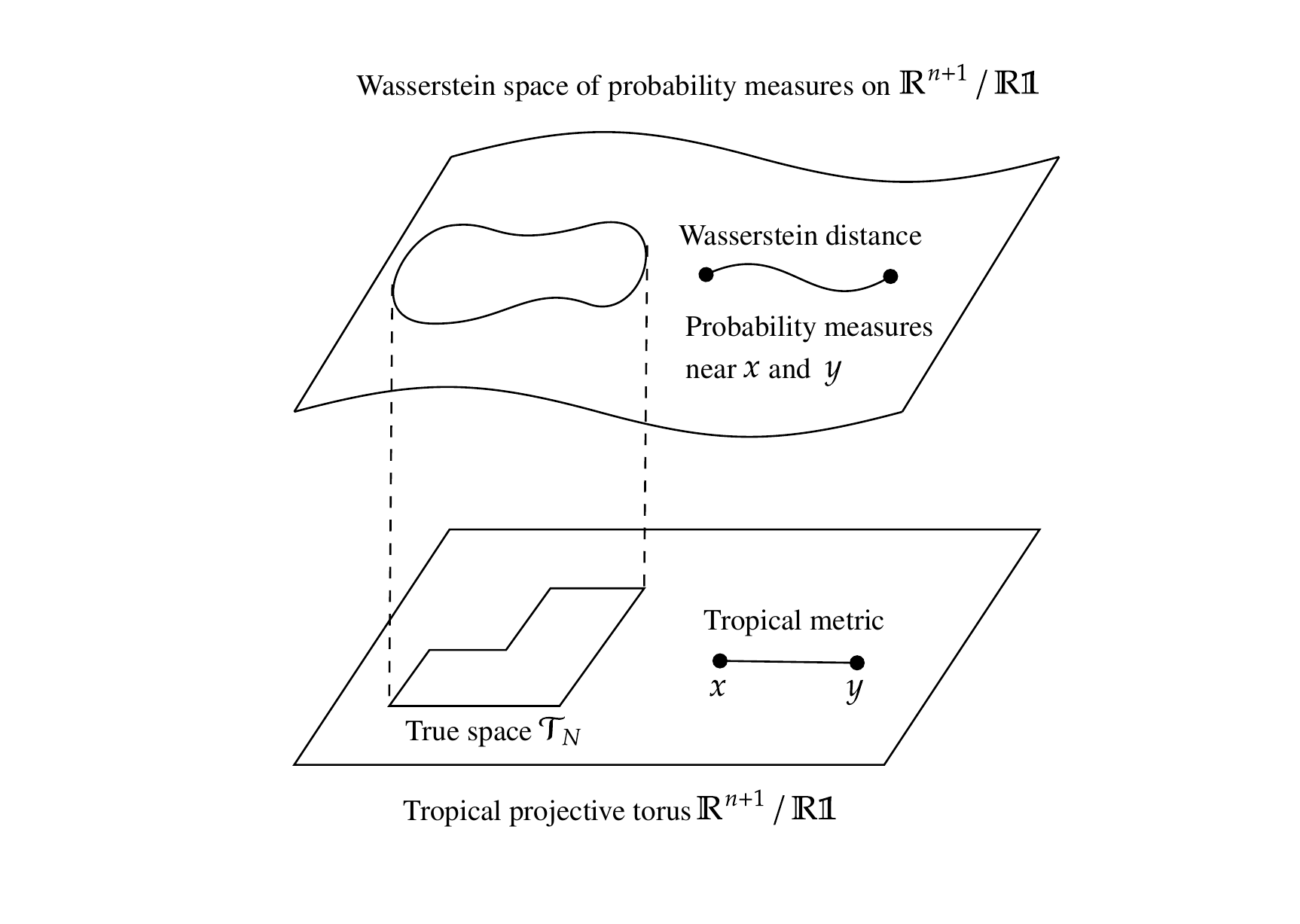}
\caption{Illustrative figure of the relationship between the ground space and the Wasserstein space of probability measures.  Here, the plane below depicts the tropical projective torus $\R^{n+1}/\R\one$ is the ground space; it is equipped with the tropical metric.  This space admits well-defined probability measures \citep{lin2018tropical}.  Collecting these probability measures as a separate space yields the space of probability measures on $\R^{n+1}/\R\one$; in this figure, it is depicted in the manifold above.  This space can be equipped with a particular metric---the Wasserstein distance.  The Wasserstein distance is therefore defined on the space of probability measures on $\R^{n+1}/\R\one$; it measures distances between probability measures on the tropical projective torus.  In this illustrative figure, we also show the space of phylogenetic trees with $N$ leaves, $\mathcal{T}_N$, as a figurative proper non-convex subset of the tropical projective torus $\R^{n+1}/\R\one$.  The probability measures associated with this specific subset of $\R^{n+1}/\R\one$ are depicted in the Wasserstein space of probability measures above, which is also non-convex (see Remark \ref{remark:convex}).}
\label{fig:Wass_fig}
\end{center}
\end{figure}

\paragraph{Wasserstein Distances as Metrics Between Probability Distributions.}

Although other metrics for probability distributions exist in the literature on mathematical statistics, the Wasserstein distance possesses desirable computational and intuitive properties.  To illustrate a few such properties, let us consider random variables $X, Y$ defined on $\R^d$ distributed as $X \sim P$ and $Y \sim Q$ with densities $p$ and $q$, respectively.  Three commonly-used measures for distances between $P$ and $Q$ are total variation, $\frac{1}{2}\int |p-q|$; Hellinger, $\sqrt{\int (\sqrt{p} - \sqrt{q})^2}$; and $L_2$, $\int (p-q)^2$.

When comparing one discrete versus one continuous distribution, these distances yield results that are not very informative.  Let $P$ be uniform on $[0,1]$, and let $Q$ be uniform on $\{0,\, 1/n,\, 2/n,\, \ldots,\, 1\}$.  The total variation distance between these distributions is 1, which is the total size of each of the two sets, and the largest that any distance can be, while the Wasserstein distance is $1/n$.

These distances also do not take into account the underlying geometry of the space on which the distributions are defined.  Consider the three densities $p_1$, $p_2$ and $p_3$ shown in Figure \ref{fig:density_ex}.  We have
$$
\int |p_1 - p_2| = \int |p_1 - p_3| = \int |p_2 - p_3|,
$$
and similar results for the Hellinger and $L_2$ distances, however, intuitively, we would like to think of $p_1$ and $p_2$ being more similar and and hence closer to each other than to $p_3$.  The Wasserstein distance is able to make this distinction.

\begin{figure}
\begin{center}
\includegraphics[scale=0.75]{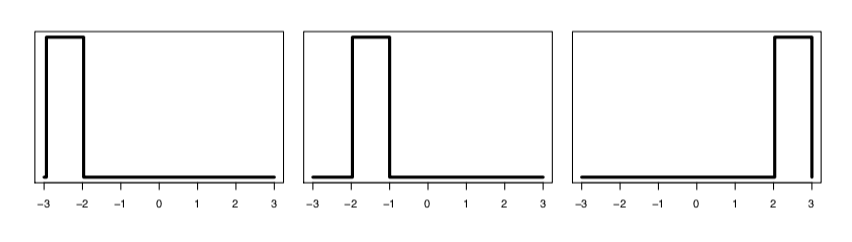}
\caption{Three example densities $p_1$, $p_2$, $p_3$.  This figure appears in \cite{wasserman_opt}.  The total variation, Hellinger, and $L_2$ distances between these three densities are the same, while the Wasserstein distance between $p_1$ and $p_2$ is smaller than that between either $p_1$ or $p_2$ and $p_3$.}
\label{fig:density_ex}
\end{center}
\end{figure}

In computing a distance between distributions, we arrive at some measure of their similarity or dissimilarity, but the total variation, Hellinger, $L_2$, and other distances do not provide any information on how or why the distributions are qualitatively different.  Perhaps the most helpful property of the Wasserstein distance is that, in addition to a measure of distance between the distributions, we also obtain a map that describes how $P$ morphs into $Q$.  This map is known as a {\em transport plan}.

In addition to the illustrative examples discussed above, there are other desirable computational and statistical properties of the Wasserstein distance, such as stability to small perturbations and a well-behaved and intuitive Wasserstein Fr\'{e}chet mean.  Further details and more complete discussions on statistical aspects of the Wasserstein distance can be found in \cite{doi:10.1146/annurev-statistics-030718-104938, wasserman_opt}.  

Aside from statistical aspects, there also exist other analytic advantages of the Wasserstein distances, depending on the context.  For instance, the Wasserstein distances' intimate connection to optimal transport problems inherently make them natural tools in these and other settings with foundations in partial differential equations.  

\paragraph{Wasserstein Distances and Phylogenetic Trees.}

Wasserstein distances have been previously studied in the context of phylogenetic trees.  A single tree itself may be treated as a metric space, for instance, by considering genetic distances which measure distances between pairs of sequences on a single tree; the metric here is defined {\em within} the tree itself.  When considering a single tree, the context is a finite metric space.  Wasserstein distances have been defined and studied in these contexts, such as in \cite{doi:10.1111/j.1467-9868.2011.01018.x}, where probability distributions giving rise to individual trees are compared.  \cite{kloeckner_2015} studies geometric properties of measures on equidistant trees (i.e., rooted trees with equal branch lengths from the root to all leaves) using Wasserstein distances.  For finite spaces, \cite{doi:10.1111/rssb.12236} conduct statistical inference studies for empirical Wasserstein metrics computed from datasets.  Very recently, \cite{NIPS2019_9396} studied the sliced formulation of optimal transport---developed to alleviate computational and statistical drawbacks of optimal transport theory---on tree metrics.  \cite{sato2020fast} furthermore propose an extremely fast algorithm that solves the optimal transport problem to compute Wasserstein distances on a tree with one million nodes in less than one second.  The setting of these works all differ from the study of Wasserstein distances on the {\em space} of phylogenetic trees.

In the context of tree spaces, other probability-based distances between trees have also been proposed \citep{10.1093/sysbio/syx080, garba2020information}.  These are related, but are nevertheless strictly different from the notion of distances between probability measures over tree space.  The contributions of these works are classical measures between probability distributions on genetic sequences that make up trees, which then induce probabilistic distances between trees, including Hellinger distances and Kullback--Leibler divergences.  Kullback--Leibler divergences measure the difference in terms of information gain between models of statistical inference \citep{Kullback:1951aa}.  Outside the scope of interest of this paper, other tree spaces have also been proposed that are not probability-based; an example of a combinatorial construction based on posets that turns out to be related to tree-reconstruction using Markov processes is the edge-product space \citep{MOULTON2004710, GILL2008158}. 

\subsection{Formalizing the Optimal Transport Problem and Defining the Wasserstein-$p$ Distances.}

\cite{monge1781memoire} is largely recognized to have provided the first mathematical formalization of the optimal transport problem described above, while the subsequent probabilistic reinterpretation by \cite{kantorovich1942translocation} lead to a fundamental computational breakthrough that seeded the development of linear optimization.  As such, the statement of the mathematical optimal transport problem is often referred to as the Monge--Kantorovich transport problem and presented in the setting of measure theory.  We now give an overview of this presentation.

\begin{definition}
Let $\Omega$ and $\Omega'$ be separable metric spaces that are Radon spaces (that is, any probability measure on each space is a Radon measure).  Let $c: \Omega \times \Omega' \rightarrow [0, \infty]$ be a Borel-measurable cost function.  For $\rho^0 \in \mathscr{P}(\Omega)$ and $\rho^1 \in \mathscr{P}(\Omega')$ where $\mathscr{P}(\cdot)$ denotes the collection of probability measures on the respective spaces, the {\em Monge--Kantorovich transport problem} is to find a probability measure $\pi$ on $\Omega \times \Omega'$ such that
$$
\inf \Bigg\{ \int_{\Omega \times \Omega'} c(x, y) \mathrm{d}\pi(x,y) \,\, \bigg| \,\, \pi \in \Pi(\rho^0, \rho^1) \Bigg\}
$$
is achieved.  Here, $\Pi(\rho^0, \rho^1)$ denotes the collection of all probability measures on $\Omega \times \Omega'$ with marginal measures $\rho^0$ on $\Omega$ and $\rho^1$ on $\Omega'$.  

\end{definition}

When the cost function is lower semi-continuous, and given that $\Omega$ and $\Omega'$ are Radon spaces, $\Pi(\rho^0, \rho^1)$ is tight, and therefore a solution to the Monge--Kantorovich transport problem always exists under these conditions \citep[e.g.,][]{ambrosio2008gradient}.  From this formulation, the Wasserstein-$p$ distance may be defined as follows.

\begin{definition}
\label{def:wass}
Let $(\Omega,d)$ be a separable metric Radon space.  Let $p \geq 1$ and $\mathscr{P}_p(\Omega)$ be the collection of all probability measures $\mu$ on $\Omega$ such that $\mu$ has finite $p$th moment for some $\x_0 \in \Omega$; i.e., $\displaystyle \int_{\Omega} d(\x, \x_0)^p \mathrm{d}\mu(\x) < +\infty$.  The {\em Wasserstein-$p$ distance} between probability measures $\rho^0, \rho^1 \in \mathscr{P}_p(\Omega)$ is given by
\begin{align*}
W_p & : \mathscr{P}_p(\Omega) \times \mathscr{P}_p(\Omega) \rightarrow [0, +\infty)\\
W_p(\rho^0, \rho^1) & := \Bigg( \inf_{\pi \in \Pi(\rho^0, \rho^1)} \int_{\Omega \times \Omega} d(\x,\y)^p \mathrm{d}\pi(\x,\y) \Bigg)^{1/p},
\end{align*}
where, as before, $\Pi(\rho^0, \rho^1)$ is the collection of all probability measures on $\Omega \times \Omega$ with marginal measures $\rho^0$ and $\rho^1$ on the respective copies of $\Omega$.  Equivalently, we have
$$
W_p(\rho^0, \rho^1)^p = \inf \Big\{ \mathbb{E}\big[d(X, Y)^p \big] \Big\},
$$
where $\mathbb{E}[\cdot]$ denotes the expectation, and the infimum is taken over all joint distributions of random variables $X$ and $Y$ with respective marginals $\rho^0$ and $\rho^1$.  The metric $d$ is referred to as the {\em ground metric}; the function $\pi$ is known as the {\em transport plan}.
\end{definition}

The transport plan $\pi(\x,\y)$ is a function that describes a way to move the measure $\rho^0$ into $\rho^1$, and between locations $\x$ and $\y$; transport plans are not unique.  Since the total mass moved out of a region around $x$ must be equal to $\rho^0(\x)\mathrm{d}\x$ and the total mass moved into a region around $\x$ must be $\rho^1(\x)\mathrm{d}\x$, we have the following restrictions on a transport plan:
\begin{align*}
\int_{\mathbb{R}^n} \pi(\x,\x')d\x' & = \rho^0(\x);\\
\int_{\mathbb{R}^n} \pi(\x,\x')d\x & = \rho^1(\x').
\end{align*}
In other words, $\pi$ is a joint probability distribution with marginals $\rho^0$ and $\rho^1$.  The total infinitesimal mass which moves from $\x$ to $\y$, therefore, is $\pi(\x,\y) \mathrm{d}\x \mathrm{d}\y$ and the cost of moving this amount of mass from $\x$ to $\y$ is $c(\x,\y)\pi(\x,\y)\mathrm{d}\x\mathrm{d}\y$.  The total cost is then
$$
C = \iint c(\x,\y)\pi(\x,\y)d\x d\y = \int c(\x,\y)\mathrm{d}\pi(\x,\y).
$$
The {\em optimal transport plan} is the $\pi$ which achieves the minimal value of $C$:
$$
C^* = \inf_{\pi \in \Pi(\rho^0, \rho^1)} \int c(\x,\y)\mathrm{d}\pi(\x,\y).
$$
If the cost of a move $c(\x,\y)$ is no more than the distance between the two points $d(\x,\y)$, then the optimal cost value $C^*$ is identically the Wasserstein-1 distance, $W_1$. 

\begin{remark}
In the particular case where $p=1$, the Wasserstein-1 distance is also referred to as the {\em Kantorovich--Rubinstein distance}, and the {\em earth mover's distance} (EMD) in the computer science literature.
\end{remark}

\begin{remark}
The Wasserstein distances satisfy all conditions for a formal definition of a metric \citep[e.g.,][]{villani2008optimal}.  If the condition of finite $p$th moment is relaxed, the Wasserstein distances may technically be infinite, and therefore not a metric in the strict sense.
\end{remark}

\begin{remark}
\label{remark:convex}
For any $p \geq 1$, if $(\Omega, d)$ is a complete and separable metric space, then so too is $(\mathscr{P}_p(\Omega), W_p)$ \citep[e.g.,][]{villani2008optimal}.  Other geometric properties between the ground space and its associated Wasserstein distance also hold, including compactness, convexity, as well as non-convexity.  An adaptation of the Brunn--Minkowski theorem \citep{brunn1887ueber, minkowski1896geometrie} relating volumes of compact and convex sets, as well as its generalization to non-convex sets by \cite{lyusternik1935brunn}, for comparative relations between ground and Wasserstein spaces also exists \citep{villani2008optimal}.  The geometric implication of these results is that compact, non-convex subsets of the ground space with respect to the ground metric correspond to non-convex subsets in the Wasserstein space of probability measures (with generalized Ricci curvature bounds) over the ground space with respect to the Wasserstein distance.

In the applicative setting of our work concerning the space of phylogenetic trees as a non-convex subset of the tropical projective torus, the implication is that the corresponding space of probability measures associated with the space of phylogenetic trees is also non-convex with respect to the Wasserstein distances.  (Compactness of tree space can be established by fixing an upper bound on the height of trees.)  This provides a geometric compatibility between the space of phylogenetic trees equipped with the tropical metric and its associated space of probability measures equipped with Wasserstein distances.  See Figure \ref{fig:Wass_fig} for an illustrative description of this relationship.
\end{remark}

\paragraph{A Time-Dependent Cost Function: Formulating a Hamiltonian.}

In formulating the above variational forms of the tropical metric (\ref{dist}) and (\ref{dist2}), the notation with respect to $t$ is not by coincidence and purposely alludes to a dependence upon time.  Within the setting of Wasserstein distances and their relation to the optimal transport problem where the ground metric is itself the cost function, intuitively, a time-dependent ground metric corresponds to a cost function where time is a cost factor.  

Considering time dependence allows for a rich and alternate formulation of the optimal transport problem, which extends to the continuous displacement of measures---precisely the setting of the tropical metric on the tropical projective torus as a continuous metric measure space.  However, there are certain instances where continuous displacement problems turn out to be equivalent to steady-state, time-independent problems with an alternate formulation that favors computational efficiency: this occurs when the Lagrangian $L$ is homogeneous of degree 1 and convex.

\begin{lemma}
\label{lem:convex}
The tropical Lagrangian $L_{\mathrm{tr}}$ defined in (\ref{L}) is convex on $\R^n$.  More specifically, for $\mathbf{a},\mathbf{b}\in \R^{n}$ and $0\le w\le 1$, we have
	\begin{equation}
		(1-w)\|\mathbf{a}\|_{\mathrm{tr}} + w\|\mathbf{b}\|_{\mathrm{tr}} \ge \|(1-w)\mathbf{a}+w\mathbf{b}\|_{\mathrm{tr}}.
	\end{equation}
\end{lemma}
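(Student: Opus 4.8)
The plan is to exhibit $L_{\mathrm{tr}}$ as a pointwise maximum of finitely many linear functionals on $\R^n$, from which convexity is immediate, and then to read off the displayed inequality as the defining property of a convex function.

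First I would rewrite the Lagrangian using an auxiliary coordinate. For $\mathbf{a}=(a_1,\ldots,a_n)\in\R^n$ set $a_0:=0$. Then $\max\big(\max_{1\le i\le n}a_i,0\big)=\max_{0\le i\le n}a_i$ and $\min\big(\min_{1\le i\le n}a_i,0\big)=\min_{0\le i\le n}a_i$, so that
$$
L_{\mathrm{tr}}(\mathbf{a}) \;=\; \max_{0\le i\le n}a_i \;-\; \min_{0\le i\le n}a_i \;=\; \max_{0\le i,j\le n}\big(a_i-a_j\big),
$$
which is exactly the shape of the alternate expression for $d_{\mathrm{tr}}$ recorded in the lemma above. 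Each map $\mathbf{a}\mapsto a_i-a_j$ (with $a_0\equiv 0$) is linear on $\R^n$, and there are only finitely many index pairs $(i,j)$.

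Next I would invoke the standard fact that a pointwise maximum of a finite family of convex functions is convex and that every linear functional is convex; hence $L_{\mathrm{tr}}$ is convex on $\R^n$. To keep the argument self-contained I would instead argue directly: given $\mathbf{a},\mathbf{b}\in\R^n$ and $0\le w\le 1$, choose a pair $(i^*,j^*)$ attaining the maximum defining $L_{\mathrm{tr}}\big((1-w)\mathbf{a}+w\mathbf{b}\big)$. Then, using $a_{i^*}-a_{j^*}\le L_{\mathrm{tr}}(\mathbf{a})$ and $b_{i^*}-b_{j^*}\le L_{\mathrm{tr}}(\mathbf{b})$ together with $1-w\ge 0$ and $w\ge 0$,
$$
L_{\mathrm{tr}}\big((1-w)\mathbf{a}+w\mathbf{b}\big) \;=\; (1-w)\big(a_{i^*}-a_{j^*}\big)+w\big(b_{i^*}-b_{j^*}\big) \;\le\; (1-w)L_{\mathrm{tr}}(\mathbf{a})+wL_{\mathrm{tr}}(\mathbf{b}),
$$
which is precisely the claimed inequality.

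An equivalent route is to write $L_{\mathrm{tr}}(\mathbf{a})=\max(a_1,\ldots,a_n,0)+\max(-a_1,\ldots,-a_n,0)$ via ${c}^+=\max(c,0)$ and $-{c}^-=\max(-c,0)$, displaying $L_{\mathrm{tr}}$ as a sum of two convex functions. There is no substantial obstacle here; the only point needing a little care is the bookkeeping of the $\max/\min$ against $0$ and the sign in the $(\cdot)^-$ term, which the device $a_0:=0$ handles cleanly. (The same representation also shows $L_{\mathrm{tr}}$ is positively homogeneous of degree $1$, as alluded to in the discussion preceding the lemma, since each $a_i-a_j$ is homogeneous of degree $1$.)
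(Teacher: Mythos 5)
Your proof is correct. At its core it is the same elementary argument as the paper's---evaluate the tropical norm of the convex combination at the indices where it is attained, split by linearity, and bound each piece by the corresponding norm---but your packaging is genuinely different and worth noting. The paper argues by a case split on where the value $\|(1-w)\mathbf{a}+w\mathbf{b}\|_{\mathrm{tr}}$ is attained: either it is a difference of two combined coordinates or it is a single combined coordinate, each estimated separately (the symmetric case where the value is the negative of a single coordinate, and the trivial case of value $0$, are left implicit there). Your device $a_0:=0$ instead rewrites $L_{\mathrm{tr}}(\mathbf{a})=\max_{0\le i,j\le n}(a_i-a_j)$, exhibiting the tropical norm as a pointwise maximum of finitely many linear functionals; since the auxiliary coordinate is preserved under convex combinations ($0=(1-w)\cdot 0+w\cdot 0$), all of the paper's cases collapse into the single estimate at a maximizing pair $(i^*,j^*)$. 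What this buys is a shorter, case-free (and thus gap-free) argument, together with positive $1$-homogeneity of $L_{\mathrm{tr}}$ for free, which is exactly the property invoked in the surrounding discussion; your alternative decomposition $L_{\mathrm{tr}}(\mathbf{a})=\max(a_1,\ldots,a_n,0)+\max(-a_1,\ldots,-a_n,0)$ as a sum of two convex functions is also correct and equally serviceable. What the paper's route buys, by contrast, is an explicit hands-on verification directly from the $(\cdot)^+,(\cdot)^-$ form of the definition, which is the form reused later (e.g.\ in the Hamiltonian computation).
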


\begin{proof}
	By definition, 


	\begin{align*}
		\|(1-w)\mathbf{a}+w\mathbf{b}\|_{\mathrm{tr}} & = \max\Big(\max_{1\le i\le n}{\big((1-w)a_{i}+wb_{i}\big)},0\Big)\\
		& - \min\Big(\min_{1\le i\le n}{\big((1-w)a_{i}+wb_{i}\big)},0\Big)\\
		& = {\max_{1\le i\le n}{\big((1-w)a_{i}+wb_{i}\big)}}^+ - {\min_{1\le i\le n}{\big((1-w)a_{i}+wb_{i}\big)}}^-.
	\end{align*}

So either there exist $1\le j,k\le n$ such that 
\[\|(1-w)\mathbf{a}+w\mathbf{b}\|_{\mathrm{tr}} = \big((1-w)a_{j}+wb_{j}\big) - \big((1-w)a_{k}+wb_{k}\big),\] 
or there exists $1\le j\le n$ such that 
\[\|(1-w)\mathbf{a}+w\mathbf{b}\|_{\mathrm{tr}} = (1-w)a_{j}+wb_{j}.\]
Note that
\begin{align*}
	\big((1-w)a_{j}+wb_{j}\big) & - \big((1-w)a_{k}+wb_{k}\big) = (1-w)\left(a_{j} - a_{k}\right) + w\left(b_{j} - b_{k}\right) \\
	&\le (1-w)\Big({\max_{1\le i\le n}{(a_{i})}}^+ - {\min_{1\le i\le n}{(a_{i})}}^-\Big) + w\Big({\max_{1\le i\le n}{(b_{i})}}^+ - {\min_{1\le i\le n}{(b_{i})}}^-\Big)\\
	&=(1-w)\| \mathbf{a} \|_{\mathrm{tr}} + w\|\mathbf{b}\|_{\mathrm{tr}}.
\end{align*}
		

	We also have
	
		\begin{align*}
		(1-w)a_{j} & +wb_{j} \le (1-w)|a_{j}| + w|b_{j}| \\
		& \le (1-w)\Big({\max_{1\le i\le n}{(a_{i})}}^+ - {\min_{1\le i\le n}{(a_{i})}}^-\Big) + w\Big({\max_{1\le i\le n}{(b_{i})}}^+ - {\min_{1\le i\le n}{(b_{i})}}^-\Big) \\
		& =(1-w)\|\mathbf{a}\|_{\mathrm{tr}} + w\|\mathbf{b}\|_{\mathrm{tr}}.
	\end{align*}

	Hence Lemma \ref{lem:convex} holds in either case. 
\end{proof}

\begin{remark}
Note that convexity of $L_{\mathrm{tr}}$ also implies convexity of $\frac{1}{p}L_{\mathrm{tr}}^{p}$.
\end{remark}

The convexity of the tropical Lagrangian $L_{\mathrm{tr}}$ then allows for the formulation of the {\em Hamiltonian} \citep[Example 7.5]{villani2008optimal} for $\mathbf{b} \in \R^{n}$ as follows:
\begin{equation}\label{eq:hp}
\begin{split}
H(\mathbf{b})=&\sup_{\mathbf{a}\in \R^{n}} \left\{\mathbf{a}^{\T} \mathbf{b}- \frac{1}{p} \| \mathbf{a} \|_{\mathrm{tr}}^p \right\}\\
=&\sup_{\mathbf{a}\in \R^{n}} \left\{ \sum_{i=1}^{n}{b_{i} a_{i}}-\frac{1}{p}\Big({\max_{1\le i\le n}(a_{i})}^+ - {\min_{1\le i\le n}(a_{i})}^-\Big)^p\right\}.
\end{split}
\end{equation}


We now explicitly compute the value of the Hamiltonian (\ref{eq:hp}), which will provide concise formulations with regard to the tropical Wasserstein-$p$ distances.  For convenience, and identifying $\R^{n+1}/\R\one$ with $\R^n$, for $\mathbf{b} \in \R^{n}$ we define 
\begin{equation}\label{ms}
\zeta(\mathbf{b}) := \max_{S\subset \{1,2,\ldots,n\}}{\left|\sum_{i\in S}{b_{i}}\right|}.
\end{equation}
In other words, $\zeta(\mathbf{b})$ is the absolute value of the sum of either all positive $b_{i}$ or all negative $b_{i}$. In particular, $\zeta(\mathbf{b})=0$ if and only if $\mathbf{b}=\mathbf{0}$.

\begin{example}\label{remark13}
    When $n=2$, we have $\mathbf{b} = (b_1, b_2)$ and
    \[
    \zeta(\mathbf{b}) = \begin{cases}
    b_{1}+b_{2}, & \text{ if } b_{1}\ge 0, b_{2}\ge 0; \\
    -b_{1}-b_{2}, & \text{ if } b_{1}\le 0, b_{2}\le 0; \\
    b_{1}, & \text{ if } b_{1}\ge -b_{2}\ge 0; \\
    b_{2}, & \text{ if } b_{2}\ge -b_{1}\ge 0; \\
    -b_{1}, & \text{ if } -b_{1}\ge b_{2}\ge 0; \\
    -b_{2}, & \text{ if } -b_{2}\ge b_{1}\ge 0.
    \end{cases}
    \]
\end{example}

\begin{proposition}\label{prop:hp}
	The value of $H(\mathbf{b})$ is:
	\begin{enumerate}[(i)]
		\item $0$ when $\mathbf{b}=\mathbf{0}$, or $\zeta(\mathbf{b})\le 1$ and $p=1$;
		\item $\infty$ when $\mathbf{b}\ne \mathbf{0}$ and $p<1$, or $\zeta(\mathbf{b})>1$ and $p=1$;
		\item $\displaystyle \frac{p-1}{p}\zeta(\mathbf{b})^{\frac{p}{p-1}}$ when $\mathbf{b}\ne \mathbf{0}$ and $p>1$.
	\end{enumerate}
\end{proposition}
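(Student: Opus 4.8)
The plan is to read $H(\mathbf{b})$ in \eqref{eq:hp} as the Legendre transform of $\mathbf{a}\mapsto\frac1p\|\mathbf{a}\|_{\mathrm{tr}}^p$ and to evaluate it by splitting the supremum over $\mathbf{a}$ into a supremum over directions followed by a supremum over the radius. Since $\|\cdot\|_{\mathrm{tr}}$ is a genuine (positive-definite) norm on $\R^n\cong\R^{n+1}/\R\one$, every $\mathbf{a}$ factors as $t\,\mathbf{a}_0$ with $t=\|\mathbf{a}\|_{\mathrm{tr}}\ge0$ and $\|\mathbf{a}_0\|_{\mathrm{tr}}=1$, so $H(\mathbf{b})=\sup_{t\ge0}\big(t\,D(\mathbf{b})-\frac1p t^p\big)$, where $D(\mathbf{b}):=\sup\{\mathbf{a}^\T\mathbf{b}\,:\,\|\mathbf{a}\|_{\mathrm{tr}}\le1\}$ is the dual norm. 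The crux is therefore to identify $D(\mathbf{b})$ with $\zeta(\mathbf{b})$ from \eqref{ms}, after which everything reduces to a one-variable optimization.

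To show $D(\mathbf{b})=\zeta(\mathbf{b})$, I would prove both inequalities. For $D(\mathbf{b})\ge\zeta(\mathbf{b})$: if $\mathbf{b}\ne\mathbf{0}$ pick a nonempty $S^\ast\subseteq\{1,\dots,n\}$ attaining the maximum in \eqref{ms}, set $\mathbf{a}=\operatorname{sign}\!\big(\sum_{i\in S^\ast}b_i\big)\cdot\mathbf{1}_{S^\ast}$, and check that $\|\mathbf{a}\|_{\mathrm{tr}}=1$ while $\mathbf{a}^\T\mathbf{b}=\big|\sum_{i\in S^\ast}b_i\big|=\zeta(\mathbf{b})$; the case $\mathbf{b}=\mathbf{0}$ is trivial. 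For $D(\mathbf{b})\le\zeta(\mathbf{b})$: given $\|\mathbf{a}\|_{\mathrm{tr}}\le1$, put $u=\max(\max_i a_i,0)\ge0$ and $\ell=-\min(\min_i a_i,0)\ge0$, so that $u+\ell=\|\mathbf{a}\|_{\mathrm{tr}}\le1$ and $-\ell\le a_i\le u$ for every $i$. Estimating coordinatewise — $a_ib_i\le u\,b_i$ when $b_i>0$ and $a_ib_i\le\ell\,|b_i|$ when $b_i<0$ — gives $\mathbf{a}^\T\mathbf{b}\le u\sum_{b_i>0}b_i+\ell\sum_{b_i<0}|b_i|$, and since $\sum_{b_i>0}b_i$ and $\sum_{b_i<0}|b_i|$ are both of the form $\big|\sum_{i\in S}b_i\big|$, each is $\le\zeta(\mathbf{b})$; hence $\mathbf{a}^\T\mathbf{b}\le(u+\ell)\zeta(\mathbf{b})\le\zeta(\mathbf{b})$.

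With $H(\mathbf{b})=\sup_{t\ge0}g(t)$ for $g(t)=t\,\zeta(\mathbf{b})-\frac1p t^p$, the cases follow by elementary analysis. If $\mathbf{b}=\mathbf{0}$ then $\zeta(\mathbf{b})=0$, $g(t)=-\frac1p t^p\le0$, and the supremum is $0$ at $t=0$. Suppose $\mathbf{b}\ne\mathbf{0}$, so $\zeta(\mathbf{b})>0$. For $p>1$, $g$ is concave on $[0,\infty)$ with a unique critical point $t^\ast=\zeta(\mathbf{b})^{1/(p-1)}>0$, and substituting yields $g(t^\ast)=\frac{p-1}{p}\zeta(\mathbf{b})^{p/(p-1)}$, which is (iii). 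For $p=1$, $g(t)=t\big(\zeta(\mathbf{b})-1\big)$, so the supremum is $0$ if $\zeta(\mathbf{b})\le1$ and $+\infty$ if $\zeta(\mathbf{b})>1$, which together with the $\mathbf{b}=\mathbf{0}$ case gives the $p=1$ clauses of (i) and (ii). For $p<1$, $t^p=o(t)$ as $t\to\infty$, so $g(t)\to+\infty$ and the supremum is $+\infty$, giving the remaining clause of (ii).

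I expect the only genuinely delicate point to be the upper bound $\mathbf{a}^\T\mathbf{b}\le\zeta(\mathbf{b})$ for $\|\mathbf{a}\|_{\mathrm{tr}}\le1$ — i.e.\ verifying that $\zeta$ is exactly the dual tropical norm; the remaining steps are the standard computation of the Legendre transform of $t\mapsto\frac1p t^p$ plus bookkeeping over the ranges of $p$ and the possibility $\mathbf{b}=\mathbf{0}$. An alternative route to the dual-norm identity is to show that the unit ball of $\|\cdot\|_{\mathrm{tr}}$ is the polytope with vertex set $\{\pm\mathbf{1}_S:\emptyset\ne S\subseteq\{1,\dots,n\}\}$ and maximize the linear functional $\mathbf{b}^\T(\cdot)$ over these vertices; the coordinatewise estimate above has the advantage of not requiring a proof of that vertex description.
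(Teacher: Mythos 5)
Your proof is correct and follows essentially the same route as the paper's: your coordinatewise bound $\mathbf{a}^\T\mathbf{b}\le(u+\ell)\,\zeta(\mathbf{b})$ and your indicator-vector maximizer supported on $S^\ast$ are exactly the paper's estimates (the paper's $u,v$ satisfy $u-v=\|\mathbf{a}\|_{\mathrm{tr}}$, matching your $u+\ell$), and your one-variable optimization over $t=\|\mathbf{a}\|_{\mathrm{tr}}$ is the paper's optimization over $s=u-v$. Packaging this as ``identify $\zeta$ as the dual tropical norm, then take the Legendre transform of $t\mapsto\tfrac{1}{p}t^{p}$'' is a cleaner framing, but the underlying computation is the same.
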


\begin{proof}

\begin{enumerate}[(i)]

\item When $\mathbf{b}=\mathbf{0}$, $\sum_{i=1}^{n}{b_{i} a_{i}}$ is always zero, and $L_{\mathrm{tr}}(\mathbf{a})\ge 0$, so $H(\mathbf{b})\le 0$.  However, when $\mathbf{a}=\mathbf{0}$, the right-hand side of (\ref{eq:hp}) is zero, so $H(\mathbf{0})=0$. When $\zeta(\mathbf{b})\le 1$ and $p=1$, let
	\[u := \max_{1\le i\le n}(a_{i},0) \ge 0 \mbox{~~~and~~~} v := \min_{1\le i\le n}(a_{i},0) \le 0.\]	
Then we have
	\begin{align*}
		\sum_{i=1}^{n}{b_{i} a_{i}} &= \sum_{b_{i}>0}{b_{i} a_{i}} + \sum_{b_{i}<0}{b_{i} a_{i}} \\
		&\le \sum_{b_{i}>0}{b_{i} u} + \sum_{b_{i}<0}{b_{i} v} \\
		&\le \zeta(\mathbf{b})u + \zeta(\mathbf{b})(-v) \\
		&= \zeta(\mathbf{b})(u-v)\le u-v.
	\end{align*}
Hence $H(\mathbf{b})\le 0$, and equality holds when $\mathbf{a}=\mathbf{0}$. So $H(\mathbf{b})=0$.\\

\item	Now we may assume that $\mathbf{b}\ne \mathbf{0}$ and thus $\zeta(\mathbf{b})>0$.  We may choose nonempty $S\subset \{1,2,\ldots,n\}$ such that
	\[\zeta(\mathbf{b}) = \left| \sum_{j\in S}{b_{j}} \right|. \]
	For any $N>0$ and each $1\le i\le n$, we let
	
	\[ a_{i} = \begin{cases}
	\displaystyle \frac{b_{i}}{|b_{i}|}\cdot N, &\text{ if } i\in S; \\
	0, &\text{ if } i\notin S.
	\end{cases} \] 
	Then $\sum_{i=1}^{n}{b_{i} a_{i}} = \zeta(\mathbf{b})\cdot N$ and the set $\{a_{i}\mid 1\le i\le n\}\cup \{0\}$ is either $\{0, N\}$ or $\{0, -N\}$, so 
	$L_{\mathrm{tr}}(\mathbf{a})$ is $N - 0$ or $0 - (-N)$, which is $N$.  Since $\zeta(\mathbf{b})>0$, when $p<1$, or $\zeta(\mathbf{b})>1$ and $p=1$, we have
	\[\lim\limits_{N\to \infty}{\left(\zeta(\mathbf{b})N - \frac{1}{p}N^{p}\right)} = \infty. \]
	So $H(\mathbf{b})=\infty$. \\

\item We denote $u,v$ as in (i) above. Then

	\[H(\mathbf{b}) \le \zeta(\mathbf{b})(u-v) - \frac{1}{p}(u-v)^{p}. \]
	
	Let $s:=u-v\ge 0$.  We need to find the maximum of $\zeta(\mathbf{b})s-\frac{1}{p}s^{p}$ when $s\ge 0$. The derivative of this function of $s$ is
	
	\[\zeta(\mathbf{b}) - s^{p-1}. \]
	
	Hence the function is increasing when $0\le s\le \zeta(\mathbf{b})^{\frac{1}{p-1}}$, and it is decreasing when $s\ge \zeta(\mathbf{b})^{\frac{1}{p-1}}$. So the maximum is attained when $s=\zeta(\mathbf{b})^{\frac{1}{p-1}}$, thus
	
	\[H(\mathbf{b})\le \zeta(\mathbf{b})\cdot \zeta(\mathbf{b})^{\frac{1}{p-1}} - \frac{1}{p}\zeta(\mathbf{b})^{\frac{p}{p-1}} = \frac{p-1}{p}\zeta(\mathbf{b})^{\frac{p}{p-1}}. \]
	
	Finally, as in (ii), we may choose nonempty $S\subset \{1,2,\ldots,n\}$ such that
		\[\zeta(\mathbf{b}) = \left| \sum_{j\in S}{b_{j}} \right|, \]
	and the equality holds when
	\begin{equation}\label{Hab}
	a_{i} = 
	\begin{cases}
	\displaystyle \frac{b_{i}}{|b_{i}|}\cdot \zeta(\mathbf{b})^{\frac{1}{p-1}}, &\text{ if } i\in S; \\
	0, &\text{ if } i\notin S.
	\end{cases}
	\end{equation}
	
\end{enumerate}
\end{proof}

For notational convenience, 
we also define $\eta \colon \mathbb{R}^n\rightarrow \mathbb{R}^n$, where $\eta(\mathbf{b})=(\eta(\mathbf{b})_i)_{i=1}^n$, with $$\eta(\mathbf{b})_i:=a_i=	\begin{cases}
	\displaystyle \frac{b_{i}}{|b_{i}|}\cdot \zeta(\mathbf{b})^{\frac{1}{p-1}}, &\text{ if } i\in S; \\
	0, &\text{ if } i\notin S.
	\end{cases}$$ 
That is, $\eta(\mathbf{b})_i$ is defined by (\ref{Hab}).

\paragraph{The Tropical Wasserstein-$p$ Distances.}
We consider the tropical projective torus as a probability space \citep{lin2018tropical} with finite $p$th moment as follows:
$$
\mathscr{P}_p(\R^{n}) = \Big\{\rho\in L^1(\mathbb{R}^{n})~\colon \int_{\mathbb{R}^{n}}\rho(\x)^p d\x=1,~\rho\geq 0\Big\}.
$$
Within the optimal transport framework discussed above and as in Definition \ref{def:wass}, the {tropical Wasserstein-$p$ distance} is given as follows:
	\begin{align}\label{lp}
	\tilde{W}^{\mathrm{tr}}_p & : \mathscr{P}_p(\mathbb{R}^n) \times \mathscr{P}_p(\mathbb{R}^n) \rightarrow [0, +\infty) \nonumber\\
	\tilde{W}^{\mathrm{tr}}_{p}(\rho^0, \rho^1)^{p} & := \inf_{\pi\in \Pi(\rho^0, \rho^1)} \int_{\mathbb{R}^n\, \times \, \mathbb{R}^{n}} d_{\mathrm{tr}}(\x,\y)^p\mathrm{d}\pi(\x,\y),
	\end{align}
	where the infimum is taken over the set of all possible joint distributions (transport plans) $\pi$ with marginals $\rho^0$ and $\rho^1$, $\Pi(\rho^0, \rho^1)$.  Here, the distance $\tilde{W}^{\mathrm{tr}}_p$ depends the choice of $p$ in the linear programming formulation \eqref{lp}.  The following alternative gives an equivalent definition of the {tropical Wasserstein-$p$ distances}.  

\begin{definition}[Tropical Wasserstein-$p$ distance]
The {\em tropical Wasserstein-$p$ distance} is given by 
\begin{subequations}\label{UOTT}
\begin{equation}\label{UOT}
W^{\mathrm{tr}}_p(\rho^0,\rho^1)^p=\inf_{\mathbf{v},\rho}\int_{0}^1\int_{\mathbb{R}^n} \big\| \mathbf{v}(t,\x) \big\|_{\mathrm{tr}}^p\,\rho(t,\x)d\x dt
\end{equation} 
such that the following dynamical constraint or {\em continuity equations} hold:
\begin{equation}\label{UC}
\begin{split}
 \partial_t\rho(t,\x)+\nabla\cdot\big(\rho(t,\x)\mathbf{v}(t,\x) \big)& =0,\\
 \rho(0,\x) & =\rho^0(\x),\\
 \rho(1,\x) & =\rho^1(\x).
\end{split}
\end{equation}
\end{subequations}
Here $\|\cdot\|_{\mathrm{tr}}$ is the tropical norm, $\rho^0$, $\rho^1\in \mathscr{P}_p(\mathbb{R}^n)$, $\nabla$, $\nabla\cdot$ are gradient and divergence operators in $\mathbb{R}^n$, and the infimum is taken over all continuous density functions $\rho\colon [0,1]\times \mathbb{R}^n\rightarrow \mathbb{R}$, and Borel vector fields $\mathbf{v}\colon [0,1]\times\mathbb{R}^n \rightarrow \mathbb{R}^n$. 
\end{definition}
Here, the formulation \eqref{UOTT} given by the pairs \eqref{UOT} and \eqref{UC} is known as the {\em Benamou--Brenier formula}, given by \cite{BB}.
As discussed in Chapter 8 of \cite{villani2003topics}, when $c$ satisfies suitable conditions, the linear programming formulation $\tilde{W}_p^{\textrm{tr}}$ is equivalent to the dynamical formulation $W_p^{\textrm{tr}}$.  In this work, we focus on the dynamical formulation \eqref{UOTT} with $p=1,2$ for their concrete implications on computations of the tropical projective torus. 

\subsection{The Tropical Wasserstein-1 Distance}

We first study the case $p=1$.  In this case, it turns out that the tropical Wasserstein-1 distance $W_1^{\mathrm{tr}}$ may be recast as the following minimization problem.


\begin{proposition}[Minimal Flux Formulation]
\label{prop:min_flux}
By identifying $\R^{n+1}/\R\one$ with $\R^n$ as discussed in Section \ref{subsec:trop_metric}, the tropical Wasserstein-1 distance satisfies 
\begin{equation}\label{W1}
\begin{split}
W^{\mathrm{tr}}_1(\rho^0,\rho^1)=\inf_{\mathbf{m}} \bigg\{\int_{\mathbb{R}^n} \big\|\mathbf{m}(\x) \big\|_{\mathrm{tr}} d\x \, \colon \rho^1(\x)-\rho^0(\x)+\nabla\cdot \mathbf{m}(\x)=0\bigg\},
\end{split}
\end{equation}
where the infimum is taken over all Borel flux functions $\mathbf{m} \colon \mathbb{R}^n\rightarrow \mathbb{R}^n$. 
\end{proposition}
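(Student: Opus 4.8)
The plan is to prove two inequalities between $W_1^{\mathrm{tr}}(\rho^0,\rho^1)$ as defined by the Benamou--Brenier formulation \eqref{UOTT} with $p=1$ and the right-hand side of \eqref{W1}, which I will temporarily denote $B(\rho^0,\rho^1)$. The structural fact driving everything is that the tropical norm $\|\cdot\|_{\mathrm{tr}}=L_{\mathrm{tr}}(\cdot)$ is positively $1$-homogeneous and convex (Lemma \ref{lem:convex}), hence sublinear; this degree-$1$ homogeneity is precisely what allows the time variable in \eqref{UOT}--\eqref{UC} to be integrated out, collapsing the dynamic problem to the static minimal-flux problem.

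For $W_1^{\mathrm{tr}}\ge B$, I would take an arbitrary admissible pair $(\rho,\mathbf{v})$ for \eqref{UOTT} and set $\mathbf{m}(\x):=\int_0^1\rho(t,\x)\mathbf{v}(t,\x)\,dt$. Integrating the continuity equation \eqref{UC} in $t$ over $[0,1]$ gives, distributionally, $\rho^1(\x)-\rho^0(\x)+\nabla\cdot\mathbf{m}(\x)=0$, so $\mathbf{m}$ is feasible for \eqref{W1}. For the objective, fix $\x$ and apply Jensen's inequality for the convex, $1$-homogeneous map $\|\cdot\|_{\mathrm{tr}}$ against the positive measure $\rho(t,\x)\,dt$:
\[
\int_0^1\|\mathbf{v}(t,\x)\|_{\mathrm{tr}}\,\rho(t,\x)\,dt=\int_0^1\|\rho(t,\x)\mathbf{v}(t,\x)\|_{\mathrm{tr}}\,dt\ge\Big\|\int_0^1\rho(t,\x)\mathbf{v}(t,\x)\,dt\Big\|_{\mathrm{tr}}=\|\mathbf{m}(\x)\|_{\mathrm{tr}},
\]
where the first equality uses $\rho\ge0$ and $1$-homogeneity and the inequality is the integral form of subadditivity. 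Integrating in $\x$ (Fubini) and taking the infimum over feasible $\mathbf{m}$ gives the claim.

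For $W_1^{\mathrm{tr}}\le B$, I would take any feasible $\mathbf{m}$ for \eqref{W1} and produce a competitor for \eqref{UOTT} by the linear-interpolation ansatz $\rho(t,\x):=(1-t)\rho^0(\x)+t\rho^1(\x)$ and $\mathbf{v}(t,\x):=\mathbf{m}(\x)/\rho(t,\x)$ (set to $0$ where $\rho$ vanishes). Then $\rho\mathbf{v}\equiv\mathbf{m}$ is independent of $t$, so $\partial_t\rho+\nabla\cdot(\rho\mathbf{v})=(\rho^1-\rho^0)+\nabla\cdot\mathbf{m}=0$ with the correct endpoints, and the cost is
\[
\int_0^1\!\!\int_{\mathbb{R}^n}\|\mathbf{v}(t,\x)\|_{\mathrm{tr}}\,\rho(t,\x)\,d\x\,dt=\int_0^1\!\!\int_{\mathbb{R}^n}\|\mathbf{m}(\x)\|_{\mathrm{tr}}\,d\x\,dt=\int_{\mathbb{R}^n}\|\mathbf{m}(\x)\|_{\mathrm{tr}}\,d\x,
\]
using $1$-homogeneity and $\rho\ge0$ once more. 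Taking the infimum over $\mathbf{m}$ finishes the equality.

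The hard part will not be the algebra above but the analytic bookkeeping: making sense of $\mathbf{v}=\mathbf{m}/\rho$ where $\rho=0$, guaranteeing that the interpolated $\rho$ and the vector field lie in the admissible classes (continuity, nonnegativity, Borel measurability, finiteness of the integrals), and rigorously justifying the distributional integration of \eqref{UC} in $t$ and the use of Fubini. I expect these to be handled by a standard mollification/approximation argument --- regularize $\rho^0,\rho^1,\mathbf{m}$ to obtain strictly positive smooth data, run the computation, and pass to the limit using lower semicontinuity of the Benamou--Brenier functional and the direct method --- exactly as in the Euclidean case; the only tropical input is the convexity and $1$-homogeneity of $\|\cdot\|_{\mathrm{tr}}$ from Lemma \ref{lem:convex}, so nothing in the limiting procedure is special to the tropical setting.
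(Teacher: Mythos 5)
Your proposal is correct and follows essentially the same route as the paper's proof: one direction via the time-averaged flux $\mathbf{m}(\x)=\int_0^1\rho(t,\x)\mathbf{v}(t,\x)\,dt$ together with Jensen's inequality (convexity plus positive $1$-homogeneity of $\|\cdot\|_{\mathrm{tr}}$ from Lemma \ref{lem:convex}), and the other via the linear interpolation $\rho(t,\x)=(1-t)\rho^0(\x)+t\rho^1(\x)$ carrying the static flux. The only difference is that you spell out the feasibility check, the two-sided structure, and the measure-theoretic caveats (division by $\rho$ where it vanishes, mollification) that the paper's brief argument leaves implicit.
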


\begin{proof}
This minimal flux formulation follows the result in optimal transport theory. By Jensen's inequality, the minimizer of \eqref{UOTT} is obtained by a time-independent solution. Denote 
\begin{equation*}
\mathbf{m}(\x) := \int_0^1 \mathbf{v}(t,\x)\rho(t,\x)dt.
\end{equation*}
Then 
\begin{align*}
\int_{0}^1\int_{\mathbb{R}^n} \big\| \mathbf{v}(t,\x) \big\|_{\mathrm{tr}}\,\rho(t,\x)d\x dt
\geq \int_{\mathbb{R}^n} \big\| \mathbf{m}(\x) \big\|_{\mathrm{tr}}d\x
\end{align*}
By choosing $\rho(t,\x)=(1-t)\rho^0(\x)+t\rho^1(\x)$, i.e., $\rho^1(x)-\rho^0(x)+\nabla \cdot m(x)=0$, we derive the minimizer of above minimization problem. 
\end{proof}

Concretely, $\mathbf{m}(\x)$ is the flux vector field that assigns a vector to each point in the measure and determines how much of the mass (measure) should be moved, and in which direction.  


The reformulation of the tropical Wasserstein-1 distance given in Proposition \ref{prop:min_flux}~has enormous computational benefits, compared to that given in Definition \ref{def:wass} \citep{Li2018}.  Notably, the size of the optimization variable is much smaller in solving a discrete approximation; additionally, the structure of the formulation given in Proposition \ref{prop:min_flux} borrows from $L_1$-type minimization problems, which are well-studied and for which there exist fast and simple algorithms (see references in \cite{Li2018}).  We will reap these benefits in formulating explicit algorithms to compute the tropical Wasserstein-$p$ distances for $p=1,2$, as discussed further on in Section \ref{sec:computation}.

\paragraph{Geodesics on the Tropical Projective Torus.}

Geodesics on the tropical projective torus are not unique \citep{doi:10.1137/16M1079841, lin2018tropical}.  In particular, between any two given points in $\R^{n+1}/\R\one$, there are infinitely many geodesics.  The following result gives the explicit connection between geodesics on the tropical projective torus and the minimizer of the tropical Wasserstein-1 distance.

\begin{proposition}[Minimizer of the Tropical Wasserstein-1 distance]
\label{prop:min1}
The minimizer of the tropical Wasserstein-1 distance is given by the following pair:
\begin{equation}\label{pair}
\left\{
\begin{aligned}
&\nabla_{\mathbf{m}} \big\| \mathbf{m}(\x) \big\|_{\mathrm{tr}}=\nabla \Phi(\x) \quad \textrm{if $\mathbf{m}(\x)>0$,}\\
&\rho^1(\x)-\rho^0(\x)+\nabla\cdot \mathbf{m}(\x)=0.
\end{aligned}\right.
\end{equation}
\end{proposition}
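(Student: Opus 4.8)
The plan is to derive the optimality system \eqref{pair} by treating the minimal flux formulation \eqref{W1} from Proposition \ref{prop:min_flux} as a constrained convex optimization problem and applying Lagrangian duality, following the standard recipe for Beckmann-type problems in optimal transport. First I would introduce a Lagrange multiplier $\Phi \colon \mathbb{R}^n \to \mathbb{R}$ for the continuity constraint $\rho^1(\x) - \rho^0(\x) + \nabla \cdot \mathbf{m}(\x) = 0$ and form the Lagrangian
\[
\mathcal{L}(\mathbf{m}, \Phi) = \int_{\mathbb{R}^n} \big\| \mathbf{m}(\x) \big\|_{\mathrm{tr}}\, d\x + \int_{\mathbb{R}^n} \Phi(\x)\big(\rho^1(\x) - \rho^0(\x) + \nabla \cdot \mathbf{m}(\x)\big)\, d\x.
\]
Integrating the divergence term by parts (assuming sufficient decay at infinity, which is legitimate since $\mathbf{m}$ is an $L^1$ Borel flux and the measures have finite $p$th moment) rewrites the coupling as $-\int \nabla \Phi(\x) \cdot \mathbf{m}(\x)\, d\x$, so that the Lagrangian decouples pointwise in $\x$ into $\int_{\mathbb{R}^n}\big(\|\mathbf{m}(\x)\|_{\mathrm{tr}} - \nabla\Phi(\x)\cdot \mathbf{m}(\x)\big)d\x + \int_{\mathbb{R}^n}\Phi(\x)(\rho^1(\x)-\rho^0(\x))\,d\x$.

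Next I would write down the first-order stationarity (KKT) condition in $\mathbf{m}$. Since $\|\cdot\|_{\mathrm{tr}}$ is convex (this is exactly Lemma \ref{lem:convex}, viewing $\|\cdot\|_{\mathrm{tr}} = L_{\mathrm{tr}}$) but not differentiable at the origin, the condition is that $\nabla\Phi(\x)$ must lie in the subdifferential $\partial\|\mathbf{m}(\x)\|_{\mathrm{tr}}$ at the optimal $\mathbf{m}(\x)$; where $\mathbf{m}(\x) \neq 0$ the tropical norm is differentiable and this reduces to the stated equation $\nabla_{\mathbf{m}}\|\mathbf{m}(\x)\|_{\mathrm{tr}} = \nabla\Phi(\x)$, while the constraint $\rho^1 - \rho^0 + \nabla\cdot\mathbf{m} = 0$ is just primal feasibility. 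Together these are precisely the pair in \eqref{pair}. I would also note that by convex duality — strong duality holding because the constraint is affine and the objective is a proper convex lower-semicontinuous functional — any $(\mathbf{m}, \Phi)$ satisfying this system is indeed a minimizer, not merely a critical point; the potential $\Phi$ is the Kantorovich potential and the dual problem is $\sup_\Phi \int \Phi\,(\rho^1 - \rho^0)$ subject to $\|\nabla\Phi\|_{\mathrm{tr}}^* \le 1$, where $\|\cdot\|_{\mathrm{tr}}^*$ is the dual norm.

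The main obstacle I anticipate is handling the nonsmoothness of $\|\cdot\|_{\mathrm{tr}}$ carefully: the tropical norm is piecewise linear, so its subdifferential is set-valued on a codimension-$\ge 1$ stratification of $\mathbb{R}^n$, and the clean equation in \eqref{pair} only holds on the region $\{\mathbf{m}(\x) \neq 0\}$ — indeed even there one should check differentiability, since $L_{\mathrm{tr}}$ is not differentiable across the interior walls where the active maximizing/minimizing index changes. I would address this by invoking the connection to the Hamiltonian $H$ computed in Proposition \ref{prop:hp}: for $p=1$ the constraint $H(\nabla\Phi) \le 0$ forces $\zeta(\nabla\Phi) \le 1$, which is exactly the dual-norm bound, and the complementary slackness / Fenchel equality $\|\mathbf{m}(\x)\|_{\mathrm{tr}} = \nabla\Phi(\x)\cdot\mathbf{m}(\x)$ characterizes the support of the optimal flux. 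A secondary technical point is justifying the integration by parts and the interchange of $\inf$ and the pointwise minimization rigorously in the Borel-measurable (rather than smooth) setting; I would either cite the standard Beckmann-problem duality results from the optimal transport literature (e.g., \cite{villani2003topics, ambrosio2013user}) or restrict attention to a density of smooth competitors and pass to the limit.
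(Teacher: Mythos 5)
Your proposal follows essentially the same route as the paper's proof: introduce a Lagrange multiplier $\Phi$ for the divergence constraint in the minimal flux formulation \eqref{W1}, form the saddle-point Lagrangian $L(\mathbf{m},\Phi)$, and read off the pair \eqref{pair} from the first-order optimality conditions in $\mathbf{m}$ and $\Phi$. Your added care about the subdifferential of $\|\cdot\|_{\mathrm{tr}}$ away from smooth points and about strong duality goes beyond the paper's brief argument, but the underlying approach is identical.
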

\begin{proof}
The minimizer of tropical Wasserstein-1 distance may be derived as follows.  Define a Lagrange multiplier $\Phi\colon \R^n \rightarrow \mathbb{R}$ for the equality constraint of \eqref{W1}, and consider the saddle point problem 
\begin{equation*}
L(\mathbf{m},\Phi)=\int_{\mathbb{R}^n}\|\mathbf{m}(\x)\|_{\textrm{tr}} d\x+\int_{\mathbb{R}^n} \Phi(\x)\big(\nabla\cdot\mathbf{m}(\x)+\rho^1(\x)-\rho^0(\x) \big)d\x.
\end{equation*}
Notice that $L$ is convex in $\mathbf{m}$ and concave in $\Phi$.  Thus, the saddle point $(\mathbf{m}, \Phi)$ satisfies $\delta_{\mathbf{m}}L(\mathbf{m},\Phi)=0$, $\delta_\Phi L(\mathbf{m},\Phi)=0$.  This corresponds to the equation pair \eqref{pair}. 
\end{proof}

\begin{remark}
We notice that the first equation in \eqref{pair} represents the {\em tropical Eikonal equation}
$$\zeta\big(\nabla\Phi(\x) \big)=1.$$
\end{remark}
The tropical Eikonal equation describes the movement of each particle according to the infinitely many geodesics under the tropical metric between $\rho^0$ to $\rho^1$.  This behavior will be explored and demonstrated numerically in experiments further on in Section \ref{sec:num_exp}. 

\begin{proposition}
The set of all infinitely many tropical geodesics is contained in a classical convex polytope.
\end{proposition}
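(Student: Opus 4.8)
The plan is to fix the two endpoints---the statement should be read for a fixed pair $\bar x,\bar y\in\R^{n+1}/\R\one$, since over all pairs the union of geodesics is all of $\R^n$---and to exhibit a single classical polytope containing every tropical geodesic from $\bar x$ to $\bar y$. Set $r:=d_{\mathrm{tr}}(\bar x,\bar y)$ and let $\overline B$ be the closed tropical ball $\overline B:=\{\bar p\in\R^{n+1}/\R\one:\ d_{\mathrm{tr}}(\bar p,\bar y)\le r\}$. I will show first that every geodesic lies in $\overline B$, and then that $\overline B$ is a classical convex polytope.

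For the first step, recall that by Proposition~\ref{prop:vf1} the quantity $d_{\mathrm{tr}}(\bar a,\bar b)$ equals the infimum of the lengths $\int_0^1\|\mathbf z'(t)\|_{\mathrm{tr}}\,dt$ of paths joining $\bar a$ and $\bar b$; in particular $d_{\mathrm{tr}}(\bar a,\bar b)$ is at most the length of any such path, and every geodesic $\gamma$ from $\bar x$ to $\bar y$ has length exactly $r$ (the straight segment from Proposition~\ref{prop:vf1} shows such geodesics exist). If $\bar p$ lies on $\gamma$, the portion of $\gamma$ from $\bar p$ to $\bar y$ is part of a path of total length $r$, hence has length at most $r$, and therefore $d_{\mathrm{tr}}(\bar p,\bar y)\le r$, i.e. $\bar p\in\overline B$. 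Thus $\gamma\subseteq\overline B$ for every geodesic, so the set of all (infinitely many) tropical geodesics from $\bar x$ to $\bar y$ is contained in $\overline B$.

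For the second step, identify $\R^{n+1}/\R\one$ with $\R^n$ as in Section~\ref{subsec:trop_metric} and let $(y_1,\dots,y_n)$ be the image of $\bar y$. By Definition~\ref{def:trop_metric}, for $\mathbf p\in\R^n$ we have $d_{\mathrm{tr}}(\bar p,\bar y)\le r$ if and only if
\[
\big|(p_i-y_i)-(p_j-y_j)\big|\le r\quad(1\le i<j\le n),\qquad |p_i-y_i|\le r\quad(1\le i\le n).
\]
Hence $\overline B$ is the solution set of finitely many affine inequalities, i.e. a polyhedron; and the inequalities $|p_i-y_i|\le r$ confine $\mathbf p$ to the box $\prod_{i=1}^n[\,y_i-r,\ y_i+r\,]$, so $\overline B$ is bounded. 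A bounded polyhedron is a classical convex polytope, which finishes the proof.

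No step here is a genuine obstacle; the two points that need care are (i) interpreting ``the set of all geodesics'' as the union of geodesics joining one fixed pair of endpoints, so that the statement is correct, and (ii) the boundedness of $\overline B$, for which it matters that in the $\R^n$-chart $d_{\mathrm{tr}}$ dominates each single coordinate difference $|p_i-y_i|$ and not only the pairwise differences---before passing to the chart, $d_{\mathrm{tr}}$ is degenerate along $\R\one$ and its ``balls'' are unbounded. One could also prove the sharper statement that the union of all geodesics from $\bar x$ to $\bar y$ is \emph{exactly} the set of minimizers of the convex piecewise-linear function $\mathbf p\mapsto d_{\mathrm{tr}}(\x,\mathbf p)+d_{\mathrm{tr}}(\mathbf p,\y)$---whose minimum value is $r$ by the triangle inequality---which, as the minimizer set of a polyhedral convex function sitting inside $\overline B$, is again a polytope.
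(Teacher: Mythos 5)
Your proof is correct, but it takes a different route from the paper's. The paper fixes the endpoints $\bar a,\bar b$ and observes that any point $\bar c$ on a geodesic satisfies the exact equality $\dtr(\bar c,\bar a)+\dtr(\bar c,\bar b)=\dtr(\bar a,\bar b)$, so the union of geodesics lies in (indeed equals) the tropical ellipse with foci $\bar a,\bar b$; it then invokes Proposition 26 of \cite{LinYoshida} to conclude that this tropical ellipse is a classical convex polytope. You instead only use the inequality $\dtr(\bar p,\bar y)\le r$ (via sub-path lengths and Proposition~\ref{prop:vf1}) to trap all geodesics inside the closed tropical ball of radius $r=\dtr(\bar x,\bar y)$, and then verify directly from the coordinate form of the metric in Definition~\ref{def:trop_metric} that this ball is a bounded intersection of finitely many half-spaces, hence a classical convex polytope. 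What your approach buys is self-containedness and elementarity: no external result is needed, and the boundedness issue (which is genuinely delicate, since before passing to the $\R^n$ chart the tropical balls are unbounded along $\R\one$) is handled explicitly. What the paper's approach buys is the sharper geometric picture: the containing polytope is the exact tropical ellipse, i.e.\ precisely the set swept out by the geodesics, rather than an enclosing ball --- which is essentially the refinement you sketch in your closing remark about the minimizer set of $\mathbf p\mapsto\dtr(\x,\mathbf p)+\dtr(\mathbf p,\y)$. Your reading of the statement as being for a fixed pair of endpoints is also the one intended by the paper.
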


\begin{proof}
For any point $\bar{c}$ on a tropical geodesic connecting $\bar{a}, \bar{b} \in \R^{n+1}/\R\one$, by the definition of geodesics, we have
\begin{equation*}
    \dtr(\bar{c},\bar{a})+\dtr(\bar{c},\bar{b}) = \dtr(\bar{a},\bar{b}).
\end{equation*}
So $\bar{c}$ belongs to a tropical ellipse with foci $\bar{a},\bar{b}$. By Proposition 26 of \cite{LinYoshida}, the set of all points on tropical geodesics is a classical convex polytope.
\end{proof}

\subsection{The Tropical Wasserstein-2 Distance}

We now consider the case where $p=2$. Here we refer to \eqref{Hab} using the notation $\eta(\mathbf{b})$.

\begin{proposition}[Minimizer of the Tropical Wasserstein-2 Distance]
\label{prop:min2}
The minimizer of the tropical Wasserstein-2 distance $(\mathbf{v}(t,\x), \rho(t,\x))$ satisfies 
$$
\mathbf{v}(t,\x)=\eta\big(\nabla\Phi(t,\x) \big),
$$ 
where $\eta\colon \mathbb{R}^n\rightarrow \mathbb{R}^n$ is given by 
\begin{equation*}
	\eta\big(\nabla\Phi(t,\x)\big)_i = 
	\begin{cases}
	\displaystyle \frac{\nabla_{x_i}\Phi(t,\x)}{|\nabla_{x_i}\Phi(t,\x)|}\cdot \zeta\big(\nabla\Phi(t,\x) \big) &\text{ for } i\in S; \\
	0 &\text{ for } i\notin S,
	\end{cases}
	\end{equation*}
where $S$ is as in (\ref{ms}).  Also,  
\begin{equation}\label{minimizer1}
\left \{
\begin{aligned}
&\partial_t\rho(t,\x)+\nabla\cdot \big( \rho(t,\x) \eta \big(\nabla \Phi(t,\x) \big) \big)=0,\\
&\partial_t\Phi(t,\x)+\frac{1}{2}{\zeta}\big(\nabla\Phi(t,\x) \big)^2\leq 0,\\
&\rho(0,\x)=\rho^0(\x),\quad \rho(1,\x)=\rho^1(\x).
\end{aligned}
\right.
\end{equation}
In particular, if $\rho(t,\x)>0$, then 
\begin{equation*}
\partial_t\Phi(t,\x)+\frac{1}{2}{\zeta}\big(\nabla\Phi(t,\x) \big)^2=0. 
\end{equation*}
\end{proposition}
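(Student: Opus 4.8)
The plan is to mirror the Lagrangian/saddle-point argument used for Proposition~\ref{prop:min1}, now applied to the dynamical formulation~\eqref{UOTT} with $p=2$, feeding in the explicit Legendre computation of Proposition~\ref{prop:hp}(iii). Concretely, I would introduce a Lagrange multiplier $\Phi\colon[0,1]\times\mathbb{R}^n\to\mathbb{R}$ for the continuity equation in~\eqref{UC} and form the saddle functional
\[
L(\rho,\mathbf{v},\Phi)=\int_0^1\!\!\int_{\mathbb{R}^n}\tfrac{1}{2}\|\mathbf{v}(t,\x)\|_{\mathrm{tr}}^2\,\rho(t,\x)\,d\x\,dt+\int_0^1\!\!\int_{\mathbb{R}^n}\Phi\big(\partial_t\rho+\nabla\cdot(\rho\mathbf{v})\big)\,d\x\,dt,
\]
which is the functional of~\eqref{UOT} up to the harmless overall factor of $2$. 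Integrating by parts in $t$ (using $\rho(0,\cdot)=\rho^0$, $\rho(1,\cdot)=\rho^1$) and in $\x$ (with the usual decay at infinity) rewrites this as
\[
L=\int_0^1\!\!\int_{\mathbb{R}^n}\Big(\tfrac12\|\mathbf{v}\|_{\mathrm{tr}}^2-\partial_t\Phi-\nabla\Phi\cdot\mathbf{v}\Big)\rho\,d\x\,dt+\int_{\mathbb{R}^n}\Phi(1,\x)\rho^1(\x)\,d\x-\int_{\mathbb{R}^n}\Phi(0,\x)\rho^0(\x)\,d\x.
\]
By Lemma~\ref{lem:convex} and the remark following it, $\tfrac12\|\cdot\|_{\mathrm{tr}}^2$ is convex, so $\|\mathbf{v}\|_{\mathrm{tr}}^2\rho$ is jointly convex in $(\rho,\mathbf{v})$ (a perspective function) and $L$ is convex in $(\rho,\mathbf{v})$, linear in $\Phi$; hence the minimizer is characterized by the stationarity conditions $\delta_{\mathbf{v}}L=0$, $\delta_\rho L=0$, $\delta_\Phi L=0$, exactly as in the proof of Proposition~\ref{prop:min1}.

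I would then read off the three conditions. Varying $\mathbf{v}$ pointwise where $\rho(t,\x)>0$ shows that $\mathbf{v}(t,\x)$ attains the supremum in $H(\nabla\Phi)=\sup_{\mathbf{a}}\{\mathbf{a}^{\T}\nabla\Phi-\tfrac12\|\mathbf{a}\|_{\mathrm{tr}}^2\}$, i.e.\ in the Hamiltonian~\eqref{eq:hp} with $p=2$. By Proposition~\ref{prop:hp}(iii) and its explicit maximizer~\eqref{Hab} (with $\zeta(\mathbf{b})^{1/(p-1)}=\zeta(\mathbf{b})$ when $p=2$), this maximizer is precisely $\eta(\nabla\Phi(t,\x))$, which gives $\mathbf{v}=\eta(\nabla\Phi)$ together with the stated componentwise formula for $\eta$. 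Substituting $\mathbf{v}=\eta(\nabla\Phi)$ back into the integrand, I would use the two identities $\|\eta(\mathbf{b})\|_{\mathrm{tr}}=\zeta(\mathbf{b})$ and $\mathbf{b}^{\T}\eta(\mathbf{b})=\zeta(\mathbf{b})^2$, both immediate from~\eqref{Hab} since the entries $b_i$ with $i\in S$ share a common sign (so $\sum_{i\in S}|b_i|=|\sum_{i\in S}b_i|=\zeta(\mathbf{b})$), to reduce the coefficient of $\rho$ to $-\partial_t\Phi-\tfrac12\zeta(\nabla\Phi)^2$. Finiteness of $\inf_{\rho\ge0}$ over this term forces $\partial_t\Phi+\tfrac12\zeta(\nabla\Phi)^2\le0$ everywhere, and complementary slackness ($\delta_\rho L=0$ on $\{\rho>0\}$) upgrades this to the equality $\partial_t\Phi+\tfrac12\zeta(\nabla\Phi)^2=0$ where $\rho>0$. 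Finally $\delta_\Phi L=0$ returns the continuity equation, which with $\mathbf{v}=\eta(\nabla\Phi)$ and the data $\rho(0,\cdot)=\rho^0$, $\rho(1,\cdot)=\rho^1$ is exactly the first line of~\eqref{minimizer1}; assembling these pieces yields the proposition.

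The genuinely routine steps are the two integrations by parts and the two $\eta$-identities; the substantive step is the pointwise minimization over $\mathbf{v}$, which is where the non-smoothness of $\|\cdot\|_{\mathrm{tr}}$ would normally be the obstacle, but that obstacle has already been dispatched by Proposition~\ref{prop:hp}. The one point I would treat only at the same formal level as the $p=1$ case is the passage from the primal minimization to the stationarity conditions: rigorously this needs the minimax interchange $\inf_{\rho,\mathbf{v}}\sup_\Phi L=\sup_\Phi\inf_{\rho,\mathbf{v}}L$ and a minimizing pair regular enough to justify the integrations by parts, which I would invoke from the standard Benamou--Brenier duality theory (e.g.\ Chapter~8 of~\cite{villani2003topics}) rather than reprove.
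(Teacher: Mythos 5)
Your overall strategy---a Lagrange multiplier for the continuity equation, saddle-point/stationarity conditions, and Proposition \ref{prop:hp} with $p=2$ to identify the optimal velocity as $\eta(\nabla\Phi)$ together with the Hamilton--Jacobi relation---is the same as the paper's, and your closing steps (nonnegativity of $\rho$ forcing $\partial_t\Phi+\tfrac12\zeta(\nabla\Phi)^2\le 0$, complementary slackness upgrading this to equality where $\rho>0$, and $\delta_\Phi L=0$ returning the continuity equation) in fact spell out more explicitly what the paper compresses into ``Following Proposition \ref{prop:hp}.'' Your two $\eta$-identities are also correct, since the entries $b_i$ with $i\in S$ share a sign.

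There is, however, one concrete error in your justification: $(\rho,\mathbf{v})\mapsto\rho\,\|\mathbf{v}\|_{\mathrm{tr}}^2$ is \emph{not} jointly convex, and it is not a perspective function in those variables. Already in one dimension, $f(\rho,v)=\rho v^2$ has Hessian with determinant $-4v^2<0$, so it is indefinite wherever $v\neq 0$. The perspective construction yields joint convexity of $(\mathbf{m},\rho)\mapsto\|\mathbf{m}\|_{\mathrm{tr}}^2/(2\rho)$, i.e.\ only after the change of variables $\mathbf{m}=\rho\mathbf{v}$---and this is precisely why the paper's proof substitutes $\mathbf{m}(t,\x)=\rho(t,\x)\mathbf{v}(t,\x)$ and works with $F(\mathbf{m},\rho)$ in \eqref{UOT1} before forming the Lagrangian. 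As written, your functional $L(\rho,\mathbf{v},\Phi)$ is not convex in the primal variables, so you cannot assert on convexity grounds that the minimizer is characterized by the stationarity conditions, nor invoke the minimax interchange for that saddle functional. The repair is minor: either pass to $(\mathbf{m},\rho)$ as the paper does, where the pointwise Legendre step at fixed $\rho>0$ gives $\sup_{\mathbf{m}}\bigl\{\nabla\Phi\cdot\mathbf{m}-\|\mathbf{m}\|_{\mathrm{tr}}^2/(2\rho)\bigr\}=\rho\,H\bigl(\nabla\Phi\bigr)$ with maximizer $\mathbf{m}=\rho\,\eta(\nabla\Phi)$, reproducing exactly your computation; or keep $(\rho,\mathbf{v})$ but justify the optimality conditions directly (pointwise minimization in $\mathbf{v}$ only needs convexity of $\tfrac12\|\cdot\|_{\mathrm{tr}}^2$ for each fixed $\rho>0$) while deferring the duality statement to the convexified $(\mathbf{m},\rho)$ formulation. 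With that correction your argument coincides with the paper's proof.
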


\begin{proof}
The minimizer path for the tropical Wasserstein-2 distance is derived as follows. For $p=2$, denote $\mathbf{m}(t,\x):=\rho(t,\x) v(t,\x)$ where 
\begin{equation*}
F(\mathbf{m},\rho)=\begin{cases}
\displaystyle \frac{\|\mathbf{m}\|_{\textrm{tr}}^2}{2\rho}& \textrm{if $\rho>0$;}\\
0 & \textrm{if $\rho=0$, $\mathbf{m}=0$;}\\
+\infty& \textrm{otherwise.}
\end{cases}
\end{equation*}
Then the variational problem \eqref{UOTT} can be reformulated as 
\begin{equation}\label{UOT1}
\begin{split}
\frac{1}{2}W_2^{\textrm{tr}}(\rho_0,\rho_1)^2=\inf_{\mathbf{m},\rho}\Big\{&\int_{0}^1\int_{\mathbb{R}^n} F\big(\mathbf{m}(t,\x), \rho(t,\x) \big)d\x dt\colon\\
&\partial_t\rho(t,\x)+\nabla\cdot\big( \mathbf{m}(t,\x) \big)= 0,\\
&\rho(0,\x)=\rho_0(\x),~\rho(1,\x)=\rho_1(\x)\Big\}.
\end{split}
\end{equation}
Notice that variational problem \eqref{UOT1} is convex in $(\mathbf{m},\mu)$. Again, we denote the Lagrange multiplier $\Phi\colon [0,1]\times \R^n \rightarrow \mathbb{R}$, then we can reformulate \eqref{UOT1} into a 
saddle point problem.
\begin{equation*}
L(\mathbf{m},\rho, \Phi)=\int F(\mathbf{m},\rho)+ \Phi(t,\x) \Big(\partial_t\rho(t,\x)+\nabla\cdot \mathbf{m}(t,\x)\Big) d\x.
\end{equation*}
Thus the saddle point $(\mathbf{m},\rho, \Phi)$ satisfies the system $\delta_{\mathbf{m}} L =0$, $\delta_\rho L \geq 0$, $\delta_\Phi L =0$, i.e.,
\begin{equation*}
\begin{cases}
& \displaystyle \frac{\nabla_{\mathbf{m}}\|\mathbf{m}\|_{\mathrm{tr}}^2}{\rho}=\nabla\Phi\\
& \displaystyle -\frac{\|\mathbf{m}\|_{\mathrm{tr}}^2}{2\rho}-\partial_t\Phi\geq 0. 
\end{cases}
\end{equation*}
Following Proposition \ref{prop:hp}, we obtain the minimizer of the system \eqref{minimizer1}. 
\end{proof}



\section{Algorithms: Solving the Optimal Transport Problem}
\label{sec:computation}

In this section, we design algorithms for solving the optimal transport problems that give rise to the tropical Wasserstein-$p$ distances and geodesics.  Our approach is mainly based on the {\em G-Prox primal-dual hybrid gradient} (G-Prox PDHG) algorithm \citep{JacobsLegerLiOsher2018_solvinga}, which is a modified version of Chambolle--Pock primal-dual algorithms \citep{pock1, pock2}. 

We now provide a brief overview of the algorithm; see \cite{JacobsLegerLiOsher2018_solvinga, pock1, pock2} for further details.  The classical primal-dual hybrid gradient algorithms convert the following minimization problem
$$ \min_{X} f(KX) + g(X) $$
into the following saddle point problem
$$ \min_X \max_Y \Big\{L(X,Y)=\left< KX, Y\right> + g(X) - f^*(Y)\Big\},$$
where $f$ and $g$ are convex functions with respect to a variable $X$, $f^*$ is a convex dual function of $F$, and $K$ is a continuous linear operator. For each iteration, the algorithm performs gradient descent on the primal variable $X$ and gradient ascent on the dual variable $Y$ as follows: 
\begin{equation}\label{iteration_PD}
\begin{cases}
X^{k+1}=&\arg\min_{X} L(X,Y^k)+\frac{1}{2h}\|X-X^k\|^2 ;\\
Y^{k+1}=&\arg\max_{Y} L(2X^{k+1}-X^k,Y)-\frac{1}{2\tau}\|Y-Y^k\|^2,
\end{cases}
\end{equation}
where suitable norms need to be considered in the update.  

For the tropical Wasserstein-1 and Wasserstein-2 distances, we apply the algorithm in \eqref{iteration_PD} to (\ref{W1}) and (\ref{UOT1}) by setting $Y = \Phi$ and specifying
\begin{align*}
W^{\mathrm{tr}}_1: \qquad &
\begin{aligned}[t]
X & = \mathbf{m},\\
KX & = \nabla \cdot \mathbf{m},\\
g(X) & = \|\mathbf{m}\|_{\mathrm{tr}},\\
f(X) & = \begin{cases} 
    0 & \text{ if } X + \rho^1 - \rho^0 = 0,\\
    \infty & \text{ otherwise};
    \end{cases}
\end{aligned}
\\
W^{\mathrm{tr}}_2: \qquad & 
\begin{aligned}[t]
X & = (\mathbf{m},\rho),\\
KX & = \partial_t \rho + \nabla \cdot \mathbf{m},\\
g(X) & = F(\mathbf{m},\rho),\\
f(X) & = \begin{cases}
    0 & \text{ if } X = 0,\\
    \infty & \text{ otherwise.}
    \end{cases}
\end{aligned}
\end{align*}

In this paper, we use a version of the G-Prox PDHG algorithm that applies the $H^1$ norm in the dual variable $Y$ update and uses the $L^2$ norm in the primal variable $X$ update. This choice of norms gives us more stable and faster convergence of the algorithm than the standard PDHG algorithm \citep{pock1}.

\subsection{Computing the Tropical Wasserstein-$1$ Distances}

We consider here $p=1$. We first present the spatial discretization to compute the general Wasserstein-1 distance. 

Consider a uniform lattice graph $G=(V, E)$ with spacing $\Delta \x$ to discretize the spatial domain, where 
$V$ is the vertex set $V=\{1,2,\ldots, N\},$ and $E$ is the edge set. 
Here $\mathbf{i}=(i_1, \ldots, i_d)\in V$ represents a point in $\mathbb{R}^d$. Consider a discrete probability set supported on all vertices:
\begin{equation*}
\mathcal{P}(G)=\left\{ (q_\mathbf{i})_{\mathbf{i}=1}^N\in \mathbb{R}^{N} \ \Big| \ \sum_{\mathbf{i}=1}^N q_\mathbf{i}=1,~q_\mathbf{i}\geq 0,~\mathbf{i} \in V \right\},
\end{equation*}
where $q_\mathbf{i}$ here represents a probability at node $i$, i.e., $q_\mathbf{i}=\int_{C_\mathbf{i}} \rho(\x)d\x$, and $C_\mathbf{i}$ is a cube centered at $\mathbf{i}$ with length $\Delta \x$. Thus, $\rho^0(\x)$, $\rho^1(\x)$ is approximated by $q^0=(q^0_\mathbf{i} )_{\mathbf{i}=1}^N$ and $q^1= (q^1_\mathbf{i} )_{\mathbf{i}=1}^N$.

We use two steps to compute the Wasserstein-1 distance on $\mathcal{P}(G)$. 
We first define a flux on a lattice. Denote the flux matrix as $\mathbf{m}=(\mathbf{m}_{\mathbf{i}+\frac{1}{2}})_{\mathbf{i}=1}^N\in \mathbb{R}^{N\times d}$, where each component $\mathbf{m}_{\mathbf{i}+\frac{1}{2}}$ is a row vector in $\mathbb{R}^d$, i.e., 
$$
\mathbf{m}_{\mathbf{i}+\frac{1}{2}}=\Big(\mathbf{m}_{\mathbf{i}+\frac{1}{2}e_v} \Big)_{v=1}^d=\Bigg(\int_{C_{\mathbf{i}+\frac{1}{2}e_v}}m^v(\x)d\x \Bigg)_{v=1}^d,
$$
where $e_v=(0,\ldots, \Delta \x,\ldots, 0)^\T$, with $\Delta \x$ at the $v$th column. In other words, if we denote $\mathbf{i}=(i_1, \ldots, i_d)\in \mathbb{R}^d$ and $\mathbf{m}(\x)=(\mathbf{m}^1(\x), \ldots, \mathbf{m}^d(\x))$, then 
\begin{equation*}
\mathbf{m}_{\mathbf{i}+\frac{1}{2}e_v}\approx \mathbf{m}^v\Big(i_1,\ldots,\, i_{v-1},\, i_v+\frac{1}{2}\Delta \x,\, i_{v+1},\ldots, i_d\Big)\Delta \x^d.
\end{equation*}
We consider a zero flux condition: if a point $\mathbf{i}+\frac{1}{2}e_v$ is outside the domain of interest $\Omega$, we let $\mathbf{m}_{\mathbf{i}+\frac{1}{2}e_v}=0$. Based on such a flux $\mathbf{m}$, we define a discrete divergence operator 
$\textrm{div}_G(\mathbf{m}):=(\textrm{div}_G\big(\mathbf{m}_\mathbf{i}))_{\mathbf{i}=1}^N$, where
\begin{equation*}
\textrm{div}_G(\mathbf{m}_\mathbf{i}):=\frac{1}{\Delta \x}\sum_{v=1}^d (\mathbf{m}_{\mathbf{i}+\frac{1}{2}e_v} - \mathbf{m}_{\mathbf{i}-\frac{1}{2}e_v}).
\end{equation*}
We next introduce the discrete cost functional 
\begin{equation*}
\|\mathbf{m}\|:=\sum_{\mathbf{i}=1}^N\|\mathbf{m}_{\mathbf{i}+\frac{1}{2}}\|_{2}=\sum_{\mathbf{i}=1}^N \sqrt{\sum_{v=1}^d |\mathbf{m}_{\mathbf{i}+\frac{e_v}{2}}|^2}.
\end{equation*}
This gives rise to the following optimization problem in the tropical setting
\begin{equation}\label{W1new}
\begin{aligned}
& \underset{\mathbf{m}}{\text{minimize}}
& &  \|\mathbf{m}\|_{\mathrm{tr}}=\sum_{\mathbf{i}=1}^N \sqrt{\sum_{v=1}^d \| \mathbf{m}_{\mathbf{i}+\frac{e_v}{2}}\|_{\mathrm{tr}}^2} \\
& \text{subject to}
& & \frac{1}{\Delta \x}\sum_{v=1}^d (\mathbf{m}_{\mathbf{i}+\frac{1}{2}e_v} - \mathbf{m}_{\mathbf{i}-\frac{1}{2}e_v})+q_{\mathbf{i}}^1-q_{\mathbf{i}}^0=0,
\end{aligned}
\end{equation}
for $\mathbf{i} =1,\ldots, N; v =1,\ldots, d.$

We solve \eqref{W1new} by studying its saddle point structure. Denoting the Lagrange multiplier of \eqref{W1new} as $\Phi=(\Phi_\mathbf{i})_{\mathbf{i}=1}^N$, we obtain \begin{equation}\label{saddle1}
\min_{\mathbf{m}}\max_{\Phi} \quad L(\mathbf{m}, \Phi):=\min_{\mathbf{m}}\max_{\Phi} \quad \|\mathbf{m}\|_{\mathrm{tr}}+\Phi^\T(\textrm{div}_G(\mathbf{m})+q^1-q^0).
\end{equation}

Saddle point problems such as \eqref{saddle1} are well studied by the first-order primal-dual hybrid gradient (PDHG) algorithm. 
Implementing the G-Prox PDHG algorithm gives the following iteration steps:
\begin{equation}\label{iteration}
\begin{cases}
\mathbf{m}^{k+1}=&\arg\min_{\mathbf{m}} L(\mathbf{m},\Phi^k)+\frac{1}{2h}\| \mathbf{m} - \mathbf{m}^k\|^2_{L^2},\\
\Phi^{k+1}=&\arg\max_{\Phi} L(2\mathbf{m}^{k+1}-\mathbf{m}^k,\Phi)-\frac{1}{2\tau}\|\Phi-\Phi^k\|^2_{H^1},
\end{cases}
\end{equation}
where the quantities $h$, $\tau$ are two small step sizes, and 
\begin{align*}
\|\mathbf{m}-\mathbf{m}^k\|^2_{L^2} & =\sum_{\mathbf{i}=1}^N\sum_{v=1}^d\Big(\mathbf{m}_{\mathbf{i}+\frac{1}{2}e_v}-\mathbf{m}_{\mathbf{i}+\frac{1}{2}e_v}^k \Big)^2 \Delta \x,\\ 
\|\Phi-\Phi^k\|^2_{H^1} & =\sum_{\mathbf{i}=1}^N\Big(\nabla_G\Phi_{\mathbf{i}}-\nabla_G\Phi_{\mathbf{i}}^k\Big)^2 \Delta \x.
\end{align*} 
These steps alternate a gradient ascent in the dual variable $\Phi$, and a gradient descent in the primal variable $\mathbf{m}$. 

It turns out that iteration \eqref{iteration} can be solved by simple explicit formulae. Since the unknown variables $\mathbf{m}$, $\Phi$ are component-wise separable in this problem, each of its components $\mathbf{m}_{\mathbf{i}+\frac{1}{2}}$, $\Phi_{\mathbf{i}}$ can be independently obtained by solving \eqref{iteration}. First, notice that
\begin{align*}
& \arg\min_{\mathbf{m}}~L(\mathbf{m},\Phi^k)+\frac{1}{2h}\|\mathbf{m}-\mathbf{m}^k\|^2_{L^2} \\
= &\arg\min_{\mathbf{m}_{\mathbf{i}+\frac{1}{2}}}\sum_{\mathbf{i}=1}^N\Bigg(\|\mathbf{m}_{\mathbf{i}+\frac{1}{2}}\|_{\mathrm{tr}}- \Big(\nabla_G \Phi_{\mathbf{i}+\frac{1}{2}}^k\Big)^\T \mathbf{m}_{\mathbf{i}+\frac{1}{2}}+\frac{1}{2h}\|\mathbf{m}_{\mathbf{i}+\frac{1}{2}}-\mathbf{m}_{\mathbf{i}+\frac{1}{2}}^k\|^2_{L^2}\Bigg),
\end{align*}
where $\nabla_G\Phi^k_{\mathbf{i}+\frac{1}{2}}:=\frac{1}{\Delta \x}(\Phi^k_{\mathbf{i}+e_v}-\Phi_{\mathbf{i}}^k)_{v=1}^d$. The first iteration in \eqref{iteration} has an explicit solution, which is: \begin{equation*}
\mathbf{m}^{k+1}_{i+\frac{1}{2}}=\textrm{shrink}_{\mathrm{tr}}(\mathbf{m}_{\mathbf{i}+\frac{1}{2}}^k+h\nabla_G \Phi^k_{\mathbf{i}+\frac{1}{2}}, h),
\end{equation*}
where the {\em shrink} operator is a projection operation to the unit ball with norm $\|\cdot\|_{\mathrm{tr}}$.  Its exact formulation is given further on in Proposition \ref{prop:shrink}.

Second, consider
\begin{align*}
& \arg\max_{\Phi} L(2\mathbf{m}^{k+1}-\mathbf{m}^k,\Phi)-\frac{1}{2\tau}\|\Phi-\Phi^k\|^2_2 \\
= &\arg\max_{\Phi} \sum_{\mathbf{i}=1}^N \max_{\Phi_\mathbf{i}}\Big( \Phi_{\mathbf{i}} \big(\textrm{div}_G(2\mathbf{m}^{k+1}_{\mathbf{i}}-\mathbf{m}^k_{\mathbf{i}})+q_{\mathbf{i}}^1-q_{\mathbf{i}}^0 \big)-\frac{1}{2\tau}\|\Phi_{\mathbf{i}}-\Phi^k_{\mathbf{i}}\|^2_{H^1}\Big).\\
\end{align*}
Thus the second iteration in \eqref{iteration} becomes 
\begin{equation*}
\Phi_{\mathbf{i}}^{k+1}=\Phi_{\mathbf{i}}^k+\tau (-\Delta_G)^{-1} \bigl( \textrm{div}_G(2\mathbf{m}_{\mathbf{i}}^{k+1}-\mathbf{m}^k_{\mathbf{i}})+q_\mathbf{i}^1-q_{\mathbf{i}}^0\bigl).
\end{equation*}
where $\Delta_G=\textrm{div}_G \cdot  \nabla_G$ is the discrete Laplacian operator. 

We are now ready to state our algorithm.
\begin{tabbing}
aaaaa\= aaa \=aaa\=aaa\=aaa\=aaa=aaa\kill  
   \rule{\linewidth}{0.8pt}\\
   \noindent{\large\bf G-Prox Primal-Dual Method for Computing}\\
   \noindent{\large\bf the Tropical Wasserstein-1 Distance}\\
\noindent\textbf{Input}: Discrete probabilities $q^0$, $q^1$; \\
\>~~~Initial guess of $\mathbf{m}^0$, step size $h$, $\tau$, tolerance $\epsilon$.\\
  \noindent \textbf{Output}: $\mathbf{m}$ and $W^\mathrm{tr}_1(\rho^0,\rho^1)$.\\
   \rule{\linewidth}{0.5pt}\\
1.  \> \textbf{while} \textrm{the relative error of} $\|\mathbf{m}\|_\text{tr} >\epsilon$\\
2.  \>\> $\mathbf{m}^{k+1}_{\mathbf{i}+\frac{1}{2}}=\textrm{shrink}_{\textrm{tr}}(\mathbf{m}_{\mathbf{i}+\frac{1}{2}}^k+h\nabla_G \Phi^k_{\mathbf{i}+\frac{1}{2}}, h)$ ;\\
3.  \>\> $\Phi_\mathbf{i}^{k+1}=\Phi_\mathbf{i}^k+\tau (-\Delta_{G})^{-1} \bigl( \textrm{div}_G(2\mathbf{m}_{\mathbf{i}}^{k+1}-\mathbf{m}^k_{\mathbf{i}})+q_\mathbf{i}^1-q^0_{\mathbf{i}}\bigl)$ ;\\ 
4.  \> \textbf{end}\\
   \rule{\linewidth}{0.8pt}
\end{tabbing}
\begin{remark}
The relative error at iteration $k$ is given by $\displaystyle \frac{|\|\mathbf{m}^k\|_\text{tr}-\|\mathbf{m}^{k-1}\|_\text{tr}|}{\|\mathbf{m}^{k-1}\|_\text{tr}}$.
\end{remark}

In the algorithm, we require the shrink operator with respect to the tropical metric, $\textrm{shrink}_{\textrm{tr}}$, which is given in the following result.
\begin{proposition}
\label{prop:shrink}
	Let $h>0$ and $b_{1}\ge b_{2}\ge \cdots \ge b_{k}\ge 0 > b_{k+1} \ge \cdots \ge b_{n}$. We denote 
	
	\[u_{i} = b_{i} \: \forall\ 1\le i\le k,~~~u_{k+1} = 0 \]
	and
	\[v_{i} = -b_{n+1-i} \: \forall\ 1\le i\le n-k,~~~v_{n-k+1} = 0. \]
	
	Suppose
	\begin{align*}
	j_{1} & =\begin{cases}
	\displaystyle \max \bigg(1\le j\le k+1 \ \Big| \ \sum_{i=1}^{j}{(u_{i} - u_{j})} < 1 \bigg), & \text{ if } k\ge 1, \\
	0, & \text{ if } k=0,\\
	\end{cases}
	\\
	\ell_{1} & = \max(j_{1},k);
	\end{align*}
	and
	\begin{align*}
	j_{2} & = \begin{cases}
	\displaystyle \max \bigg(1\le j\le n-k+1 \ \Big| \ \sum_{i=1}^{j}{(v_{i} - v_{j})} < 1 \bigg), & \text{ if } k\le n-1, \\
	0, & \text{ if } k=n,
	\end{cases}
	\\
	\ell_{2} & = \max(j_{2},n-k).
	\end{align*}
	We let
	\[t_{1} = 
	\begin{cases}
		\displaystyle \frac{\left(\sum_{i=1}^{j_{1}}{u_{i}}\right) - 1}{j_{1}} &\text{ if } 1\le j_{1}\le k; \\
		0 &\text{ otherwise}.
	\end{cases} \]
	and
	\[t_{2} = 
	\begin{cases}
		\displaystyle \frac{\left(\sum_{i=1}^{j_{2}}{v_{i}}\right) - 1}{j_{2}} &\text{ if } 1\le j_{2}\le n-k; \\
		0 &\text{ otherwise}.
	\end{cases} \]
	Then
	\begin{equation}\label{eq:shrink}
	\mathrm{shrink}_{tr}(\mathbf{b}, h):=\argmin_{\mathbf{a}\in \R^{n}}{\left\{\frac{\sum_{i=1}^{n}{a_{i}^{2}}}{2h} + \|\mathbf{a}\|_{\mathrm{tr}} - \sum_{i=1}^{n}{b_{i}\cdot a_{i}}\right\}}
	\end{equation}
	is the following unique point $\x\in \R^{n}$, where
	
	\[x_{i} =
	\begin{cases}
		h\cdot t_{1}, &\text{ if }i\le \ell_{1}; \\
		h\cdot b_{i}, &\text{ if } \ell_{1} < i < n + 1 - \ell_{2}; \\
		-h\cdot t_{2}, &\text{ if } i\ge n + 1 - \ell_{2}.
	\end{cases} \]	
\end{proposition}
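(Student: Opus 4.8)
The plan is a constructive ``water-filling'' computation of this proximal point. Write $J(\mathbf a)=\frac1{2h}\sum_{i=1}^n a_i^2+\|\mathbf a\|_{\mathrm{tr}}-\sum_{i=1}^n b_ia_i$ for the objective in \eqref{eq:shrink}. Since $\|\cdot\|_{\mathrm{tr}}$ is convex (Lemma \ref{lem:convex}) and the quadratic term is strictly convex and coercive, $J$ is strictly convex and coercive, so it has a unique minimizer and it suffices to exhibit it. First I would linearize the tropical norm via its epigraph form: for every $\mathbf a\in\R^n$,
\[
\|\mathbf a\|_{\mathrm{tr}}=\min\{\,u+v\ :\ u,v\ge 0,\ -v\le a_i\le u\ \ \forall\, i\,\},
\]
the minimum attained at $u=(\max_i a_i)^+$, $v=(-\min_i a_i)^+$ (verify the three cases: all $a_i\ge0$, all $a_i\le0$, mixed). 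Substituting, minimizing $J$ becomes the jointly convex box-constrained program of minimizing $\frac1{2h}\sum_i a_i^2-\sum_i b_ia_i+u+v$ over $\mathbf a\in\R^n$, $u\ge0$, $v\ge0$ subject to $-v\le a_i\le u$ for all $i$.

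Next I would minimize over $\mathbf a$ with $(u,v)$ fixed: this is coordinatewise, and since $a\mapsto\frac{a^2}{2h}-b_ia$ is minimized at $hb_i$, the optimal coordinate is the clipped value $a_i=\min\{u,\max\{-v,hb_i\}\}$. The structural observation that makes the problem tractable is that the two bounds decouple along the sign split of $\mathbf b$: for $1\le i\le k$, $hb_i\ge0\ge-v$ so $a_i=\min\{u,hb_i\}$ depends only on $u$; for $k<i\le n$, $hb_i<0\le u$ so $a_i=\max\{-v,hb_i\}$ depends only on $v$. Eliminating $\mathbf a$ thus splits the program into two independent one-dimensional convex minimizations $\min_{u\ge0}\psi_1(u)$ and $\min_{v\ge0}\psi_2(v)$, where, with the notation $u_i=b_i$ and $v_i=-b_{n+1-i}$ of the statement, $\psi_1(u)=u+\sum_{i=1}^k\big(\frac1{2h}\min\{u,hu_i\}^2-u_i\min\{u,hu_i\}\big)$ and $\psi_2$ has the identical shape in the $v_i$'s --- the order reversal plus sign flip making the negative block of coordinates look exactly like the positive block.

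The remaining work is to solve $\min_{u\ge0}\psi_1$ in closed form. Because $u_1\ge\cdots\ge u_k\ge0$, $\psi_1$ is piecewise quadratic with breakpoints $hu_1\ge\cdots\ge hu_k$, and on the segment where the first $j$ coordinates are clipped one computes $\psi_1'(u)=1+\frac jh u-\sum_{i=1}^j u_i$, with interior critical point $u=\frac hj\big(\sum_{i=1}^j u_i-1\big)$. A short monotonicity check shows $j\mapsto\sum_{i=1}^j(u_i-u_j)$ is nondecreasing (appending $u_{k+1}=0$), so the largest $j$ with $\sum_{i=1}^j(u_i-u_j)<1$ --- namely $j_1$ --- is exactly the segment carrying the global minimum; one then verifies its critical point equals $ht_1$ and lies in that segment, that $u=0$ is optimal precisely when $\sum_{i=1}^k u_i\le 1$ (the case $j_1=k+1$, where $t_1=0$), and disposes of the degenerate $k=0$ separately. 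This yields $u=ht_1$ with $\ell_1$ coordinates pinned to the upper cap; the symmetric analysis of $\psi_2$ gives $v=ht_2$ with $\ell_2$ coordinates at the lower cap. Reading off $x_i=\min\{u,\max\{-v,hb_i\}\}$ and sorting indices into ``at the upper cap'' / ``unclipped, equal to $hb_i$'' / ``at the lower cap'' produces the three-line formula.

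The main obstacle is precisely this last round of bookkeeping: showing $j_1$ is well defined, that it names the correct quadratic segment (this is where monotonicity of $j\mapsto\sum_{i\le j}(u_i-u_j)$ enters), and that the clipped counts $\ell_1,\ell_2$ and the cap levels $t_1,t_2$ stay mutually consistent across all the boundary cases --- $u=0$ or $v=0$, $k\in\{0,n\}$, and ties among the $b_i$. A shorter alternative that sidesteps the construction is to verify directly that the proposed $\mathbf x$ satisfies the first-order condition $\mathbf b-\mathbf x/h\in\partial\|\mathbf x\|_{\mathrm{tr}}$; here $\partial\|\mathbf x\|_{\mathrm{tr}}=\{\mathbf z:\zeta(\mathbf z)\le1,\ \mathbf z^{\T}\mathbf x=\|\mathbf x\|_{\mathrm{tr}}\}$, which follows from the duality between $\|\cdot\|_{\mathrm{tr}}$ and $\zeta$ encoded in Proposition \ref{prop:hp} (at $p=1$, $H$ is the indicator of $\{\zeta\le1\}$), so the check reduces to $\zeta(\mathbf b-\mathbf x/h)\le1$ together with complementary slackness --- but the same case split reappears there.
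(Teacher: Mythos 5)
Your proposal is correct in substance but follows a genuinely different route from the paper. The paper's proof is a verification argument: it writes down the explicit linear functional $\sum_{i=1}^{\ell_1}(u_i-t_1)a_i-\sum_{i=1}^{\ell_2}(v_i-t_2)a_{n+1-i}$, proves it is a global minorant of $\|\mathbf a\|_{\mathrm{tr}}$ (using $t_1\le u_i$ for $i\le j_1$, $t_2\le v_i$ for $i\le j_2$, and the inequalities $\sum_{i\le\ell_1}(u_i-t_1)\le 1$, $\sum_{i\le\ell_2}(v_i-t_2)\le 1$), substitutes this minorant into the objective to obtain a quadratic lower bound whose unconstrained minimizer is exactly the claimed point $\x$, and then checks that the minorant is tight at $\x$. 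This is in spirit the dual-certificate/subgradient check you sketch as your ``shorter alternative,'' carried out by hand rather than through $\partial\|\cdot\|_{\mathrm{tr}}$ and Proposition \ref{prop:hp}. Your primary route is instead constructive: the epigraph representation $\|\mathbf a\|_{\mathrm{tr}}=\min\{u+v:\ u,v\ge 0,\ -v\le a_i\le u\ \forall i\}$, partial minimization in $\mathbf a$ giving clipped coordinates, the observation that the positive and negative blocks of $\mathbf b$ decouple, and the closed-form solution of two one-dimensional piecewise-quadratic problems via the monotonicity of $j\mapsto\sum_{i\le j}(u_i-u_j)$. What your approach buys is an explanation of where $j_1$ and $t_1$ come from (the active segment and cap level of a water-filling problem) and a derivation that does not require guessing the answer; what the paper's approach buys is brevity, since once $\x$ is known only the minorant inequality and its tightness at $\x$ need checking.

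One concrete point you will hit in the final bookkeeping: the number of coordinates your construction pins at the upper cap is $\min(j_1,k)$ (for $i\le j_1$ one has $u_i\ge t_1$, so these are clipped; for $j_1<i\le k$ one has $u_i\le t_1$, so these stay at $hb_i$), not $\max(j_1,k)$ as the statement literally reads, and similarly for $\ell_2$. The paper's own proof and its $n=2$ table operate with the $\min$ convention; for instance, with $n=2$ and $\mathbf b=(5,\,0.5)$ one gets $j_1=1$, $t_1=4$, and the true minimizer is $(4h,\,h/2)$, so only one coordinate is capped, whereas $\ell_1=\max(j_1,k)=2$ would predict $(4h,\,4h)$. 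So when you ``read off the three-line formula,'' match it with $\ell_1=\min(j_1,k)$ and $\ell_2=\min(j_2,n-k)$; your construction itself produces the correct minimizer.
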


\begin{proof}
	Note that by definition of $t_{1}, t_{2}$, they are bounded by all of $u_{i}$ with $i\le j_{1}$ and all of $v_{i}$ with $i\le j_{2}$, respectively. In addition, we have
	\begin{equation}\label{eq:t1}
		\sum_{i=1}^{\ell_{1}}{\left(u_{i} - t_{1} \right)} \le 1
	\end{equation}
	and
	\begin{equation}\label{eq:t2}
		\sum_{i=1}^{\ell_{2}}{\left(v_{i} - t_{2} \right)} \le 1.
	\end{equation}
	Now we claim that
	\begin{equation}\label{eq:tropnorm-ine}
		\|\mathbf{a}\|_{\mathrm{tr}} \ge \sum_{i=1}^{\ell_{1}}{\left(u_{i} - t_{1} \right)\cdot a_{i}} - \sum_{i=1}^{\ell_{2}}{\left(v_{i} - t_{2} \right)\cdot a_{n+1-i}}.
	\end{equation}
	Notice that (\ref{eq:t1}) implies that
	\[ \sum_{i=1}^{\ell_{1}}{\left(u_{i} - t_{1} \right)\cdot a_{i}} \le \max_{1\le i\le j_{1}}{a_{i}}. \]
	We also have that (\ref{eq:t2}) implies that
	\[ \sum_{i=1}^{\ell_{2}}{\left(v_{i} - t_{2} \right)\cdot a_{n+1-i}} \ge \bigg(\sum_{i=1}^{\ell_{2}}{\left(v_{i} - t_{2} \right)}\bigg)\cdot \min_{1\le i\le j_{2}}{a_{n+1-i}} \ge \min\Big(0, \min_{1\le i\le j_{2}}{a_{n+1-i}} \Big). \]
	Hence, the right-hand side of (\ref{eq:tropnorm-ine})
	\begin{align*}  
	\sum_{i=1}^{\ell_{1}}{\left(u_{i} - t_{1} \right)\cdot a_{i}} - \sum_{i=1}^{\ell_{2}}{\left(v_{i} - t_{2} \right)\cdot a_{n+1-i}} & \le \max_{1\le i\le j_{1}}{a_{i}} - \min\Big(0, \min_{1\le i\le j_{2}}{a_{n+1-i}} \Big)\\
	& = \max_{1\le i_{1} \le j_{1}, 1\le i_{2} \le j_{2}}{\big(a_{i_{1}},\, a_{i_{1}} - a_{i_{2}} \big)}\\ 
	& \le \| \mathbf{a} \|_{\mathrm{tr}}.
	\end{align*}
	So our claim is proved. \\
	
	Since $h>0$ is a constant, we can multiply the objective function in (\ref{eq:shrink}) by $2h$. Now, this new function is greater than or equal to
	\begin{align*}
		& \sum_{i=1}^{n}{a_{i}^{2}} + 2h\left(\sum_{i=1}^{\ell_{1}}{\left(u_{i} - t_{1} \right)\cdot a_{i}} - \sum_{i=1}^{\ell_{2}}{\left(v_{i} - t_{2} \right)\cdot a_{n+1-i}}\right) - 2h\sum_{i=1}^{n}{b_{i}\cdot a_{i}} \\
		= & \sum_{i=1}^{n}{a_{i}^{2}} - \sum_{i=1}^{\ell_{1}}{2ht_{1}\cdot a_{i}} + \sum_{i=1}^{\ell_{2}}{2ht_{2}\cdot a_{n+1-i}} - 2h\sum_{i=\ell_{1}+1}^{n-\ell_{2}}{b_{i}\cdot a_{i}}.
	\end{align*}
	
	The global minimum of the last quadratic polynomial is attained exactly at the point $\x$ in Proposition \ref{prop:shrink}, so we have a lower bound for the new objective function, which is given when $\mathbf{a}=\x$. Finally, we note that the equality of (\ref{eq:tropnorm-ine}) is attained at $\x$, so this value is actually attained by $\mathbf{a}=\x$.
\end{proof}

\begin{example}
    When $n=2$, given $(b_{1},b_{2})\in \R^{2}$, suppose $x_{1} = f_{1}(b_{1},b_{2})$ and $x_{2} = f_{2}(b_{1},b_{2})$, then the shrink operator is given as follows. 
    \begin{table}[H]
        \centering
        \begin{tabular}{|c|c|c|c|c|c|c|c|c|c|}
            \hline
             $\mathbf{b}$ & $k$ & $j_{1}$ & $j_{2}$ & $\ell_{1}$& $\ell_{2}$ & $t_{1}$ & $t_{2}$ & $x_{1}$ & $x_{2}$ \\
             \hline\hline
             $b_{1}\ge b_{2}+1, b_{2}\ge 0$ & $2$ & $1$ & $0$ & $1$ & $0$ & $b_{1} - 1$ & $0$ & $h(b_{1}-1)$ & $b_{2}$ \\
             \hline
             $b_{1}<b_{2}+1, b_{1}\ge 1-b_{2}, b_{1}\ge b_{2}$ & $2$ & $2$ & $0$ & $2$ & $0$ & $\frac{b_{1}+b_{2}-1}{2}$  & $\frac{b_{1}+b_{2}-1}{2}$ & $h\frac{b_{1}+b_{2}-1}{2}$ & $h\frac{b_{1}+b_{2}-1}{2}$ \\
             \hline
             $b_{1}<1-b_{2}, b_{1}\ge b_{2}\ge 0$ & $2$ & $3$ & $0$ & $2$ & $0$ & $0$ & $0$ & $0$ & $0$ \\
             \hline
             $1>b_{1}\ge 0, 0\ge b_{2}>-1$ & $1$ & $2$ & $2$ & $1$ & $1$ & $0$ & $0$ & $0$ & $0$ \\
             \hline
             $1>b_{1}\ge 0, b_{2}\le -1$ & $1$ & $2$ & $1$ & $1$ & $1$ & $0$ & $-b_{2}-1$ & $0$ & $h(b_{2}+1)$ \\
             \hline
             $b_{1}\ge 1, 0\ge b_{2}>-1$ & $1$ & $1$ & $2$ & $1$ & $1$ & $b_{1}-1$ & $0$ & $h(b_{1}-1)$ & $0$ \\
             \hline
             $b_{1}\ge 1, b_{2}\le -1$ & $1$ & $1$ & $1$ & $1$ & $1$ & $b_{1}-1$ & $-b_{2}-1$ & $h(b_{1}-1)$ & $h(b_{2}+1)$ \\
             \hline
             $0\ge b_{1}\ge b_{2}$ & $0$ & $1$ &  & $1$ &  & $0$ & & $-f_{2}(-b_{2},-b_{1})$ & $-f_{1}(-b_{2},-b_{1})$ \\
             \hline
             $b_{1}<b_{2}$ & & & & & & & & $f_{2}(b_{2}, b_{1})$ & $f_{1}(b_{2}, b_{1})$ \\
             \hline
        \end{tabular}
        \caption{The operator $\textrm{shrink}_{\mathrm{tr}}$ when $n=2$}
        \label{tab:shrink_ex2}
    \end{table}
    
\end{example}

\begin{remark}
	Proposition \ref{prop:shrink} provides an algorithm to compute the shrink. Suppose we have $h>0$ and $\mathbf{a}_0, \mathbf{b}\in \R^{n}$ and we would like find
	\begin{equation*}
	\textrm{shrink}_{\mathrm{tr}}(\mathbf{a}_0+h \mathbf{b}, h)=\argmin_{\mathbf{a}\in \R^{n}} \left\{ \frac{|\mathbf{a}-\mathbf{a}_0|_{2}^{2}}{2h} + \|\mathbf{a}\|_{\mathrm{tr}} - \sum_{i=1}^{n}{b_{i}\cdot a_{i}} \right\}.
	\end{equation*}			
	Note that
	\[|\mathbf{a}-\mathbf{a}_0|_{2}^{2} = \sum_{i=1}^{n}{\left(a_{i} - a_{0i} \right)^{2}} = \sum_{i=1}^{n}{a_{i}^{2}} - \sum_{i=1}^{n}{2a_{0i}\cdot a_{i}} + \text{ constant.} \]
	Then we let $\mathbf{b}' = \mathbf{b} + \frac{\mathbf{a}_0}{h}$, the optimization problem becomes the one in Proposition \ref{prop:shrink} for $\mathbf{b}'$ and $h$ after sorting the coordinates of $\mathbf{b}'$.
\end{remark}

\subsection{Computing the Tropical Wasserstein-$2$ Distances}
\label{sec:computation-l2}

We now present an algorithm to compute the tropical Wasserstein-2 distance in the tropical projective torus $\R^3/\R\one$ identified with $\mathbb{R}^2$. Consider the same uniform lattice graph on a domain $\Omega \subset \mathbb{R}^2$ as in the case for the tropical Wasserstein-1 distance. Define the following matrices 
\begin{align*}
\boldsymbol{\rho} &= \big(\boldsymbol{\rho}^n_{\mathbf{i}} \big)^{N_x, N_t}_{\mathbf{i},n=1}\\
\mathbf{m} &= \Big(\mathbf{m}^n_{\mathbf{i}+\frac{1}{2}e_v} \Big)^{d, N_x, N_t}_{v,\mathbf{i},n=1}
\end{align*}
where the time interval is discretized uniformly with $N_t$ points, and $N_x$ is the number of vertices from a uniform lattice graph. Here we assume Neumann boundary conditions for $\boldsymbol{\rho}$: $\displaystyle \frac{\partial \rho}{\partial \hat{\mathbf{n}}} = 0$ on $\partial \Omega$, where $\hat{\mathbf{n}}$ is a outward normal vector.
Given initial densities $\rho_0$ and $\rho_1$, the boundary conditions for $\rho$ at $t=0$ and $t=1$ are
\begin{align*}
    \big(\boldsymbol{\rho}^1_{\mathbf{i}} \big)^{N_x}_{{\mathbf{i}}=1} = \rho_0 \mbox{~~~and~~~}  \big(\boldsymbol{\rho}^{N_t}_{\mathbf{i}} \big)^{N_x}_{\mathbf{i}=1} = \rho_1.
\end{align*}
Define $\Delta t := \frac{1}{N_t}$. We can reformulate the minimization problem (\ref{UOT1}) into a discretization as follows:
\begin{equation}\label{W2new}
\begin{aligned}
& \underset{\mathbf{m}}{\text{minimize}}
& &  \sum^{N_t}_{n=1} \sum^{N_x}_{\mathbf{i}=1} \frac{\|\mathbf{m}^n_{\mathbf{i}+\frac{1}{2}}\|^2_{\mathrm{tr}}}{2 \boldsymbol{\rho}^n_{\mathbf{i}}}\Delta \x \Delta t \\
& \text{subject to}
& & \partial_t \boldsymbol{\rho}^n_\mathbf{i} + \text{div}_G(\mathbf{m}^n_{\mathbf{i}})=0, \quad \mathbf{i}=1,\ldots, N_x;~ n=1,\ldots, N_t\\
& & & \big(\boldsymbol{\rho}^1_{\mathbf{i}}\big)^{N_x}_{{\mathbf{i}}=1}=\rho_0,\\
& & & \big(\boldsymbol{\rho}^{N_t}_{\mathbf{i}} \big)^{N_x}_{{\mathbf{i}}=1}.
\end{aligned}
\end{equation}
where 
\begin{equation*}
    \partial_t \boldsymbol{\rho}^n_{\mathbf{i}}
        = 
    \begin{cases}
        \frac{1}{\Delta t} (\boldsymbol{\rho}^{n+1}_{\mathbf{i}} - \boldsymbol{\rho}^{n}_{\mathbf{i}}) & \text{ for } n = 1\\
        \frac{1}{2\Delta t} (\boldsymbol{\rho}^{n+1}_{\mathbf{i}} - \boldsymbol{\rho}^{n-1}_{\mathbf{i}}) & \text{ for } n = 2,\ldots, N_t-1\\
        \frac{1}{\Delta t} (\boldsymbol{\rho}^{n}_{\mathbf{i}} - \boldsymbol{\rho}^{n-1}_{\mathbf{i}}) & \text{ for } n = N_t
    \end{cases}
\end{equation*}
and
\begin{equation*}
    \text{div}_G(\mathbf{m}^n_{\mathbf{i}}) = \frac{1}{\Delta x} \sum^2_{v=1} \Big(\mathbf{m}^n_{\mathbf{i}+\frac{1}{2}e_v} - \mathbf{m}^n_{\mathbf{i}-\frac{1}{2}e_v} \Big)\indent \text{ for } n=1,\ldots,N_t.
\end{equation*}
In $\mathbb{R}^2$, using (\ref{L}), we can calculate the tropical norm of the flux function $\mathbf{m}$ by considering the six different cases based on $\{\mathbf{m}_{\mathbf{i}+\frac{1}{2}e_v}\}^2_{v=1}$. The tropical norm of $\mathbf{m}$ is given as follows:

\begin{table}[H]
    \centering
    \begin{tabular}{|c|c|}
        \hline
         $\mathbf{m}_{\mathbf{i}+\frac{1}{2}}$ & $\|\mathbf{m}_{\mathbf{i}+\frac{1}{2}}\|_{\mathrm{tr}}$  \\
         \hline\hline
         $\mathbf{m}_{\mathbf{i}+\frac{1}{2}e_1}>\mathbf{m}_{\mathbf{i}+\frac{1}{2}e_2}>0$ & $\mathbf{m}_{\mathbf{i}+\frac{1}{2}e_1}$\\
         \hline
         $\mathbf{m}_{\mathbf{i}+\frac{1}{2}e_2}>\mathbf{m}_{\mathbf{i}+\frac{1}{2}e_1}>0$ & $\mathbf{m}_{\mathbf{i}+\frac{1}{2}e_2}$\\
         \hline
         $0>\mathbf{m}_{\mathbf{i}+\frac{1}{2}e_2}>\mathbf{m}_{\mathbf{i}+\frac{1}{2}e_1}$ & $-\mathbf{m}_{\mathbf{i}+\frac{1}{2}e_1}$\\
         \hline
         $0>\mathbf{m}_{\mathbf{i}+\frac{1}{2}e_1}>\mathbf{m}_{\mathbf{i}+\frac{1}{2}e_2}$ & $-\mathbf{m}_{\mathbf{i}+\frac{1}{2}e_2}$\\
         \hline
         $\mathbf{m}_{\mathbf{i}+\frac{1}{2}e_1}>0>\mathbf{m}_{\mathbf{i}+\frac{1}{2}e_2}$ & $\mathbf{m}_{\mathbf{i}+\frac{1}{2}e_1}-\mathbf{m}_{\mathbf{i}+\frac{1}{2}e_2}$ \\
         \hline
         $\mathbf{m}_{\mathbf{i}+\frac{1}{2}e_2}>0>\mathbf{m}_{\mathbf{i}+\frac{1}{2}e_1}$ & $\mathbf{m}_{\mathbf{i}+\frac{1}{2}e_2}-\mathbf{m}_{\mathbf{i}+\frac{1}{2}e_1}$ \\
         \hline
    \end{tabular}
    \caption{Tropical norm when $n=2$}
    \label{}
\end{table}

Let $\Phi=(\Phi^n_{\mathbf{i}})_{\mathbf{i}=1}^{N_x}{}_{n=1}^{N_t}$ here be the Lagrange multiplier which satisfies the Neumann boundary condition on the boundary of the domain. The minimization problem (\ref{W2new}) can be reformulated as a saddle point problem. 
\begin{equation}\label{saddle}
\min_{\mathbf{m},\boldsymbol{\rho}}\max_{\Phi} \quad L(\mathbf{m},\boldsymbol{\rho}, \Phi):=\min_{\mathbf{m},\boldsymbol{\rho}}\max_{\Phi} \quad \sum^{N_t}_{n=1} \sum^{N_x}_{\mathbf{i}=1} \frac{\|\mathbf{m}^n_{\mathbf{i}+\frac{1}{2}}\|^2_{\mathrm{tr}}}{2\boldsymbol{\rho}^n_{\mathbf{i}}}+\Phi^n_{\mathbf{i}} \Big(\partial_t\boldsymbol{\rho}^n_{\mathbf{i}} + \textrm{div}_G\big(\mathbf{m}^n_{\mathbf{i}+\frac{1}{2}} \big) \Big).
\end{equation}
Again, we implement G-Prox PDHG to solve the problem as follows:
\begin{equation}\label{iteration-WT2}
\left\{
\begin{array}{ll}
\boldsymbol{\rho}^{k+1} = \text{arg}\min_{\boldsymbol{\rho}} & \quad 
            L(\mathbf{m}^k,\boldsymbol{\rho},\Phi^k) + \frac{1}{2\tau} \|\boldsymbol{\rho}-\boldsymbol{\rho}^k\|^2_{L^2(\Omega\times[0,1])},\\
m^{k+1} = \text{arg}\min_{\mathbf{m}} & \quad
            L(\mathbf{m},\boldsymbol{\rho}^{k+1},\Phi^k) + \frac{1}{2\tau} 
            \|\mathbf{m}-\mathbf{m}^k\|^2_{L^2(\Omega\times[0,1])},\\
\Phi^{k+1} = \text{arg}\max_\Phi & \quad L(2\mathbf{m}^{k+1} - \mathbf{m}^k,2\boldsymbol{\rho}^{k+1}-\boldsymbol{\rho}^k,\Phi) - \frac{1}{2h} \|\Phi-\Phi^k\|^2_{H^1(\Omega\times[0,1])},
\end{array}
\right.
\end{equation}

\noindent where $h$, $\tau$ are two small step sizes and
\begin{align*}
\|\boldsymbol{\rho}-\boldsymbol{\rho}^k\|^2_{L^2}&=\sum^{N_t}_{n=1} \sum^{N_x}_{\mathbf{i}=1} \big(\boldsymbol{\rho}^n_{\mathbf{i}}-(\boldsymbol{\rho}^n_{\mathbf{i}})^k\big)^2 \Delta \x \Delta t\\
    \|\Phi-\Phi^k\|^2_{H^1}&=\sum^{N_t}_{n=1} \sum^{N_x}_{\mathbf{i}=1} \left( (\partial_t \Phi^n_{\mathbf{i}} - \partial_t (\Phi^n_{\mathbf{i}})^k)^2 + \|\nabla_G\Phi^n_{\mathbf{i}} - \nabla_G(\Phi^n_{\mathbf{i}})^k\|^2 \right) \Delta \x \Delta t.
\end{align*}
From (\ref{iteration-WT2}), each component $\mathbf{m}^n_{\mathbf{i}+\frac{1}{2}}$, $\boldsymbol{\rho}^n_{\mathbf{i}}$, and $\Phi^n_{\mathbf{i}}$ can be obtained. From the first iteration,
\begin{equation*}
    \begin{aligned}
    \boldsymbol{\rho}^{k+1}=&\text{arg}\min_{\boldsymbol{\rho}} \quad
                L(\mathbf{m}^k,\boldsymbol{\rho},\Phi^k) + \frac{1}{2\tau} \|\boldsymbol{\rho}-\boldsymbol{\rho}^k\|^2_{L^2}
            \\
    =&\text{arg}\min_{\boldsymbol{\rho}} \quad
              \sum^{N_t}_{n=1}\sum^{N_x}_{\mathbf{i}=1} \frac{\|(\mathbf{m}^n_{\mathbf{i}+\frac{1}{2}})^k\|^2_{\mathrm{tr}}}{2\boldsymbol{\rho}^n_{\mathbf{i}}}+(\Phi^n_{\mathbf{i}})^k \partial_t\boldsymbol{\rho}^n_{\mathbf{i}}  + \frac{1}{2\tau} 
               \|\boldsymbol{\rho}-\boldsymbol{\rho}^k\|^2_{L^2}
    \end{aligned}
\end{equation*}
We calculate the minimizer by differentiating the equation with respect to $\boldsymbol{\rho}^n_{\mathbf{i}}$. The minimizer $\boldsymbol{\rho}^{k+1}$ is a positive root of the following cubic polynomial:
\begin{align*} 
    - \frac{\|(\mathbf{m}^n_{\mathbf{i}})^{k}\|^2_{\text{tr}}}{2 ((\boldsymbol{\rho}^n_{\mathbf{i}})^{k+1})^2} - \partial_t (\Phi^n_{\mathbf{i}})^k + \frac{1}{\tau} \big((\boldsymbol{\rho}^n_{\mathbf{i}})^{k+1} - (\boldsymbol{\rho}^n_{\mathbf{i}})^k \big) = 0.
\end{align*}
Thus, we can calculate the root by using a cubic solver.
\begin{align*} 
    (\boldsymbol{\rho}^n_{\mathbf{i}})^{k+1} = \text{root}^+\biggl(-(\boldsymbol{\rho}^n_{\mathbf{i}})^k - \tau \partial_t (\Phi^n_{\mathbf{i}})^k, 0, -\frac{\tau}{2} \|(\mathbf{m}^n_{\mathbf{i}})^k\|^2_{\text{tr}} \biggl),
\end{align*}
where $\text{root}^+(a,b,c)$ is a solution for a cubic polynomial $x^3 + a x^2 + b x + c = 0$.\\

We can reformulate the second iteration as follows:
\begin{equation*}
    \begin{aligned}
    \mathbf{m}^{k+1} &= \text{arg}\min_{\mathbf{m}} \quad
            L(\mathbf{m},\boldsymbol{\rho}^{k+1},\Phi^k) + \frac{1}{2\tau} 
            \|\mathbf{m}-\mathbf{m}^k\|^2_{L^2}
        \\
    &= \text{arg}\min_{\mathbf{m}} \sum^{N_t}_{n=1} \sum^{N_x}_{\mathbf{i}=1} \frac{\|\mathbf{m}^n_{\mathbf{i}+\frac{1}{2}}\|^2_{\mathrm{tr}}}{2(\rho^n_{\mathbf{i}})^{k+1}} + \Phi^n_{\mathbf{i}} \text{div}_G(\mathbf{m}^n_{\mathbf{i}+\frac{1}{2}})+ \frac{1}{2\tau} 
            \|\mathbf{m}-\mathbf{m}^k\|^2_{L^2}\\
    &= \text{arg}\min_{\mathbf{m}} \sum^{N_t}_{n=1} \sum^{N_x}_{\mathbf{i}=1} \frac{\|\mathbf{m}^n_{\mathbf{i}+\frac{1}{2}}\|^2_{\mathrm{tr}}}{2(\boldsymbol{\rho}^n_{\mathbf{i}})^{k+1}} - \mathbf{m}^n_{\mathbf{i}+\frac{1}{2}} \nabla_G \Phi^n_{\mathbf{i}} + \frac{1}{2\tau} 
            \|\mathbf{m}-\mathbf{m}^k\|^2_{L^2}\\            
    &= \text{arg}\min_{\mathbf{m}} \sum^{N_t}_{n=1} \sum^{N_x}_{\mathbf{i}=1} \frac{\|\mathbf{m}^n_{\mathbf{i}+\frac{1}{2}}\|^2_{\mathrm{tr}}}{2(\boldsymbol{\rho}^n_{\mathbf{i}})^{k+1}} + \frac{1}{2\tau} 
            \|\mathbf{m}-\mathbf{m}^k-\tau \nabla_G \Phi\|^2_{L^2}.
    \end{aligned}
\end{equation*}
\noindent Differentiating the equation with respect to $\mathbf{m}^n_{\mathbf{i}+\frac{1}{2}}$, we obtain the following expression:
\begin{equation*}
    \begin{split}
        \frac{\|\mathbf{m}^n_{\mathbf{i}+\frac{1}{2}}\|_\text{tr} \nabla_G \|\mathbf{m}^n_{\mathbf{i}+\frac{1}{2}}\|_\text{tr}}{(\boldsymbol{\rho}^n_{\mathbf{i}})^{k+1}} + \frac{1}{\tau}\big(\mathbf{m}^n_{\mathbf{i}+\frac{1}{2}} - (\mathbf{m}^n_{\mathbf{i}+\frac{1}{2}})^k - \tau \nabla_G \Phi^n_{\mathbf{i}} \big) = 0.
    \end{split}
\end{equation*}
Solving this expression gives an explicit solution for $(\mathbf{m}^n_{\mathbf{i}+\frac{1}{2}})^{k+1}$:
\begin{align}
    (\mathbf{m}^n_{\mathbf{i}+\frac{1}{2}})^{k+1}=\bm{F}\Big( \big(\mathbf{m}^n_{\mathbf{i}+\frac{1}{2}} \big)^k+\tau \nabla_G (\Phi^n_{\mathbf{i}})^k,\, \tau/(\boldsymbol{\rho}^n_{\mathbf{i}})^{k+1} \Big).
\end{align}
\noindent Let $\mu=\tau/(\boldsymbol{\rho}^n_{\mathbf{i}})^{k+1}$ and $c=(c_1,c_2)$ be
\begin{align*}
    c_1=(\mathbf{m}^n_{\mathbf{i}+\frac{1}{2}e_1})^k+\tau \nabla_{x_1} (\Phi^n_{\mathbf{i}+\frac{1}{2}e_1})^k\\
    c_2=(\mathbf{m}^n_{\mathbf{i}+\frac{1}{2}e_2})^k+\tau \nabla_{x_2} (\Phi^n_{\mathbf{i}+\frac{1}{2}e_2})^k.
\end{align*}
The function $\bm{F}(c,\mu)$ is then given as follows:

\begin{table}[H]
        \centering
        \begin{tabular}{|c|c|}
            \hline
              $c_1,c_2,\mu$ & $\bm{F}(c,\mu)$ \\
             \hline\hline
             $c_2>(1+\mu)c_1>0$ or $c_2<(1+\mu)c_1<0$ & $\displaystyle \Big(c_1,\frac{c_2}{1+\mu} \Big)$  \\
             \hline
             $c_1>(1+\mu)c_2>0$ or $c_1<(1+\mu)c_2<0$ & $\displaystyle \Big(\frac{c_1}{1+\mu},c_2 \Big)$ \\
             \hline
             $-\frac{\mu}{1+\mu}c_1>c_2>-\frac{1+\mu}{\mu}c_1$ or $-\frac{\mu}{1+\mu}c_1<c_2<-\frac{1+\mu}{\mu}c_1$ & $\displaystyle \bigg(\frac{(1+\mu)c_1+\mu c_2}{1+2\mu},\frac{(1+\mu)c_2+\mu c_1}{1+2\mu} \bigg)$ \\
             \hline
             $-\frac{\mu}{1+\mu}c_1>c_2>0$ or $-\frac{\mu}{1+\mu}c_1<c_2<0$ & $\displaystyle \Big(\frac{c_1}{1+\mu},0 \Big)$ \\
             \hline
             $c_2>-\frac{1+\mu}{\mu}c_1>0$ or $c_2<-\frac{1+\mu}{\mu}c_1<0$ & $\displaystyle \Big(0,\frac{c_2}{1+\mu} \Big)$ \\
             \hline
             $(1+\mu)c_1>c_2>\frac{1}{1+\mu}c_1$ or $(1+\mu)c_1<c_2<\frac{1}{1+\mu}c_1$ & $\displaystyle \Big(\frac{c_1+c_2}{2+\mu},\frac{c_1+c_2}{2+\mu} \Big)$ \\
             \hline
        \end{tabular}
        \caption{The definition of $\bm{F}(c,\mu)$}
        \label{}
    \end{table}
    
Similarly, we get an explicit formula of $\Phi^{k+1}$ from the third iteration.
\begin{equation*}
    (\Phi^n_{\mathbf{i}})^{k+1} = (\Phi^n_{\mathbf{i}})^k + h (-\Delta_{t,G})^{-1}\Big( \partial_t \big(2(\boldsymbol{\rho}^n_{\mathbf{i}})^{k+1}-(\boldsymbol{\rho}^n_{\mathbf{i}})^k\big) + \text{div}_{t,G} \Big(2\big(\mathbf{m}^n_{\mathbf{i}+\frac{1}{2}} \big)^{k+1}- \big(\mathbf{m}^n_{\mathbf{i}+\frac{1}{2}} \big)^k \Big)  \Big)
\end{equation*}
for $\mathbf{i}=1,\ldots N_x$ and $n=1,\ldots,N_t$. 
Here, $\Delta_{t,G} = \partial_{tt} + \Delta_G$ is the discrete Laplacian operator over time and space. 

Now, define 
$$
E^k :=\sum^{N_t}_{n=1} \sum^{N_x}_{\mathbf{i}=1} \frac{\big\|\big(\mathbf{m}^n_{\mathbf{i}+\frac{1}{2}} \big)^k \big\|^2_{\mathrm{tr}}}{2(\boldsymbol{\rho}^n_{\mathbf{i}})^k}.
$$
Then the relative error at iteration $k$ is calculated as $\displaystyle \frac{|E^k-E^{k-1}|}{|E^{k-1}|}$.

We are now ready to present our algorithm to compute the tropical Wasserstein-2 metric.
\begin{tabbing}
aaaaa\= aaa \=aaa\=aaa\=aaa\=aaa=aaa\kill  
   \rule{\linewidth}{0.8pt}\\
   \noindent{\large\bf G-Prox Primal-Dual Method for Computing the}\\
   \noindent{\large\bf Tropical Wasserstein-2 Distance}\\
\noindent\textbf{Input}: Discrete probabilities $\rho^0$, $\rho^1$; \\
\> {Initial} guess of $\boldsymbol{\rho}$, $\mathbf{m}$, $\Phi$, step size $\tau$, $h$,tolerance $\epsilon$\\
  \noindent \textbf{Output}: $\mathbf{m}$ and $W^{\mathrm{tr}}_2(\rho^0,\rho^1)$.\\
   \rule{\linewidth}{0.5pt}\\
1.  \> \textbf{while} \textrm{the relative error of } $\displaystyle \sum^{N_t}_{n=1} \sum^{N_x}_{\mathbf{i}=1} \frac{\big\|\mathbf{m}^n_{\mathbf{i}+\frac{1}{2}} \big\|^2_{\mathrm{tr}}}{2\boldsymbol{\rho}^n_{\mathbf{i}}}  > \epsilon$ \\
2.  \>\> $(\boldsymbol{\rho}^n_{\mathbf{i}})^{k+1} = \text{root}^+\biggl(-(\boldsymbol{\rho}^n_{\mathbf{i}})^k - \tau \partial_t (\Phi^n_{\mathbf{i}})^k,\, 0,\, -\frac{\tau}{2} \|(\mathbf{m}^n_{\mathbf{i}})^k\|^2_{\text{tr}} \biggl)$ ;\\
3.  \>\> $(\mathbf{m}^n_{\mathbf{i}+\frac{1}{2}})^{k+1} = \bm{F}\big((\mathbf{m}^n_{i+\frac{1}{2}})^k+\tau \nabla_G (\Phi^n_{\mathbf{i}})^k,\, \tau/(\boldsymbol{\rho}^n_{\mathbf{i}})^{k+1} \big)$;\\ 
4.  \>\> $\begin{aligned}[t] (\Phi^n_{\mathbf{i}})^{k+1} = (\Phi^n_{\mathbf{i}})^k & + h (-\Delta_{t,G})^{-1}\Big( \partial_t \big(2(\boldsymbol{\rho}^n_{\mathbf{i}})^{k+1}-(\boldsymbol{\rho}^n_i)^k \big)\\ & + \text{div}_G \big(2(\mathbf{m}^n_{\mathbf{i}})^{k+1}-(\mathbf{m}^n_{\mathbf{i}})^k \big) \Big);\end{aligned}$\\         
5.  \> \textbf{end}\\
   \rule{\linewidth}{0.8pt}
\end{tabbing}

\subsection{Convergence}

Our proposed primal-dual algorithms for the tropical Wasserstein-1 and tropical Wasserstein-2 distances converge to their respective minimizers as given by Propositions \ref{prop:min1} and \ref{prop:min2}. 


\begin{theorem}
\begin{enumerate}[(i)]
\item Consider the G-Prox PDHG algorithm to compute the tropical Wasserstein-1 distance. Let 
$$
\sqrt{\tau\mu}\|(-\Delta_G)^{-\frac{1}{2}}\mathrm{div}_G\|_2<1.
$$ 
Then $(\mathbf{m}^{k}, \Phi^{k})$ defined by \eqref{iteration} converges weakly to $(\mathbf{m}^*,\Phi^*)$.\\

\item Consider the G-Prox PDHG algorithm to compute the tropical Wasserstein-2 distance. 
Let
$$
\sqrt{\tau\mu}\| (-\Delta_{t,G})^{-\frac{1}{2}}\mathrm{div}_{t,G}\|_2<1.
$$
Then $(\mathbf{m}^{k}, \boldsymbol{\rho}^k, \Phi^{k})$ defined by \eqref{iteration-WT2} converges weakly to $(\mathbf{m}^*,\boldsymbol{\rho}^*,\Phi^*)$.
\end{enumerate}
\end{theorem}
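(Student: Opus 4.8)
The plan is to recognize that iterations \eqref{iteration} and \eqref{iteration-WT2} are instances of the G-Prox PDHG scheme of \cite{JacobsLegerLiOsher2018_solvinga}, so that the claimed weak convergence follows from the general convergence theory for (preconditioned) Chambolle--Pock primal--dual algorithms \cite{pock1,pock2} once the structural hypotheses are checked. First I would cast the two saddle-point problems \eqref{saddle1} and \eqref{saddle} in the canonical form $\min_X\max_\Phi \langle KX,\Phi\rangle + g(X) - f^*(\Phi)$, corresponding to the primal problem $\min_X f(KX)+g(X)$: for $p=1$, take $X=\mathbf{m}$, $K=\mathrm{div}_G$, $g(X)=\|\mathbf{m}\|_{\mathrm{tr}}$, and $f$ the indicator of $\{z : z+q^1-q^0=0\}$ (so $f^*$ is the affine map $\Phi\mapsto\langle q^0-q^1,\Phi\rangle$); for $p=2$, take $X=(\mathbf{m},\boldsymbol{\rho})$, $K(\mathbf{m},\boldsymbol{\rho})=\partial_t\boldsymbol{\rho}+\mathrm{div}_G\mathbf{m}=:\mathrm{div}_{t,G}(\mathbf{m},\boldsymbol{\rho})$, $g(X)=F(\mathbf{m},\boldsymbol{\rho})$ as in \eqref{UOT1}, and $f$ the indicator enforcing the space-time continuity equation together with the endpoint data.

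The structural facts to verify are then: $g$ is proper, convex, and lower semicontinuous --- for $p=1$ this is immediate since $\|\cdot\|_{\mathrm{tr}}$ is a norm (convexity is Lemma \ref{lem:convex}), and for $p=2$ it is the joint convexity of $F$ noted after \eqref{UOT1} together with the lower semicontinuity encoded in the $+\infty$ branch; $f$ is proper, convex, lower semicontinuous as the indicator of an affine set; and $K$ is a bounded linear operator. Next I would identify the relevant operator norm in the G-Prox metric, i.e.\ with the $L^2$ norm on the primal update and the $H^1$ norm on the dual update. Writing $\|\Phi\|_{H^1}=\|(-\Delta_G)^{1/2}\Phi\|_2$ on the subspace on which $-\Delta_G$ is invertible (the compatibility condition $\sum_{\mathbf{i}}(q^1_{\mathbf{i}}-q^0_{\mathbf{i}})=0$ and the Neumann/zero-mean normalization make this precise) and substituting $\Psi=(-\Delta_G)^{1/2}\Phi$, the coupling norm $\sup\{\langle KX,\Phi\rangle : \|X\|_2\le 1,\ \|\Phi\|_{H^1}\le 1\}$ equals $\|(-\Delta_G)^{-1/2}\mathrm{div}_G\|_2$ in case (i), and analogously $\|(-\Delta_{t,G})^{-1/2}\mathrm{div}_{t,G}\|_2$ in case (ii). Hence the stated hypotheses $\sqrt{\tau\mu}\,\|(-\Delta_G)^{-1/2}\mathrm{div}_G\|_2<1$ and $\sqrt{\tau\mu}\,\|(-\Delta_{t,G})^{-1/2}\mathrm{div}_{t,G}\|_2<1$ are exactly the Chambolle--Pock step-size condition $\tau\mu\|K\|^2<1$ in these norms.

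With these pieces in place, I would invoke the convergence theorem of \cite{JacobsLegerLiOsher2018_solvinga}: under the step-size condition the primal--dual iterates converge weakly to a saddle point of $L$, and the saddle-point optimality system --- $\delta_{\mathbf{m}}L=0$, $\delta_\Phi L=0$ for $p=1$, and $\delta_{\mathbf{m}}L=0$, $\delta_{\boldsymbol{\rho}}L\ge 0$, $\delta_\Phi L=0$ for $p=2$ --- is precisely the characterization of the minimizers established in Propositions \ref{prop:min1} and \ref{prop:min2}. Thus the weak limits $(\mathbf{m}^*,\Phi^*)$ and $(\mathbf{m}^*,\boldsymbol{\rho}^*,\Phi^*)$ are the claimed optimizers.

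I expect the main obstacle to be case (ii): the Wasserstein-$2$ iteration \eqref{iteration-WT2} updates $\boldsymbol{\rho}$ and $\mathbf{m}$ sequentially (a Gauss--Seidel sweep) rather than through a single joint proximal step, so one must either argue that, since $F(\mathbf{m},\boldsymbol{\rho})$ is jointly convex and the two blocks are coupled to the dual variable only through $K$, this sweep still falls within the convergence framework used, or appeal to the corresponding multi-block extension. A secondary technical point is the boundary $\boldsymbol{\rho}=0$, where $F=+\infty$: one should check that the positive-root primal step in the algorithm keeps $\boldsymbol{\rho}^k$ strictly positive (or work on the closed feasible set and use lower semicontinuity of $F$), so that all proximal maps are well defined along the iteration. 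Case (i) is comparatively routine, the only care being the choice of space on which $-\Delta_G$ is invertible.
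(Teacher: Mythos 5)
Your proposal takes essentially the same route as the paper: cast \eqref{saddle1} and \eqref{saddle} in the canonical form $g(X)+\Phi^\T K X-f^*(\Phi)$, check convexity of $g$, $f^*$ and linearity of $K$, identify the G-Prox preconditioners ($T=\tau\,\mathrm{Id}$ on the primal, $\Sigma=\mu(-\Delta_G)^{-1}$ or $\mu(-\Delta_{t,G})^{-1}$ on the dual) so that the stated hypotheses become the Chambolle--Pock condition $\|\Sigma^{1/2}KT^{1/2}\|_2^2<1$, and then invoke Theorem 1 of \cite{pock2}, with the limit identified via the saddle-point systems of Propositions \ref{prop:min1} and \ref{prop:min2}. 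In fact you are somewhat more careful than the paper, which disposes of case (ii) with ``a similar argument holds,'' whereas you flag the sequential $\boldsymbol{\rho}$/$\mathbf{m}$ update and the $\boldsymbol{\rho}=0$ boundary as points needing justification.
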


\begin{proof}
The proof follows that of Theorem 1 in \cite{pock2}. We justify the conditions in \cite{pock2}. In the case of (i), we write the Lagrangian $L$ as 
\begin{equation*}
L(\mathbf{m}, \Phi)=g(\mathbf{m})+\Phi^\T K \mathbf{m}-f^*(\Phi),
\end{equation*}
where $g(\mathbf{m})=\|\mathbf{m}\|_{\mathrm{tr}}$, $K=\textrm{div}_G$, and $f^*(\Phi)=\sum_{\mathbf{i}}\Phi_{\mathbf{i}}(q_{\mathbf{i}}^0-q_{\mathbf{i}}^1)$.
Observe that $g$, $f^*$ are convex functions and $K$ is a linear operator. Then there exists a saddle point $(\mathbf{m}^*,\Phi^*)$.
Notice that the preconditioning norm for $\Phi$ is $\Sigma:=\mu (-\Delta_G)^{-1}$ and the preconditioning norm for $\mathbf{m}$ is $T:=\tau \cdot \mathrm{Id}$ where $\mathrm{Id}$ is an identity operator.
Thus, the algorithm converges when $\| \Sigma^{\frac{1}{2}}KT^{\frac{1}{2}}\|_2^2<1$. This is our condition $\sqrt{\tau\mu}\|(-\Delta_G)^{-\frac{1}{2}}\mathrm{div}_G\|_2<1$, which finishes the proof. A similar argument holds for (ii). 
\end{proof}



\section{Numerical Experiments}
\label{sec:num_exp}
In this section, we present the results of numerical experiments solving the tropical optimal transport problem for three different sets of initial densities using our proposed G-Prox primal-dual methods for $L^1$ and $L^2$. In particular, we give the minimizers of $L^1$ and $L^2$ tropical optimal transport problems from each experiment.

\paragraph{Experiment 1.}
We consider a two-dimensional problem on $\Omega =  [0,1]\times[0,1]$. The initial densities $\rho_0$ and $\rho_1$ are same sizes of squares centered at $(\frac{1}{3},\frac{1}{3})$ and $(\frac{2}{3},\frac{2}{3})$, respectively. In this experiment, the parameters are
\begin{align*}
N_x &=128\times128,\\
N_t &=15.
\end{align*}
Figure \ref{fig:exp-l1-dp} shows the minimizer $m(x)$ of the tropical Wasserstein-1 distance and Figure \ref{fig:exp-l2-dp} shows the minimizer $\rho(t,x)$ of the tropical Wasserstein-2 distance.

\paragraph{Experiment 2.}
Similar to Experiment 1, we consider a two dimensional problem on $\Omega =  [0,1]\times[0,1]$. The initial densities $\rho_0$ and $\rho_1$ are same sizes of squares centered at $(\frac{1}{3},\frac{2}{3})$ and $(\frac{2}{3},\frac{1}{3})$ respectively. The same parameters are set as in Experiment 1. Together with Experiment 1, Experiment 2 shows that the minimizers of tropical optimal transport show different geodesics depending on the positions of initial densities. See Figure \ref{fig:exp-l1-dn} for $L^1$ result and Figure \ref{fig:exp-l2-dn} for $L^2$ result.

\paragraph{Experiment 3.}
We again consider a two dimensional problem on $\Omega =  [0,1]\times[0,1]$. The initial density $\rho_0$ at time 0 is a square centered at $(0.5,0.5)$ with width $0.2$. The initial density $\rho_1$ at time 1 is four squares of the same size centered at $(0.2,0.2)$, $(0.2,0.8)$, $(0.8,0.2)$ and $(0.8,0.8)$ with width $0.1$. The same parameters are set as in Experiment 1. See Figure \ref{fig:exp-l1-c4} for the $L^1$ result and Figure \ref{fig:exp-l2-c4} for the $L^2$ result; notice that the geodesics of minimizers from both results depend on the direction in which the densities travel. We see that Experiment 3 coincides with Experiments 1 and 2.

\paragraph{Software.}

Software to implement the numerical experiments presented in this paper is publicly available and located on the TropicalOT GitHub repository at \url{https://github.com/antheamonod/TropicalOT}.


\section{Discussion}
\label{sec:discussion}

In this paper, we connected optimal transport theory---specifically, dynamic optimal transport---with tropical geometry. In particular, we explicitly formulated geodesics for the tropical Wasserstein-$p$ distances over the tropical projective torus.  The tropical projective torus is the ambient space of the polyhedral Gr\"{o}bner complex of a homogeneous ideal in a polynomial ring $K[x_0, x_1, \ldots, x_n]$ over a field $K$---a foundational object in tropical geometry.  It is also the ambient space of the space of phylogenetic trees. 

We constructed and implemented primal-dual algorithms to compute tropical Wasserstein-1 and 2 geodesics on the tropical projective torus.  These results provide a framework to identifying all infinitely-many geodesic paths between points in this space, which leads to a better understanding of paths on the ambient space containing important structures in tropical geometry theory as well as in practice and applications.  In addition, the Wasserstein-2 distance possesses an important structure for statistical inference, since it provides the form for Fr\'{e}chet means on the tropical projective torus, as well as a general inner product structure.

Our research lays the foundation for further connections between optimal transport and tropical geometry.  Our work provides powerful tools to study important aspects such as geometry and statistics on the tropical projective torus.  A current work in progress is to characterize and solve the optimal transport problem on the subset of the tropical projective torus corresponding to phylogenetic tree space with 5 leaves, $\mathcal{T}_5$.  This space is made up of a union of 5!! = 15 polyhedral cones in the tropical projective torus, each with dimension 2.  In this study, the main challenge involves the polyhedral structure of the tree space (as discussed in Section \ref{subsec:trop_proj}), and in particular, how to handle the intersections of the cones; a weaker form of the divergence and gradient operators are required to traverse the cones.  The present work solves the problem within a single cone, which defines a shrink operator with already six cases, see Table \ref{tab:shrink_ex2}; we also expect the characterization of the shrink operator to be combinatorially more complicated on all 15 cones of $\mathcal{T}_5$.

From the perspective of optimal transport, we observe that the combinatorial structure of the tropical metric poses several interesting challenges in optimal transport.  For example, the partial differential equations derived in Section \ref{sec:opt} are defined in a piecewise manner: in two-dimensional sample space, there are six corresponding equations characterizing geodesics in optimal transport.  In the general case, there are interesting regularity issues to be further studied.  The theory of optimal transport and the study of associated density manifolds provide a natural base to construct heat equations with respect to the tropical metric.  This provides an important potential to defining non-uniform probability distributions on the tropical projective torus: classically, the solution to the heat equation gives rise to the Gaussian distribution, thus, a solution to the tropical heat equation is a candidate for a tropical Gaussian distribution on the tropical projective torus \citep{tran2018tropical, maazouz2019statistics}.  The dynamic setting of optimal transport with the tropical ground metric introduced in this paper also provides a foundation to studying the displacement convexity and Ricci curvature tensor on the tropical projective torus.  In forthcoming work, we further study such questions by applying the relevant work of \cite{Li2018_geometrya, Li2019_diffusion}, which also studies geometric and probabilistic questions in the context of optimal transport theory.  

\vfill\eject


\section*{Acknowledgments}

The authors wish to thank Marzieh Eidi, Th\'eo Lacombe, Victor Panaretos, Ronen Talmon, and Yoav Zemel for helpful discussions, with special thanks extended to Emil Saucan.  A.M.~wishes to acknowledge the Max Planck Institute for Mathematics in the Sciences for hosting her visit in Leipzig in July 2018, which inspired this work.  




\newpage
\section*{Figures: Numerical Experiments}

\begin{figure}[ht]
\begin{minipage}{0.32\linewidth}
\includegraphics[width=1\linewidth]{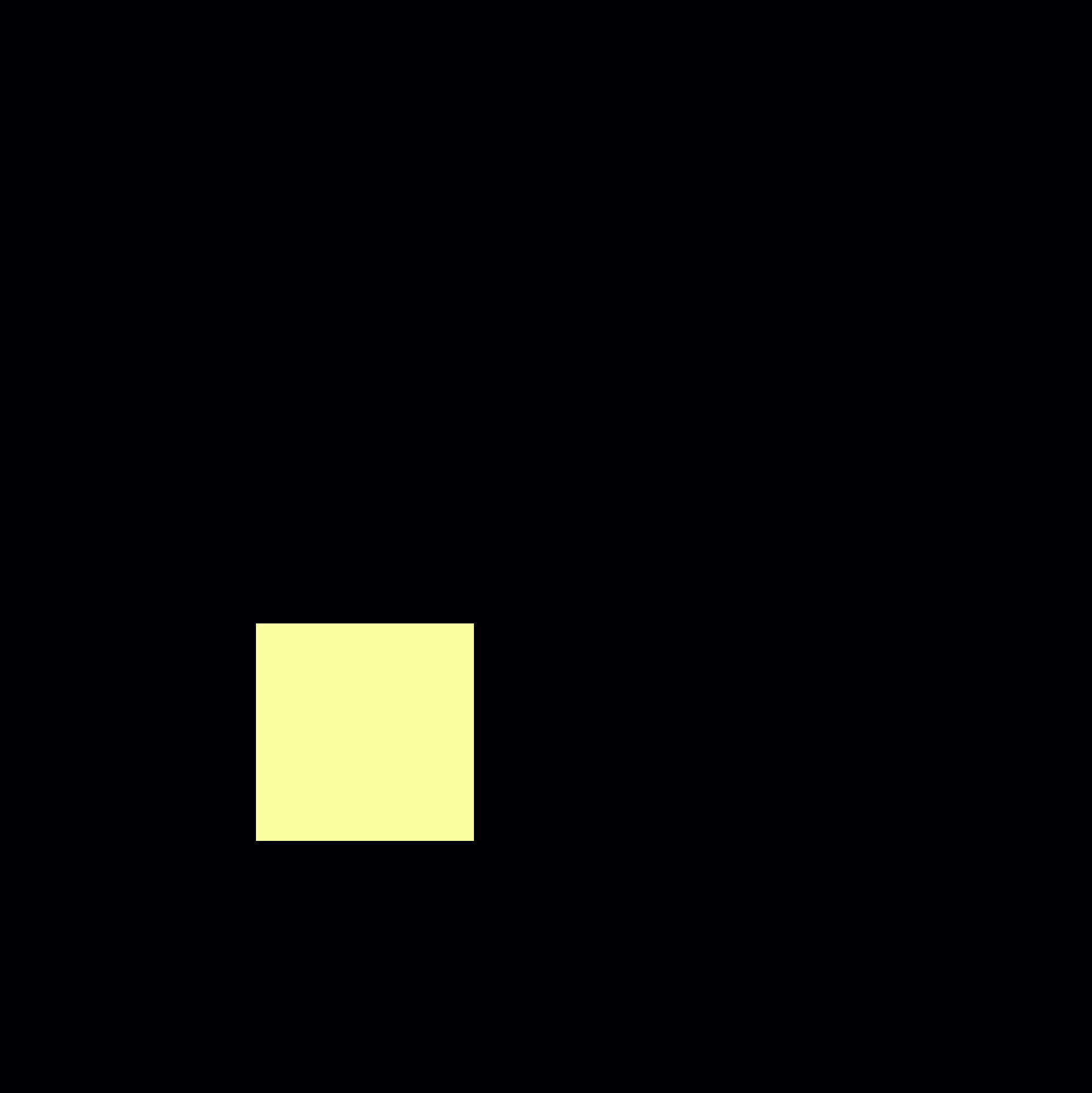}
\caption*{(a) $\rho_0$}
\end{minipage}\hfill
\begin{minipage}{0.32\linewidth}
\includegraphics[width=1\linewidth]{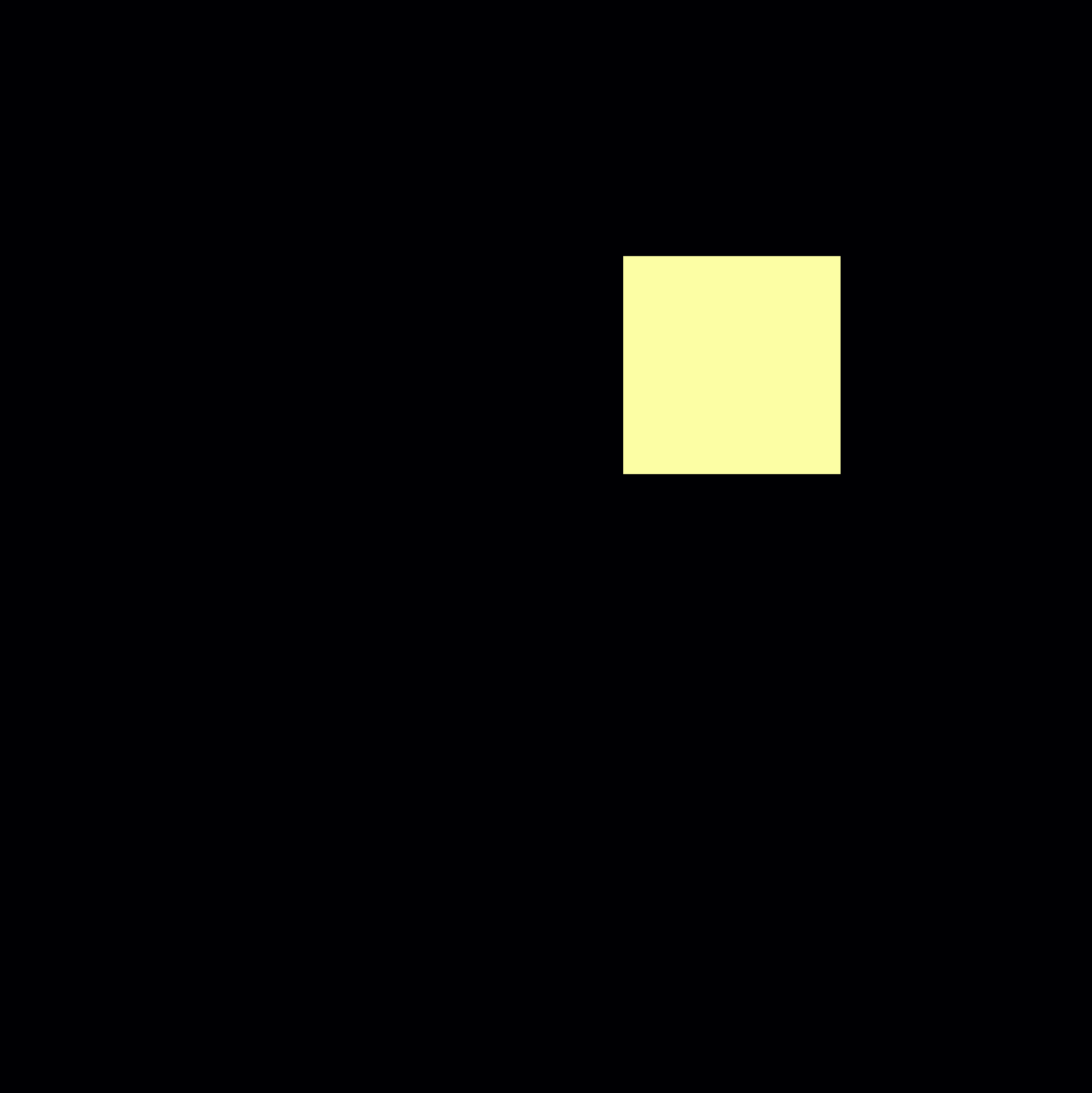}
\caption*{(b) $\rho_1$}
\end{minipage}\hfill
\begin{minipage}{0.32\linewidth}
\includegraphics[width=1\linewidth]{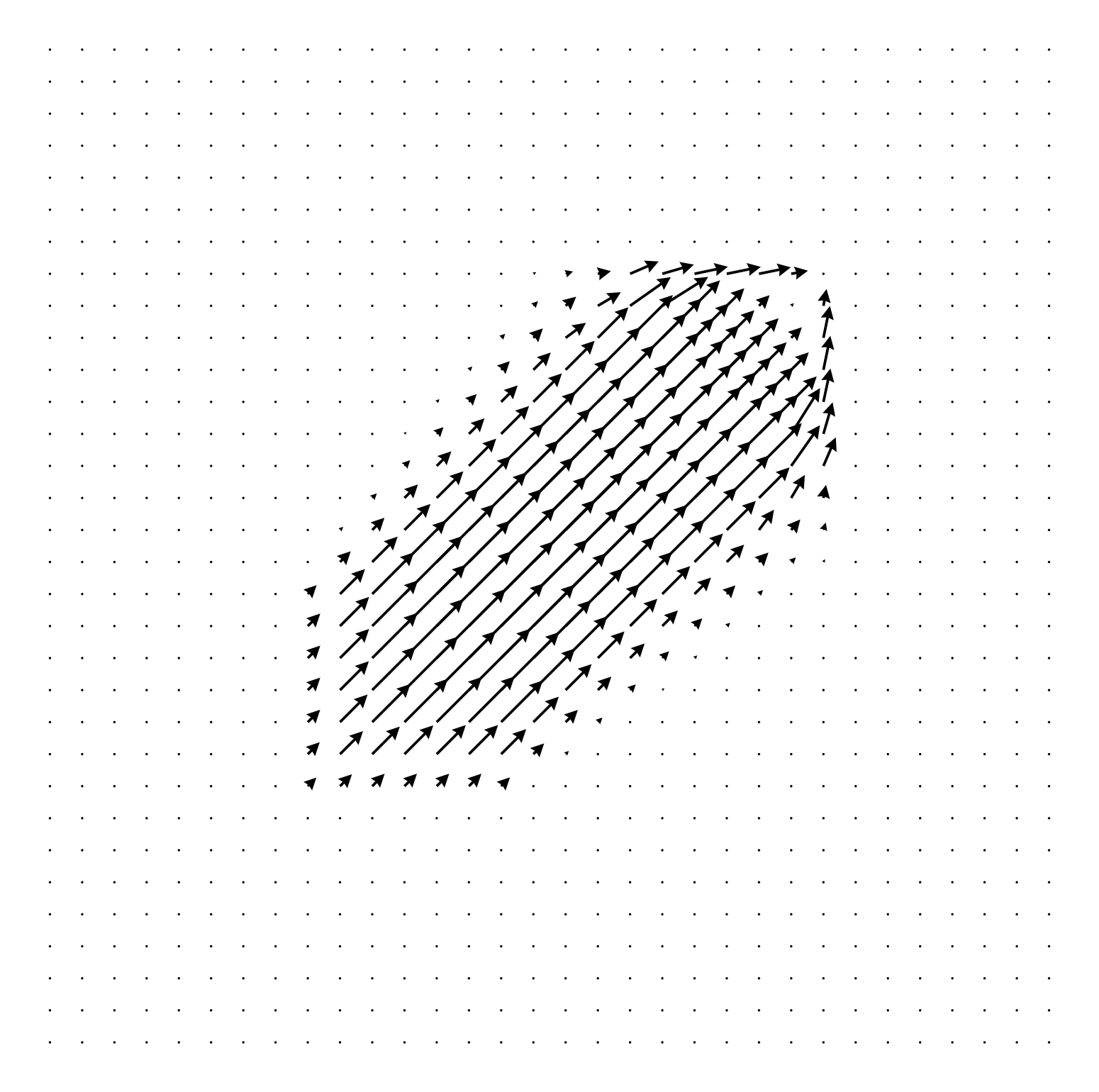}
\caption*{(c) $\mathbf{m}$}
\end{minipage}\hfill
\caption{Experiment 1: $L^1$ tropical optimal transport. (a) and (b) show the initial densities $\rho_0$ and $\rho_1$, while (c) shows the geodesics of the $L^1$ tropical optimal transport between $\rho_0$ and $\rho_1$. }
\label{fig:exp-l1-dp}
\end{figure}

\begin{figure}[ht]
\begin{minipage}{0.32\linewidth}
\includegraphics[width=1\linewidth]{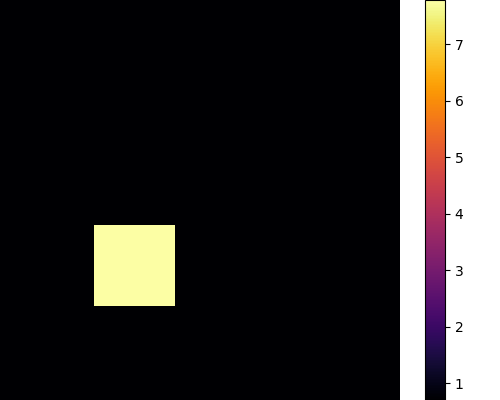}
\vspace*{-0.6cm}
\caption*{$t=0$}
\end{minipage}\hfill
\begin{minipage}{0.32\linewidth}
\includegraphics[width=1\linewidth]{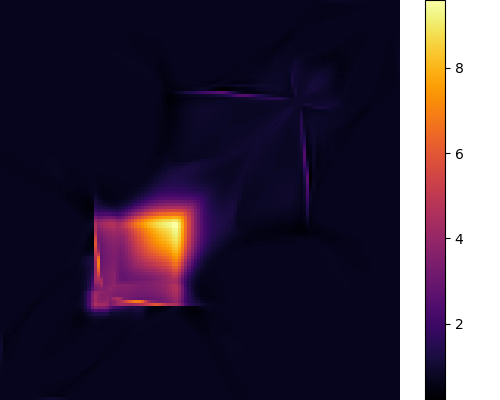}
\vspace*{-0.6cm}
\caption*{$t=0.21$}
\end{minipage}\hfill
\begin{minipage}{0.32\linewidth}
\includegraphics[width=1\linewidth]{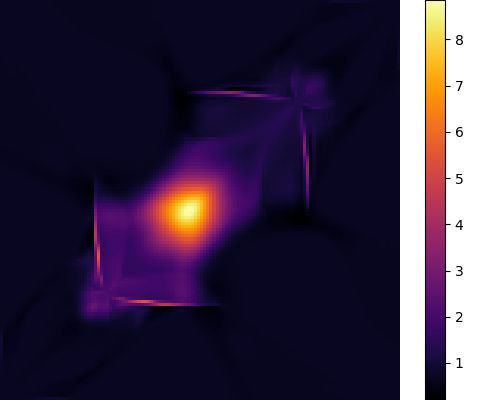}
\vspace*{-0.6cm}
\caption*{$t=0.42$}
\end{minipage}\hfill
\begin{minipage}{0.32\linewidth}
\includegraphics[width=1\linewidth]{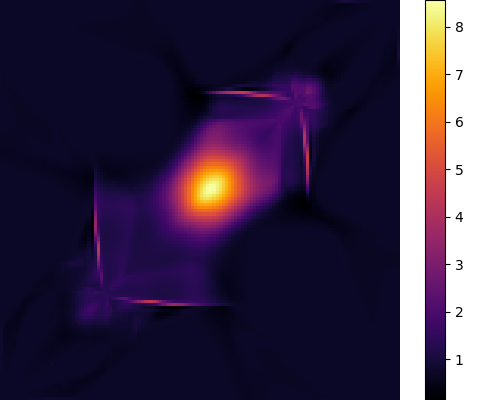}
\vspace*{-0.6cm}
\caption*{$t=0.64$}
\end{minipage}\hfill
\begin{minipage}{0.32\linewidth}
\includegraphics[width=1\linewidth]{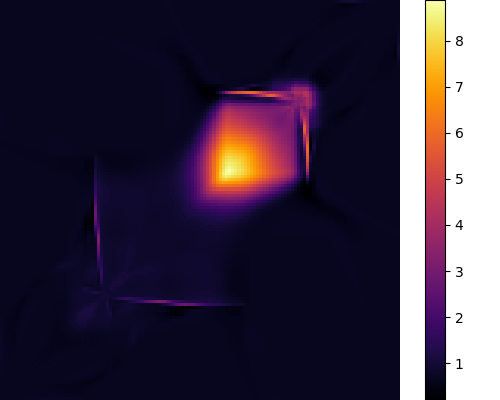}
\vspace*{-0.6cm}
\caption*{$t=0.86$}
\end{minipage}\hfill
\begin{minipage}{0.32\linewidth}
\includegraphics[width=1\linewidth]{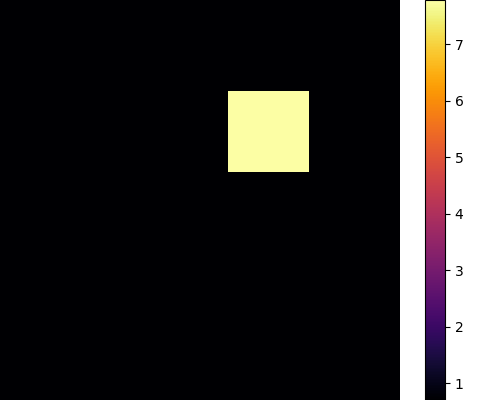}
\vspace*{-0.6cm}
\caption*{$t=1$}
\end{minipage}\hfill
\caption{Experiment 1: $L^2$ tropical optimal transport. The six figures show the geodesics of $L^2$ tropical optimal transport from $t=0$ to $t=1$. The initial densities are same as in Figure \ref{fig:exp-l1-dp}.}
\label{fig:exp-l2-dp}
\end{figure}

\begin{figure}[ht]
\begin{minipage}{0.32\linewidth}
\includegraphics[width=1\linewidth]{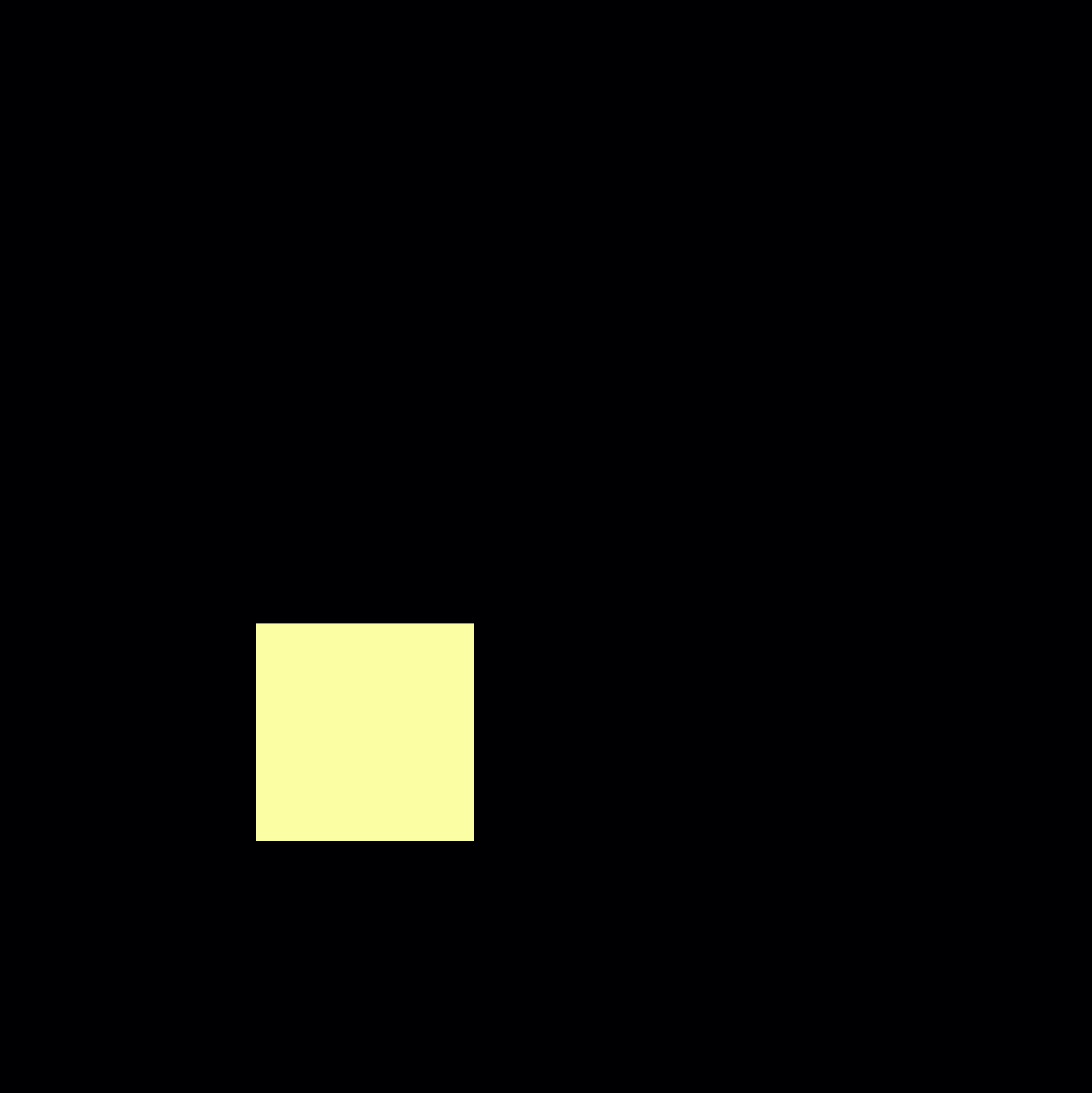}
\caption*{(a) $\rho_0$}
\end{minipage}\hfill
\begin{minipage}{0.32\linewidth}
\includegraphics[width=1\linewidth]{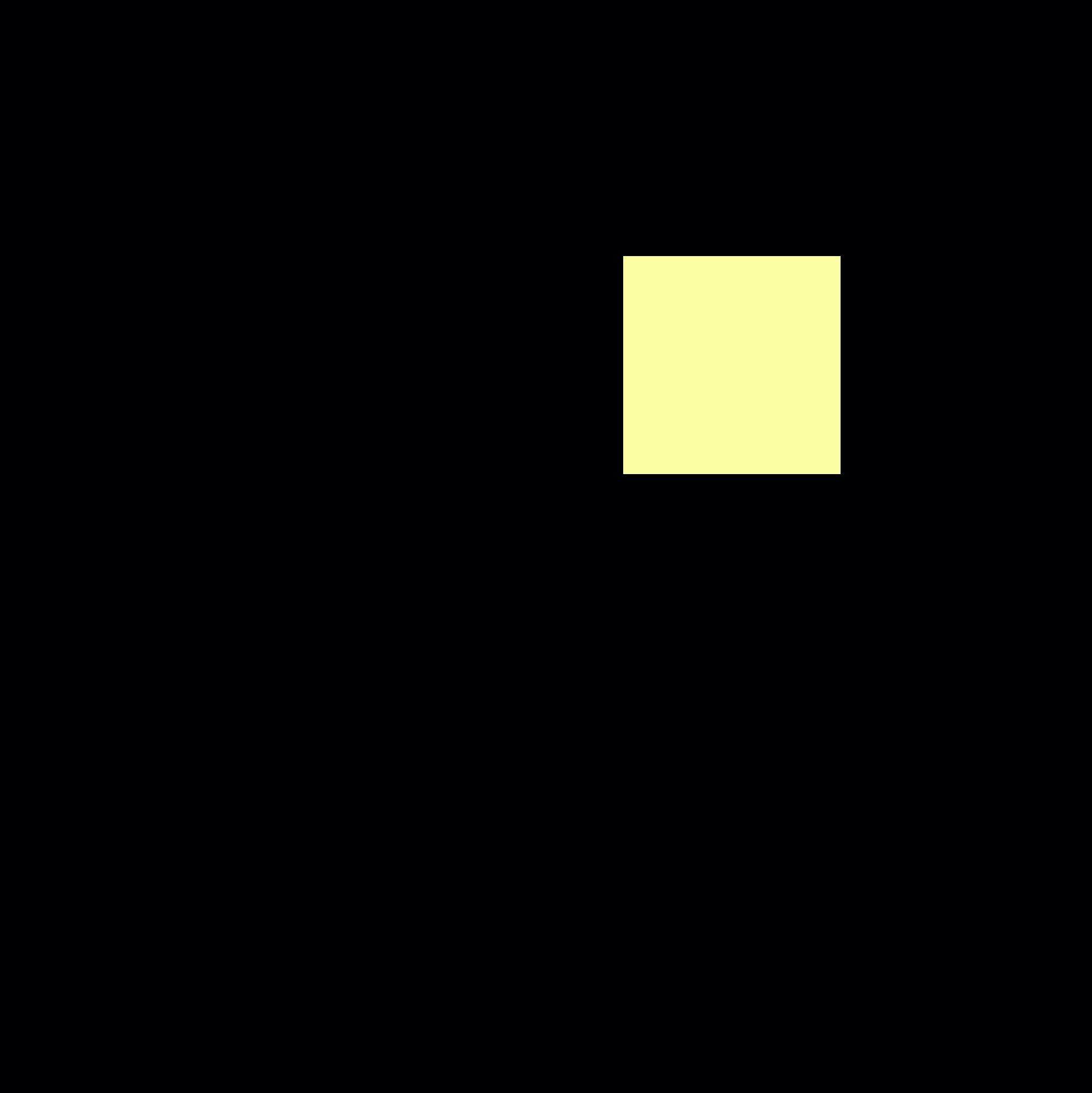}
\caption*{(b) $\rho_1$}
\end{minipage}\hfill
\begin{minipage}{0.32\linewidth}
\includegraphics[width=1\linewidth]{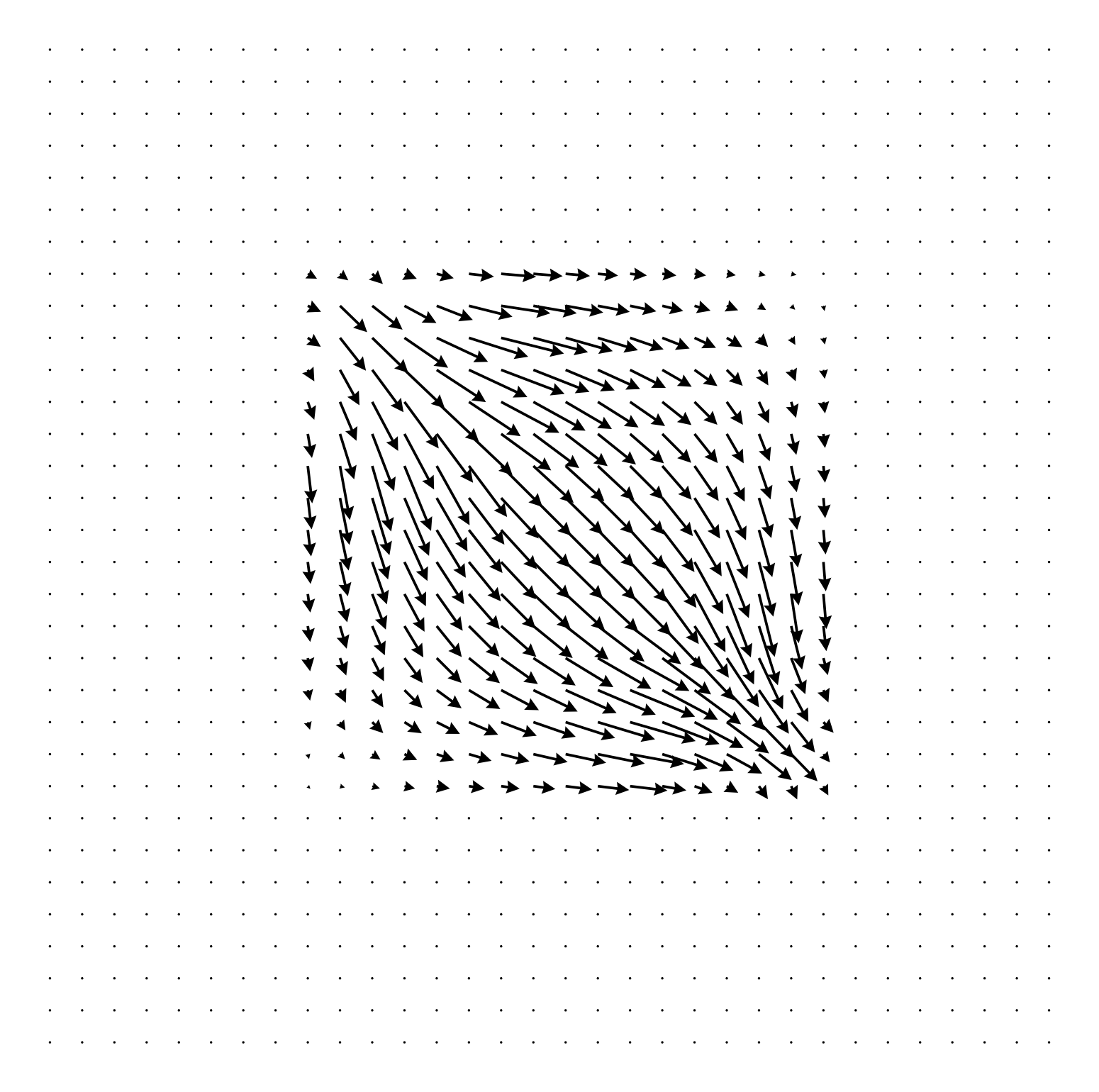}
\caption*{(c) $\mathbf{m}$}
\end{minipage}\hfill
\caption{Experiment 2: $L^1$ tropical optimal transportation. (a) and (b) show the initial densities $\rho_0$ and $\rho_1$. (c) shows the geodesics of the $L^1$ tropical optimal transportation between $\rho_0$ and $\rho_1$. }
\label{fig:exp-l1-dn}
\end{figure}

\begin{figure}[ht]
\begin{minipage}{0.32\linewidth}
\includegraphics[width=1\linewidth]{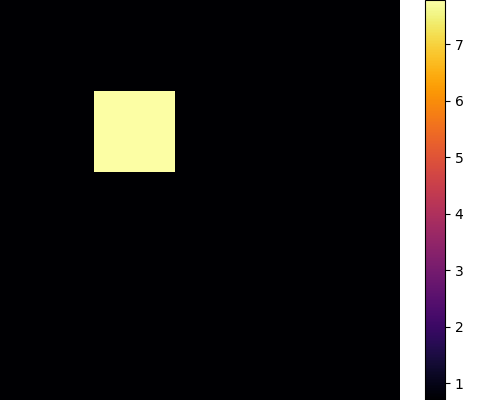}
\vspace*{-0.6cm}
\caption*{$t=0$}
\end{minipage}\hfill
\begin{minipage}{0.32\linewidth}
\includegraphics[width=1\linewidth]{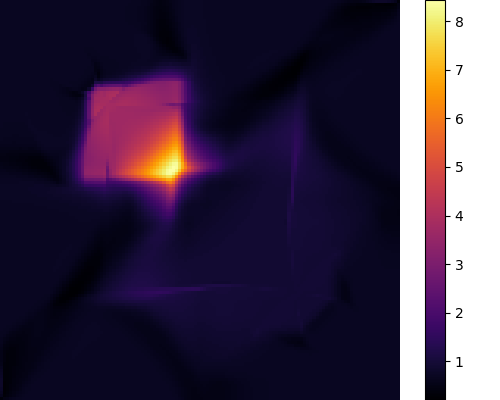}
\vspace*{-0.6cm}
\caption*{$t=0.21$}
\end{minipage}\hfill
\begin{minipage}{0.32\linewidth}
\includegraphics[width=1\linewidth]{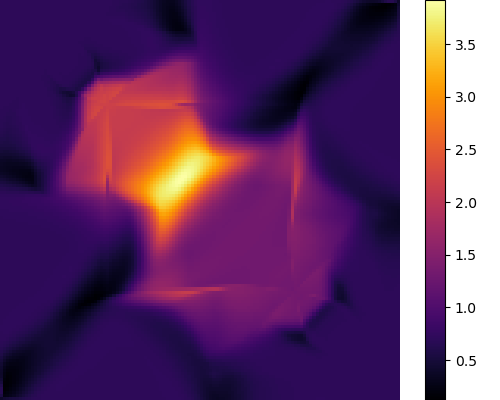}
\vspace*{-0.6cm}
\caption*{$t=0.42$}
\end{minipage}\hfill
\begin{minipage}{0.32\linewidth}
\includegraphics[width=1\linewidth]{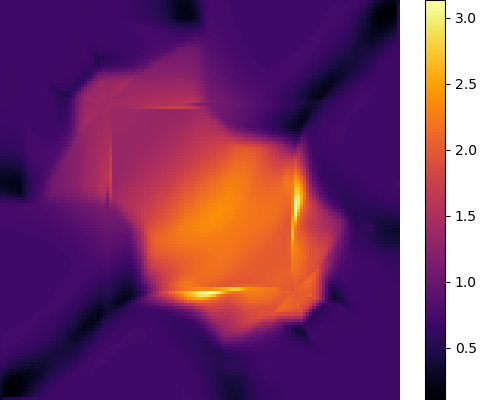}
\vspace*{-0.6cm}
\caption*{$t=0.64$}
\end{minipage}\hfill
\begin{minipage}{0.32\linewidth}
\includegraphics[width=1\linewidth]{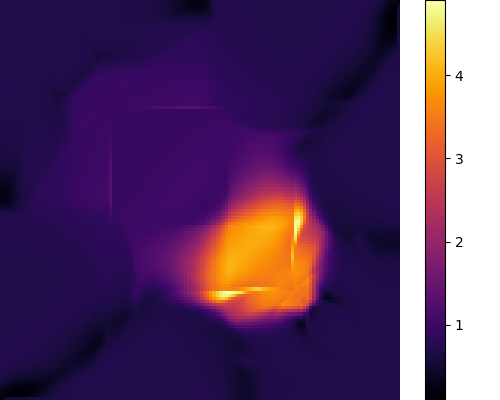}
\vspace*{-0.6cm}
\caption*{$t=0.86$}
\end{minipage}\hfill
\begin{minipage}{0.32\linewidth}
\includegraphics[width=1\linewidth]{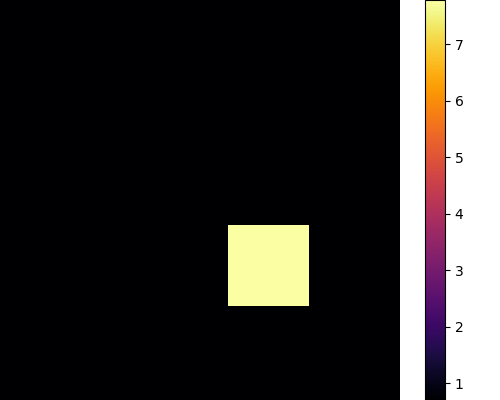}
\vspace*{-0.6cm}
\caption*{$t=1$}
\end{minipage}\hfill
\caption{Experiment 2: $L^2$ tropical optimal transport. The figures show the geodesics of $L^2$ tropical optimal transport between two initial densities from $t=0$ to $t=1$. The initial densities are same as in Figure \ref{fig:exp-l1-dn}.}
\label{fig:exp-l2-dn}
\end{figure}

\begin{figure}[ht]
\begin{minipage}{0.32\linewidth}
\includegraphics[width=1\linewidth]{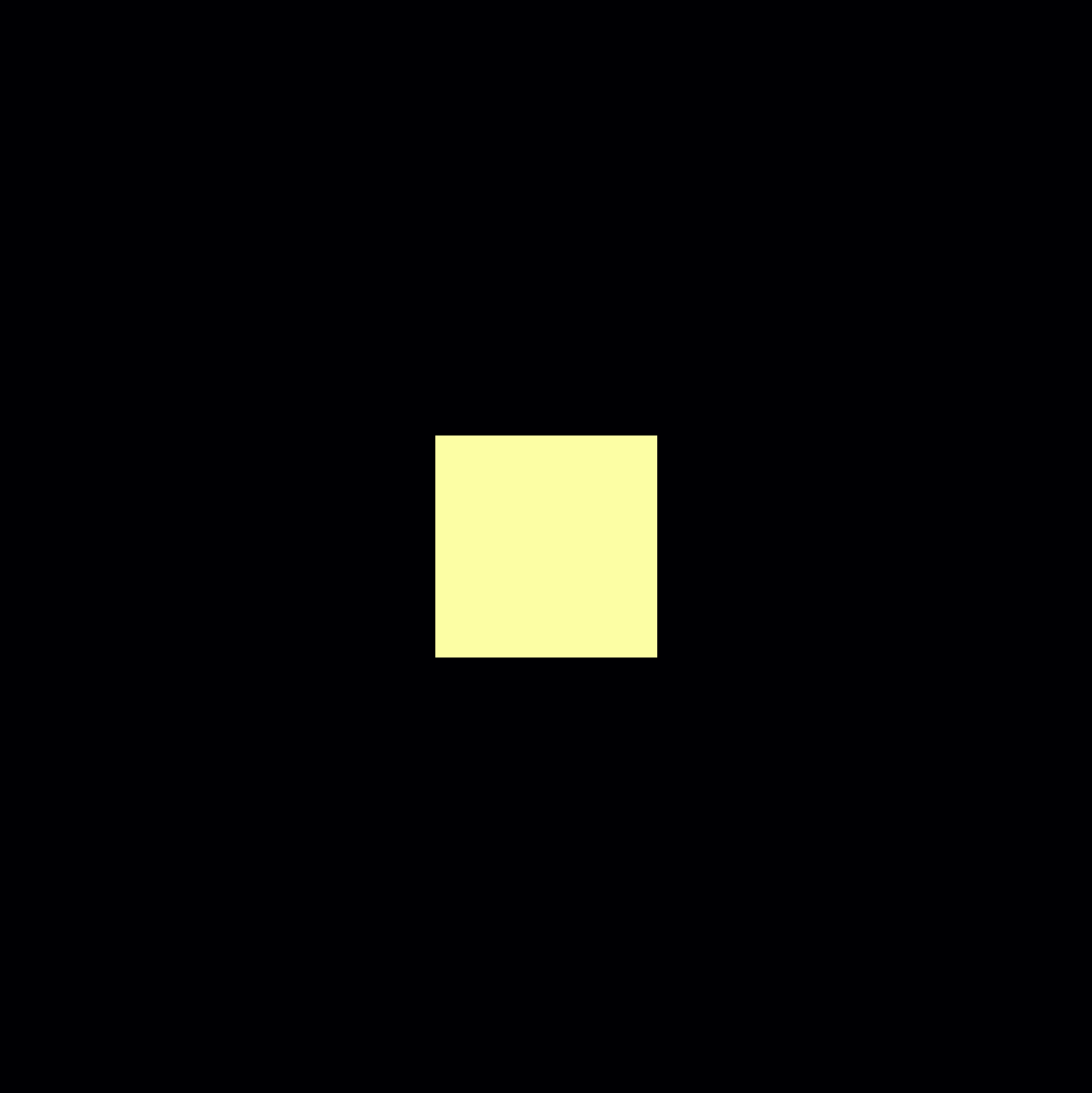}
\caption*{(a) $\rho_0$}
\end{minipage}\hfill
\begin{minipage}{0.32\linewidth}
\includegraphics[width=1\linewidth]{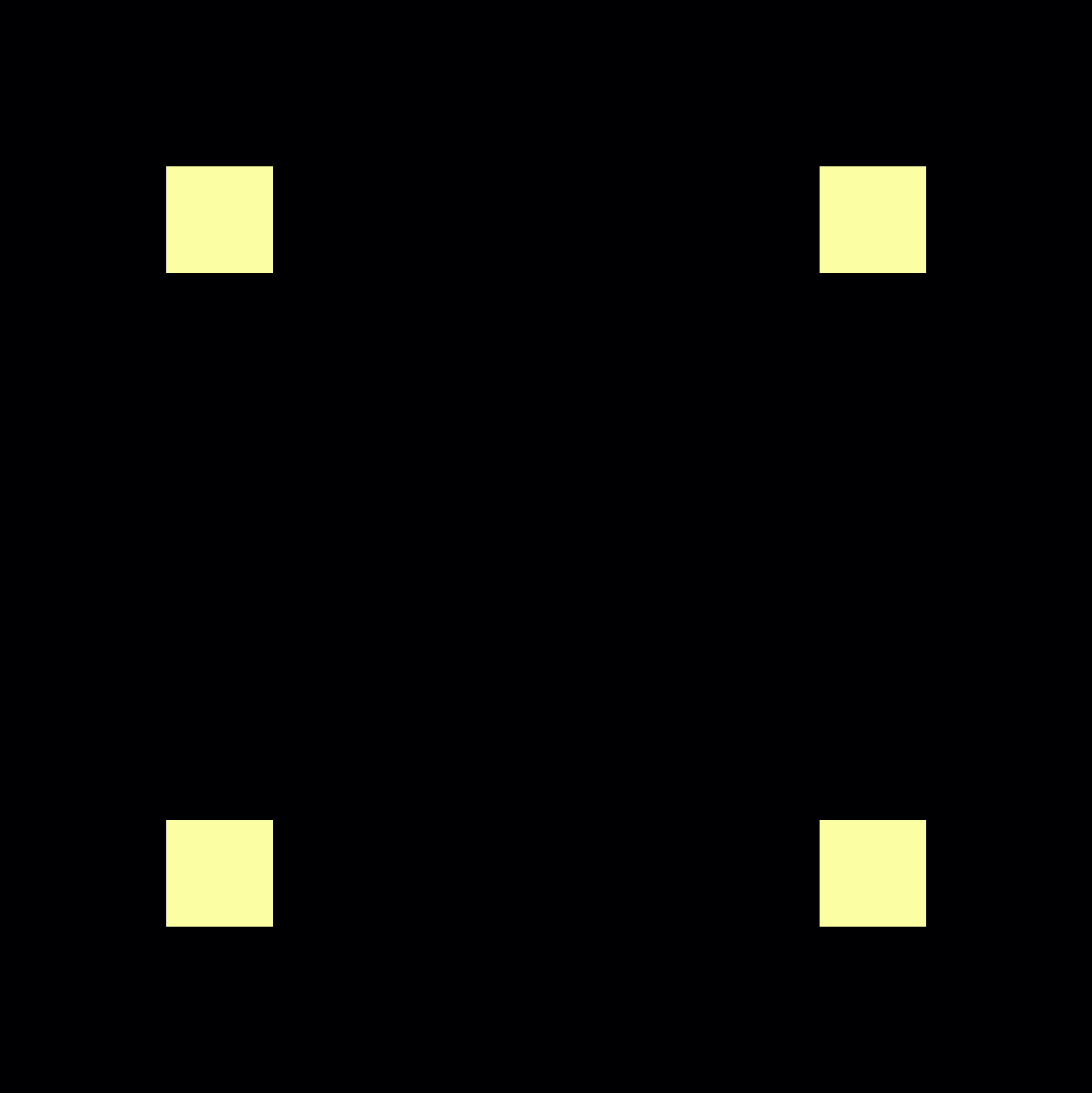}
\caption*{(b) $\rho_1$}
\end{minipage}\hfill
\begin{minipage}{0.32\linewidth}
\includegraphics[width=1\linewidth]{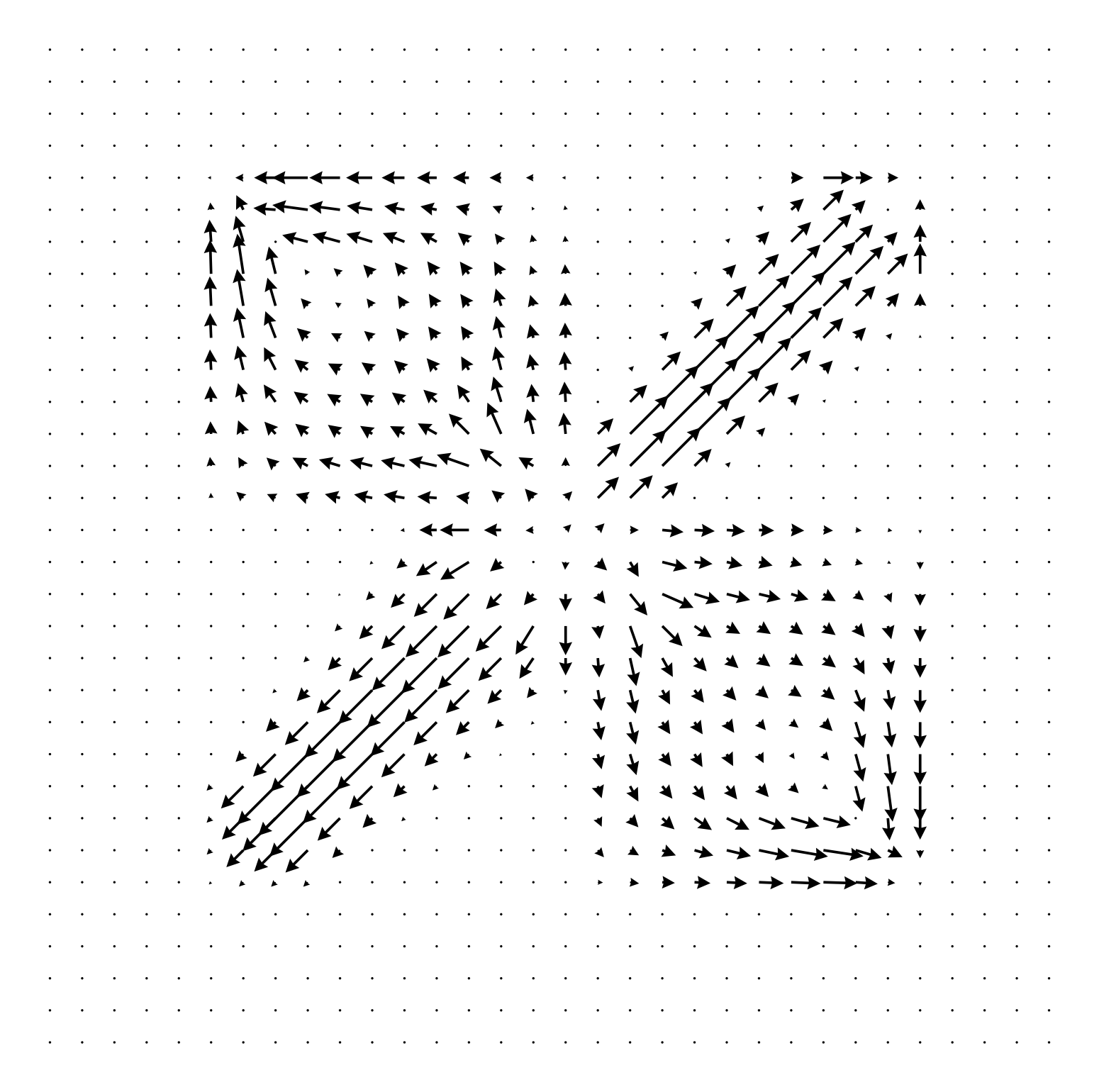}
\caption*{(c) $\mathbf{m}$}
\end{minipage}\hfill
\caption{Experiment 3: $L^1$ tropical optimal transport. (a) and (b) show the initial densities $\rho_0$ and $\rho_1$, while (c) shows the geodesics of the $L^1$ tropical optimal transport between $\rho_0$ and $\rho_1$. This experiment shows similar patterns of geodesics from Experiment 1 and Experiment 2.}
\label{fig:exp-l1-c4}
\end{figure}

\begin{figure}[ht]
\begin{minipage}{0.32\linewidth}
\includegraphics[width=1\linewidth]{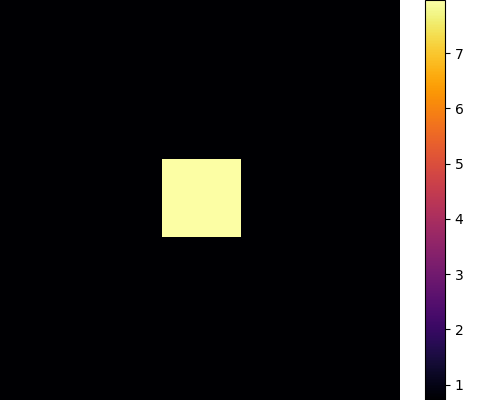}
\vspace*{-0.6cm}
\caption*{$t=0$}
\end{minipage}\hfill
\begin{minipage}{0.32\linewidth}
\includegraphics[width=1\linewidth]{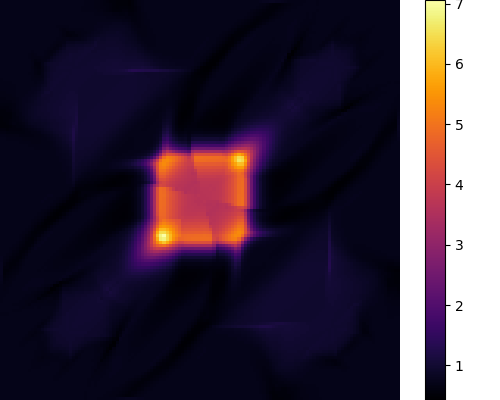}
\vspace*{-0.6cm}
\caption*{$t=0.21$}
\end{minipage}\hfill
\begin{minipage}{0.32\linewidth}
\includegraphics[width=1\linewidth]{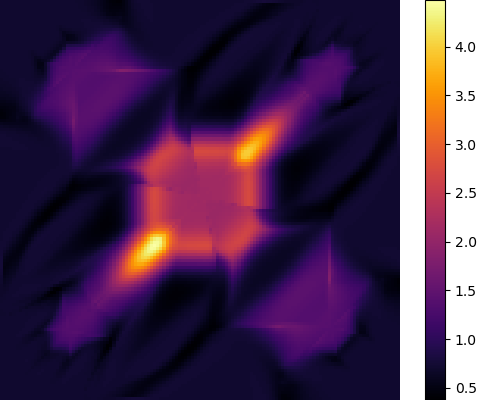}
\vspace*{-0.6cm}
\caption*{$t=0.42$}
\end{minipage}\hfill
\begin{minipage}{0.32\linewidth}
\includegraphics[width=1\linewidth]{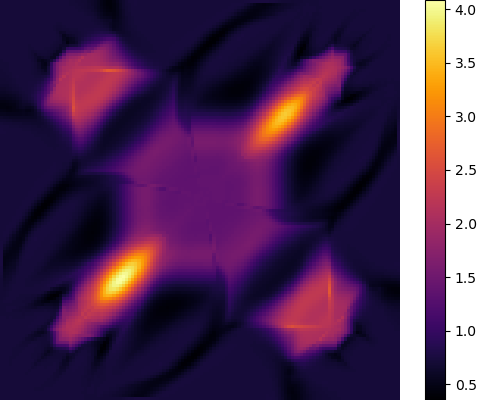}
\vspace*{-0.6cm}
\caption*{$t=0.64$}
\end{minipage}\hfill
\begin{minipage}{0.32\linewidth}
\includegraphics[width=1\linewidth]{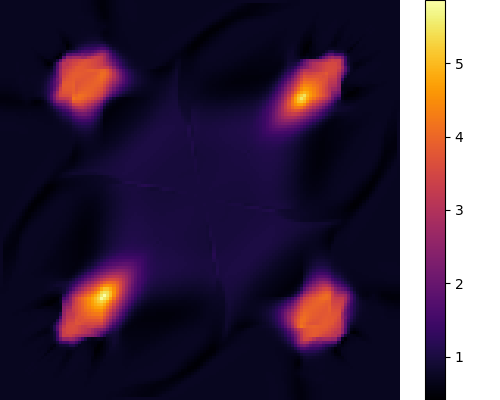}
\vspace*{-0.6cm}
\caption*{$t=0.86$}
\end{minipage}\hfill
\begin{minipage}{0.32\linewidth}
\includegraphics[width=1\linewidth]{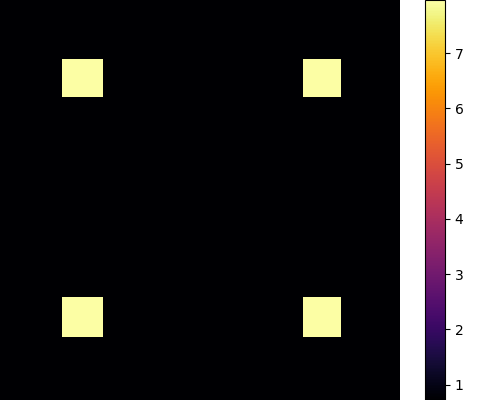}
\vspace*{-0.6cm}
\caption*{$t=1$}
\end{minipage}\hfill
\caption{Experiment 3: $L^2$ tropical optimal transport. The six figures show the geodesics of $L^2$ tropical optimal transportation from $t=0$ to $t=1$. The initial densities are same as in Figure \ref{fig:exp-l1-c4}.}
\label{fig:exp-l2-c4}
\end{figure}




\clearpage
\newpage
\bibliography{Tropical_OT_ref} 
\bibliographystyle{chicago}  


\end{document}